\author{A.A. Vasil'eva\footnote{E-mail address: vasilyeva\_nastya@inbox.ru; telephone number: +7 495 939 56 32.}}
\title{Kolmogorov and linear widths of the weighted Besov classes
with singularity at the origin}
\date{}
\begin{document}

\maketitle

\newenvironment{Biblio}{%
                  \renewcommand{\refname}{\footnotesize REFERENCES}%
                  }

\renewcommand{\le}{\leqslant}
\renewcommand{\ge}{\geqslant}
\renewcommand{\tilde}{\widetilde}
\newcommand{\sgn}{\mathrm {sgn}\,}
\newcommand{\dist}{\mathrm {dist}}
\newcommand{\R}{\mathbb{R}}
\renewcommand{\C}{\mathbb{C}}
\newcommand{\Z}{\mathbb{Z}}
\newcommand{\N}{\mathbb{N}}
\newcommand{\Q}{\mathbb{Q}}
\theoremstyle{plain}
\newtheorem{Trm}{Theorem}
\newtheorem{trma}{Theorem}

\newtheorem{Def}{Definition}
\newtheorem{Cor}{Corollary}
\newtheorem{Lem}{Lemma}
\newtheorem{Rem}{Remark}
\newtheorem{Sta}{Proposition}
\renewcommand{\proofname}{\bf Proof}
\renewcommand{\thetrma}{\Alph{trma}}
\renewcommand{\abstractname}{Abstract}
\begin{abstract}
In this paper we obtain asymptotic estimates of Kolmogorov
and linear widths of the weighted Besov classes with singularity
at the origin. In addition, estimates of Kolmogorov and linear
widths of finite-dimensional balls in a mixed norm are obtained.
\end{abstract}

\section{Introduction}
The aim of this paper is to obtain asymptotic behavior of
the Kolmogorov and linear widths of embeddings of the weighted Besov spaces with
weight functions having a strong singularity at the origin.

Denote $\N$, $\Z$, $\Z_+$, $\R$, $\R_+$ the sets of natural,
integer, nonnegative integer, real and nonnegative real numbers,
respectively.

For $1<p<\infty$ put $p'=\frac{p}{p-1}$.
\begin{Def}
Let $w:\R^d\rightarrow (0, \, +\infty)$ be a locally integrable
function, and $1<p<\infty$. Then $w$ belongs to the Muckenhoupt
class ${\cal A}_p$, if there exists a constant $A>0$ such that for
all balls $B\subset \R^d$ the following inequality holds:
\begin{align}
\label{mukenh_cond} \left(\frac{1}{|B|}\int \limits _B w(x)\,
dx\right)^{1/p} \left(\frac{1}{|B|}\int \limits _B w^{-p'/p}(x)\,
dx\right)^{1/p'}\le A.
\end{align}
\end{Def}
Denote ${\cal A}_\infty =\cup _{p>1}{\cal A}_p$.

Let $w:\R^d\rightarrow (0, \, +\infty)$, $0<p<+\infty$, and let
$f:\R^d\rightarrow \R$ be a measurable function. Denote
$$
\| f\| _{L_p(w)}=\left(\int \limits _{\R^d}|f(x)|^pw(x)\,
dx\right)^{1/p},
$$
$\| f\| _{L_\infty(w)}=\| f\| _{L_\infty}$.

We denote by ${\cal S}(\R^d)$ and ${\cal S}'(\R^d)$ the Schwartz
space and its dual, respectively. For $f\in {\cal S}'(\R^d)$
denote ${\cal F}(f)$ the Fourier transform of $f$.

Let $\varphi\in {\cal S}(\R^d)$ be such that
$$
{\rm supp}\, \varphi \subset \{y\in \R^d:|y|<2\}, \;\;\;
\varphi(x)=1 \text{ for }|x|\le 1,
$$
$\varphi _0=\varphi$, $\varphi _j(x)=\varphi(2^{-j}x)
-\varphi(2^{-j+1}x)$, $j\in \N$.
\begin{Def}
Let $0<p\le \infty$, $0<q\le \infty$, $w\in {\cal A}_\infty$. The
weighted Besov space $B^s_{p,q}(\R^d, \, w)$ is the set of all
distributions $f\in {\cal S}'(\R^d)$ such that
$$
\| f\| _{B^s_{p,q}(\R^d, \, w)}:=\left(\sum \limits _{j=0}^\infty
2^{jsq}\|{\cal F}^{-1}(\varphi _j{\cal F}f)\|
^q_{L_p(w)}\right)^{1/q}
$$
is finite. In the case $q=\infty$ the usual modification is
required.
\end{Def}
The space $B^s_{p,q}(\R^d, \, w)$ is quasi-Banach, and if $p\ge 1$
and $q\ge 1$, then it is a Banach space.

Properties of the Besov spaces with the weight $w\equiv 1$ can be
found in monographs of Triebel \cite{tr1, tr2, tr3, tr4, tr5}. In
\cite{edm_tr, har_envel} the Besov spaces with admissible weights were
considered. Examples of such weights are
$w(x)=(1+\|x\|_2^2)^{\alpha/2}$, where $\|\cdot\|_2$ is the
standard Euclidean norm on $\R^d$. The case $w\in {\cal A}_\infty$
first was studied by Bui \cite{bui}, more general weights were
considered by Bownik, Frazier and Roudenko \cite{bow1, fr_roud}.
In recent papers of Haroske, Skrypczak, Piotrowska and Schneider
\cite{har_piot, har_schn, haroske} different characteristics of
the Besov spaces with Muckenhoupt weights were studied (for example,
atomic decompositions, wavelet characterizations, and growth
envelopes).

Let $X$ be a normed space, $n\in \Z_+$, let ${\cal L}_n(X)$ be a
set of subspaces in $X$ of dimension at most $n$. Denote $L(X, \,
Y)$ the space of linear continuous operators from $X$ into a
normed space $Y$, by ${\rm rk}\, A$ we denote the dimension of
image of an operator $A:X\rightarrow Y$, and by $\| A\|
_{X\rightarrow Y}$ its norm. By the Kolmogorov $n$-width of a set
$M\subset X$ in the space $X$ we mean the quantity
$$
d_n(M, \, X)=\inf _{L\in {\cal L}_n(X)} \sup_{x\in M}\inf_{y\in
L}\|x-y\|_X,
$$
and by linear $n$-width
we mean the quantity $$\lambda _n(M, \, X) =\inf _{A\in L(X, \, X), \,
{\rm rk} A\le n}\sup _{x\in M}\| x-Ax\| _X.$$

The approximation numbers of an operator $A\in L(X, \, Y)$ are defined
by
$$
{\cal A}_n(A:X\rightarrow Y)=\inf \{\| A-A_n\| _{X\rightarrow Y}:{\rm rk}\,
A_n\le n\}.
$$
If $M\subset X$ is the unit ball, then we denote $d_n(A(M), \,
Y)=: d_n(A:X\rightarrow Y)$ and $\lambda_n(A(M), \, Y)=:
\lambda_n(A:X\rightarrow Y)$. If ${\rm Id}$ is a compact operator
of embedding of $X$ into $Y$, then it follows from Heinrich's
results in \cite{heinr} (see Theorem 3.1) that
\begin{align}
\label{aneqln} {\cal A}_n({\rm Id}:X\rightarrow Y)=\lambda _n({\rm Id}:
X\rightarrow Y).
\end{align}

Let $1\le p\le \infty$. Denote $l_p^n$ the linear space $\R^n$ with the norm
$$\|(x_1, \, \dots , \, x_n)\| _p\equiv\|(x_1, \, \dots , \,
x_n)\| _{l_p^n}= \left\{
\begin{array}{l}(| x_1 | ^p+\dots+ | x_n | ^p)^{1/p},\text{ if
}p<\infty ,\\ \max \{| x_1 | , \, \dots, \, | x_n |\},\text{ if
}p=\infty ,\end{array}\right .$$ by $B_p^n$ denote the unit ball in
$l_p^n$.

In the sixties and seventies of the 20th century, problems on the values
of the widths were studied for function classes in
$L_q$ (see \cite{bibl6,
tikh_babaj, busl_tikh, bib_ismag, bib_kashin, bib_majorov,
bib_makovoz, bibl9, bibl10, bibl11, bibl12, bibl13, kashin1,
kulanin}, and also \cite{tikh_nvtp, itogi_nt, kniga_pinkusa})
and for the finite-dimensional balls $B_p^n$ in
$l_q^n$. For $p\ge q$, Pietsch \cite{pietsch1} and Stesin
\cite{stesin} found exact values of $d_n(B_p^\nu, \, l_q^\nu)$
and $\lambda_n(B_p^\nu, \, l_q^\nu)$. In the case of $p<q$, Kashin
\cite{bib_kashin}, Gluskin \cite{gluskin1,
bib_gluskin}, and Garnaev and Gluskin \cite{garn_glus} determined order values
of the widths of finite-dimensional balls up to quantities depending on
$p$ and $q$ only.

Order estimates for the widths of nonweighted Sobolev classes
were obtained by Tikhomirov, Ismagilov, Makovoz, Kashin, Temlyakov,
Galeev and Kulanin \cite{bibl6,
tikh_babaj, bib_ismag, bib_kashin, bib_makovoz, bibl9, bibl10,
bibl11, bibl12, bibl13, kulanin}.

Estimates for Kolmogorov and linear widths of weighted Sobolev classes
on smooth bounded domains with weights of the form $w(x)=\bigl[\operatorname{dist}(x, \,
\partial \Omega)\bigr]^\alpha$ were obtained by Triebel \cite{tr1}
(here $\operatorname{dist}(x, \,
\partial \Omega)$ is a distance from $x$ to $\partial \Omega$).
Estimates for linear widths of weighted Sobolev classes on~$\R^d$
with weights of the form $w_\alpha(x)=(1+\|x\|_2^2)^{\alpha/2}$
were established by Mynbaev and Otelbaev in~\cite{myn_otel}.

Denote by $|\cdot|$ an arbitrary norm on $\R^d$.

Problems on estimates of the entropy and approximation numbers of
embeddings of weighted Besov spaces were studied in papers of
Triebel, Haroske, Skrzypczak, K\"{u}hn, Leopold, Sickel, Caetano
\cite{haroske, har_tri, har_tr_wav, skr, kuhn1, kuhn2, kuhn3,
kuhn4, kuhn_leopold, kuhn5, haroske2, haroske3, caetano};
Kolmogorov and Gelfand widths were considered by G\c{a}siorowska,
Skrzypczak, Shun Zhang, Gensun Fang, Fanglun Huang \cite{gasior,
zhang1, zhang2}, and Weyl numbers were studied in \cite{gasior}.
In addition, asymptotic behavior of Kolmogorov, Gelfand and linear
widths of non-weighted Besov spaces on a bounded domain were found
in the paper of Vybiral \cite{vybiral}. In \cite{har_tr_wav, skr,
kuhn4, caetano, zhang1, zhang2} weights of form
$w_\alpha(x)=(1+\|x\|_2^2)^{\alpha/2}$ were considered. In
\cite{haroske} asymptotic estimates of the approximation and
entropy numbers of embedding operator ${\rm Id}:B^{s_1}_{p_1, \,
q_1}(\R^d, \, w)\rightarrow B^{s_2}_{p_2, \, q_2}(\R^d)$ were
obtained for
$$
w(x)=\left\{ \begin{array}{l} |x|^\alpha, \;\; |x|\le 1, \\
|x|^\beta, \;\; |x|>1.\end{array}\right.
$$
In \cite{haroske2} there was found a sufficient condition on a weight
$w$ (in terms of Muckenhoupt condition), under which the approximation numbers
of embedding operator ${\rm Id}:B^{s_1}_{p_1, \, q_1}(Q, \,
w)\rightarrow B^{s_2}_{p_2, \, q_2}(Q)$ have the same asymptotic behavior as
in the non-weighted case. Here $Q\subset \R^d$ is a cube,
$$
B^s_{p,q}(Q, \, w)=\{f|_Q: \, f\in B^s_{p,q}(\R^d, \, w)\}.
$$
In \cite{haroske3} for the weight function
$$
w(x)=\left\{ \begin{array}{l} |x|^{\alpha_1}(1-\log  |x|)^{\beta_1}, \;\; |x|\le 1, \\
|x|^{\alpha_2}(1+\log |x|)^{\beta_2}, \;\; |x|>1\end{array}\right .
$$
asymptotic behavior of the entropy numbers of the operator ${\rm
Id}:B^{s_1}_{p_1, \, q_1}(\R^d, \, w)\rightarrow B^{s_2}_{p_2, \,
q_2}(\R^d)$ were obtained under some conditions on parameters.
Here the parameters $\alpha _1$ and $\beta _1$ did not affect the
asymptotic. In \cite{kuhn5, gasior} weights of logarithmic type
were considered. Examples of such weights are continuous positive
functions such that $c_1(\log |x|)^\alpha\le w(x)\le c_2(\log
|x|)^\alpha$ for sufficiently large $|x|$ ($0<c_1\le
c_2<\infty$).

Similar problems for radial Besov spaces were studied in \cite{kuhn_leop2, gasior1}.

Let us formulate the main result of this paper. We first give
necessary notations and formulate the main theorem, and then compare
this result with those previously known.

We denote $\log _2 x=\frac{\log x}{\log 2}$.

Let $X$, $Y$ be sets, and $f_1$, $f_2:X\times Y\rightarrow \R_+$.
We denote $f_1(x, \, y)\underset{y}{\lesssim} f_2(x, \, y)$
($f_2(x, \, y)\underset{y}{\gtrsim} f_1(x, \, y)$), or $f_1(x, \,
y)\overset{x}{\lesssim} f_2(x, \, y)$ ($f_2(x, \,
y)\overset{x}{\gtrsim} f_1(x, \, y)$), if there exists a positive
function $c:Y\rightarrow \R$ such that $f_1(x, \, y)\le c(y)f_2(x,
\, y)$ for any $x\in X$. Denote $f_1(x, \, y)\underset{y}{\asymp}
f_2(x, \, y)$ (or $f_1(x, \, y)\overset{x}{\asymp} f_2(x, \, y)$),
if $f_1(x, \, y) \underset{y}{\lesssim} f_2(x, \,
y)\underset{y}{\lesssim} f_1(x, \, y)$.

For $s_1$, $s_2\in \R$, $1<p_1, \, q_1\le +\infty$, $1\le p_2, \,
q_2<+\infty$ we put
$\delta:=s_1-s_2+\frac{d}{p_2}-\frac{d}{p_1}$. We shall consider
the case $1<p_1\le p_2<\infty$, $1<q_1\le q_2<\infty$.

Following \cite{vas_alg_an}, define functions $g$ and $v$.

Let $\delta>0$, $\beta_g$, $\beta_v\in \R$, $\beta _g+\beta_v =\delta$,
$\beta_g >-\frac{d}{p_1}$, $\beta_v <\frac{d}{p_2}$, $\gamma
_g>-\frac{d}{p_1}$, $\gamma _v<\frac{d}{p_2}$,
$\gamma_g+\gamma_v>\delta$, $\alpha _g+\alpha _v>0$, $\rho _g$,
$\rho _v:[1, \, +\infty)\rightarrow (0, \, +\infty)$ be absolute
continuous functions such that
\begin{align}
\label{malo_izm} \lim \limits _{y\rightarrow +\infty}\frac{y\rho
'_g(y)}{\rho _g(y)}=0, \;\;\lim \limits _{y\rightarrow
+\infty}\frac{y\rho '_v(y)}{\rho _v(y)}=0,
\end{align}
$$
g(x)=\left\{ \begin{array}{l}
|x|^{-\beta_g}|\log _2|x||^{-\alpha _g}\rho _g(|\log _2|x||),
\text{ if } |x|\le \frac 12,\\
|x|^{-\gamma_g}, \text{ if }|x|>\frac 12,
\end{array}
\right.
$$
$$
v(x)=\left\{ \begin{array}{l} |x|^{-\beta_v}|\log _2|x||^{-\alpha
_v}\rho _v(|\log _2|x||), \text{ if } |x|\le \frac 12, \\
|x|^{-\gamma_v}, \text{ if }|x|>\frac 12.\end{array}\right .
$$

Let $w_1(x)=g^{-p_1}(x)$, $w_2(x)=v^{p_2}(x)$. Then $w_1$, $w_2\in
{\cal A}_\infty$. Indeed, there exist $\underline{c}>0$,
$\overline{c}>0$ such that for any $0<r\le t\le 2r$ we have
$\underline{c}\le\frac{w_i(r)}{w_i(t)}\le \overline{c}$, $i=1, \,
2$. Hence, it is sufficient to consider balls $B$ centered at the
origin. If $0<r<\frac 12$ is the radius of the ball $B$, then
\begin{align}
\label{ilbw1ovrsrrbp1d} \int \limits _B w_1(x)\, dx=\int \limits
_B |x|^{p_1\beta_g}|\log _2|x||^{p_1\alpha _g} \rho
_g^{-p_1}(|\log _2|x||)\, dx\overset{r}{\asymp}r^{\beta_g p_1+d}
|\log _2 r|^{\alpha _gp_1}\rho _g(|\log _2 r|)^{-p_1},
\end{align}
\begin{align}
\label{ilbw2ovrsrrbp2d} \int \limits _B w_2(x)\, dx=\int \limits
_B |x|^{-p_2\beta_v}|\log _2|x||^{-p_2\alpha _v} \rho
_v^{p_2}(|\log _2|x||)\, dx\overset{r}{\asymp}r^{-\beta_v p_2+d}
|\log _2 r|^{-\alpha _vp_2}\rho _v(|\log _2 r|)^{p_2},
\end{align}
and for sufficiently large $\theta >1$ we get
$$
\int \limits _B w_1^{-\theta '/\theta}(x)\, dx\overset{r}{\lesssim}
r^{-\frac{\beta_g p_1\theta '}{\theta}+d}|\log _2 r|
^{-\frac{\alpha _gp_1\theta '}{\theta}}\rho _g(|\log _2 r|)
^{\frac{p_1\theta '}{\theta}},
$$
$$
\int \limits _B w_2^{-\theta '/\theta}(x)\,
dx\overset{r}{\lesssim} r^{\frac{\beta_v p_2\theta
'}{\theta}+d}|\log _2 r| ^{\frac{\alpha _vp_2\theta '}{\theta}}\rho
_v(|\log _2 r|) ^{-\frac{p_2\theta '}{\theta}}.
$$
Therefore,
$$
\left(\frac{1}{|B|}\int \limits _B w_i(x)\, dx\right)^{1/\theta}
\left(\frac{1}{|B|}\int \limits _B w_i^{-\theta '/\theta}(x)\,
dx\right)^{1/\theta '} \overset{r}{\lesssim}1.
$$
Similar estimates can be obtained for $r\ge \frac 12$. Hence,
$w_i\in {\cal A}_\theta$, $i=1, \, 2$.

Denote $\alpha =\alpha _g+\alpha _v$, $\rho(y)=\rho _g(y)\rho
_v(y)$. Then $\alpha>0$,
\begin{align}
\label{malo_izm331}
\lim \limits _{y\rightarrow +\infty}\frac{y\rho '(y)}{\rho (y)}=0.
\end{align}
For $1\le p_1\le p_2\le \infty$ such that $p_2\ge 2$ we denote
${\bf p}=(p_1, \, p_2)$,
\begin{align}
\label{lamb_p} \lambda({\bf p})=\min
\left\{\frac{\frac{1}{p_1}-\frac{1}{p_2}}{\frac 12
-\frac{1}{p_2}}, \, 1\right\}
\end{align}
(if $p_2=2$, we put $\lambda({\bf p})=1$). Let us remark that if
$p_2>2$, then
\begin{align} \label{lamp1p2} \lambda({\bf
p})<1\;\;\Leftrightarrow p_1>2.
\end{align}

Denote
$$
J_*=J_*(p_1, \, q_1, \, p_2, \, q_2)=\left\{ \begin{array}{l} \{1,
\, 2, \, 3, \, 4\}, \text{ if }p_2>2, \; q_2>2,
\\ \{1, \, 2, \, 3\}, \text{ if }p_2>2, \; q_2=2, \\ \{2, \, 3, \, 4\}, \text{ if }
p_2=2, \; q_2>2,\end{array}\right.
$$
$$
J_{**}=J_{**}(p_1, \, q_1, \, p_2, \, q_2)=\left\{
\begin{array}{l} \{1, \, 2, \, 3, \, 4\}, \text{ if }\min\{p_2, \, p_1'\}>2, \;
\min \{q_2, \, q_1'\}>2,
\\ \{1, \, 2, \, 3\}, \text{ if }\min\{p_2, \, p_1'\}>2, \; \min \{q_2, \, q_1'\}=2, \\
\{2, \, 3, \, 4\}, \text{ if } \min \{p_2, \, p_1'\}=2, \; \min
\{q_2, \, q_1'\}>2.\end{array}\right.
$$
\begin{Trm}
\label{main}
\begin{enumerate}
\item Let $1<p_1\le p_2<\infty$, $1<q_1\le q_2<\infty$, $p_2\ge
2$, $q_2\ge 2$,
$$\theta _1=\frac{\delta}{d}+\frac{\lambda({\bf p})}{2}-
\frac{\lambda({\bf p})}{p_2}, \;\; \theta_2=\frac{p_2\delta}{2d},$$
$$\theta_3=\alpha+\frac{\lambda({\bf q})}{2}-\frac{\lambda({\bf
q})}{q_2}, \;\; \theta _4=\frac{q_2\alpha }{2},$$
$\sigma_1=\sigma_2=0$, $\sigma_3=1$, $\sigma_4=\frac{q_2}{2}$.
Suppose that there exists $j_*\in J_*$ such that $\theta
_{j_*}<\min _{j\in J_*\backslash\{j_*\}}\theta _j$. Then
$$
d_n({\rm Id}:B^{s_1}_{p_1,q_1}(\R^d, \, w_1)\rightarrow B^{s_2}_{p_2,q_2}(\R^d,
\, w_2))\overset{n}{\asymp} n^{-\theta _{j_*}}\rho(n^{\sigma
_{j_*}}).
$$
\item Let $1<p_1\le 2\le p_2<\infty$, $1<q_1\le 2\le q_2<\infty$,
$$
\tilde \theta _1=\frac{\delta}{d}+\min \left\{\frac{1}{2}-
\frac{1}{p_2}, \, \frac{1}{p_1}-\frac 12\right\}, \;\;
\tilde\theta_2=\frac{\min\{p_2, \, p_1'\}\delta}{2d},$$
$$\tilde\theta_3=\alpha+\min \left\{\frac{1}{2}-\frac{1}{q_2}, \,
\frac{1}{q_1}-\frac 12\right\}, \;\; \tilde\theta
_4=\frac{\min\{q_2, \, q_1'\}\alpha }{2},$$
$\tilde\sigma_1=\tilde\sigma_2=0$, $\tilde\sigma_3=1$,
$\tilde\sigma_4=\frac{\min\{q_2, \, q_1'\}}{2}$. Suppose that
there exists $j_*\in J_{**}$ such that $\tilde\theta _{j_*}<\min
_{j\in J_{**}\backslash\{j_*\}}\tilde\theta _j$. Then
$$
{\cal A}_n({\rm Id}:B^{s_1}_{p_1,q_1}(\R^d, \, w_1)\rightarrow
B^{s_2}_{p_2,q_2}(\R^d, \, w_2))=
$$
$$
=\lambda_n({\rm Id}:B^{s_1}_{p_1,q_1}(\R^d, \, w_1)\rightarrow
B^{s_2}_{p_2,q_2}(\R^d, \, w_2))\overset{n}{\asymp}
n^{-\tilde\theta _{j_*}}\rho(n^{\tilde\sigma _{j_*}}).
$$
\item Let $1<p_1\le p_2<\infty$, $1<q_1\le q_2<\infty$. If $p_2\le 2$,
$q_2\le 2$ and $\alpha\ne \frac{\delta}{d}$, then
$$
d_n({\rm Id}:B^{s_1}_{p_1,q_1}(\R^d, \, w_1)\rightarrow B^{s_2}_{p_2,q_2}(\R^d,
\, w_2))\overset{n}{\asymp} \lambda_n({\rm Id}:B^{s_1}_{p_1,q_1}(\R^d, \,
w_1)\rightarrow B^{s_2}_{p_2,q_2}(\R^d, \, w_2))=
$$
$$
={\cal A}_n({\rm Id}:B^{s_1}_{p_1,q_1}(\R^d, \, w_1)\rightarrow
B^{s_2}_{p_2,q_2}(\R^d, \,
w_2))\overset{n}{\asymp}\max\{n^{-\frac{\delta}{d}}, \,
n^{-\alpha}\rho(n)\}.
$$
If $p_1\ge 2$, $q_1\ge 2$ and $\alpha\ne \frac{\delta}{d}$, then
$$
\lambda_n({\rm Id}:B^{s_1}_{p_1,q_1}(\R^d, \,
w_1)\rightarrow B^{s_2}_{p_2,q_2}(\R^d, \, w_2))=
$$
$$
={\cal A}_n({\rm Id}:B^{s_1}_{p_1,q_1}(\R^d, \, w_1)\rightarrow
B^{s_2}_{p_2,q_2}(\R^d, \,
w_2))\overset{n}{\asymp}\max\{n^{-\frac{\delta}{d}}, \,
n^{-\alpha}\rho(n)\}.
$$
\end{enumerate}
\end{Trm}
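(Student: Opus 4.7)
The plan is to reduce the widths to a problem about weighted direct sums of finite-dimensional balls via wavelet decomposition. First I would invoke a compactly supported wavelet characterization of $B^s_{p,q}(\R^d, w)$ for $w\in {\cal A}_\infty$, available through the work of Bownik, Frazier-Roudenko and Haroske-Piotrowska-Skrzypczak cited in the introduction: the Besov norm is equivalent to
\[
\Bigl(\sum_{j\ge 0} 2^{jq(s+d/2-d/p)}\Bigl(\sum_{k\in \Z^d} |c_{j,k}|^p w(Q_{j,k})\Bigr)^{q/p}\Bigr)^{1/q},
\]
where $\{c_{j,k}\}$ are the wavelet coefficients and $Q_{j,k}$ is the dyadic cube of side $2^{-j}$ at $2^{-j}k$. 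This turns ${\rm Id}:B^{s_1}_{p_1,q_1}(\R^d, w_1)\to B^{s_2}_{p_2,q_2}(\R^d, w_2)$ into a diagonal operator between two weighted sequence spaces.

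Next I would group the indices $(j,k)$ by scale $j\ge 0$ and by dyadic distance $m$ to the origin, so that $|2^{-j}k|\asymp 2^{-m}$. For $0\le m\le j$ the $(j,m)$-block has cardinality $N_{j,m}\asymp 2^{(j-m)d}$ and lies in $\{|x|\asymp 2^{-m}\}$; the computations \eqref{ilbw1ovrsrrbp1d}-\eqref{ilbw2ovrsrrbp2d} show that on such a block both weights are essentially constant, so extracting the scalar factors $\lambda_{j,m}$ turns the embedding into a weighted diagonal operator
\[
\bigoplus_{(j,m)}\lambda_{j,m}\cdot {\rm Id}_{N_{j,m}}:l_{q_1}\bigl(l^{N_{j,m}}_{p_1}\bigr)\to l_{q_2}\bigl(l^{N_{j,m}}_{p_2}\bigr),
\]
with $\lambda_{j,m}$ a product of a smoothness factor $2^{-j\delta}$, a spatial power $2^{-m(\beta_g+\beta_v)}$, a logarithmic factor $|\log 2^{-m}|^{-\alpha}$ and $\rho(|\log 2^{-m}|)$. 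The outer blocks $m\le 0$ correspond to the tail region $|x|\ge 1$, where $g,v$ are pure powers and must be treated separately.

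The estimates then reduce to widths of weighted direct sums of identity maps between mixed-norm balls, which the abstract announces as a separate result of the paper. Two competing rates appear: the smoothness rate $\delta/d$ when the active index is the scale $j$, and the singularity rate $\alpha$ when the active index is the annulus $m$; within a single block one applies Pietsch-Stesin in the easy regime $\lambda({\bf p})=1$ (equivalently $p_1\le 2$) and Kashin-Gluskin / Garnaev-Gluskin otherwise, which yields the four candidate exponents $\theta_1,\ldots,\theta_4$ in part (1), and $\tilde\theta_1,\ldots,\tilde\theta_4$ in part (2) for linear widths, where duality enters through $\min\{p_2,p_1'\}$ and $\min\{q_2,q_1'\}$. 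The hypothesis $\theta_{j_*}<\min_{j\ne j_*}\theta_j$ ensures a single regime dominates, and the slowly varying factor $\rho(n^{\sigma_{j_*}})$ records the log-correction at the extremizing scale or annulus. For the upper bound one allocates the approximation budget $n$ among the blocks; for the lower bound one restricts to a single scale, a single annulus, or a narrow diagonal of blocks depending on $j_*$, and invokes the matching finite-dimensional lower bound. Part (3) is simpler: in its two sub-cases either $d_n=\lambda_n$ holds by duality or each block has a single-power expression, so only the competition between the two pure rates $n^{-\delta/d}$ and $n^{-\alpha}\rho(n)$ has to be resolved, and the hypothesis $\alpha\ne \delta/d$ removes the borderline case.

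The main obstacle will be the sharp lower bound in the mixed regimes $j_*\in\{1,3\}$: the extremal family is spread across many blocks, so one must select a family on which Gluskin's cross-polytope lower bound can be realized simultaneously and then show that the slowly varying factor $\rho$, evaluated at an appropriate intermediate scale, survives the aggregation. The slow-variation conditions \eqref{malo_izm} and \eqref{malo_izm331} are indispensable here, guaranteeing that $\rho(n^{\sigma_{j_*}})$ is not further distorted by logarithmic excursions of $\rho$ between neighbouring scales or annuli. A secondary complication is the bookkeeping of the tail blocks $m\le 0$, which normally contribute the classical non-weighted rate $n^{-\delta/d}$ and must be added back into the global budget allocation without spoiling the claimed asymptotic.
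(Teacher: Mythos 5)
Your plan coincides with the paper's actual proof: the reduction to sequence spaces is the Haroske--Skrzypczak isomorphism (Theorem \ref{isom_norm_est} and the diagram (\ref{com_dia})), the grouping by scale and by dyadic distance to the origin is the covering by the blocks ${\cal N}^1_{j,l}$, ${\cal N}^2_{j,l}$, ${\cal N}^{3,s}$, the finite-dimensional input is Theorems \ref{fin_dim_mix_norm} and \ref{lin_fin}, the budget allocation is Proposition \ref{add}, and the control of the slowly varying factor in the aggregation is Lemma \ref{osn_lemma}. The one remark worth making is that the lower bounds are in fact realized on single blocks (degenerate mixed-norm balls reducing to $B^{\nu}_{p_1}$ and $B^{\nu}_{q_1}$, plus the norm of a projection), so the difficulty you anticipate for $j_*\in\{1,3\}$ of spreading the extremal family over many blocks does not actually arise.
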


Under the hypotheses of this theorem, $d_n\overset{n}{\asymp}\lambda_n$ for
$\frac{1}{p_1}+\frac{1}{p_2}\ge 1$ and
$\frac{1}{q_1}+\frac{1}{q_2}\ge 1$.
In other cases, the orders of Kolmogorov and
linear widths are different in general.

Let us compare Theorem \ref{main} with the available results.
Given $w_2(x)\equiv 1$, we have $\beta_g=\delta$, $\alpha_g=\alpha$ and
$\rho_g=\rho$. If $\alpha>0$ is sufficiently large and $\tilde
\theta_1\ne \tilde \theta_2$, then
\begin{align}
\label{llll} \lambda_n\bigl({\rm Id}:B^{s_1}_{p_1,q_1}(\mathbb R^d, \,
w_1)\rightarrow B^{s_2}_{p_2,q_2}(\mathbb R^d, \, w_2)\bigr)
\overset{\,n}{\asymp} n^{-\min\{\tilde \theta_1, \, \tilde
\theta_2\}}.
\end{align}
In \cite{haroske} the function $\overline{w}_1=\overline{g}^{\,
-p_1}(x)$ with $\beta_{\overline{g}}<\delta$,
$\alpha_{\overline{g}}=0$, $\rho_{\overline{g}}(x)\equiv 1$ was
considered. Let $\gamma_{\overline{g}}>\delta$. In this case, $B^{s_1}_{p_1,q_1}(\mathbb R^d, \,
\overline{w}_1)\subset B^{s_1}_{p_1,q_1}(\mathbb R^d, \, w_1)$, and
the result of~\cite{haroske} on approximation
numbers follows from~\eqref{llll}. It is also worth noting that if
$\alpha$~is sufficiently small,
then the asymptotics depends on the parameters $q_1$ and~$q_2$.
In the previous results (see, e.g., \cite{haroske, haroske2, haroske3,
zhang1}), the orders of widths and entropy numbers were independent of
$q$'s (except for some limiting cases, in which $q$ was contained in
the exponent of a~logarithmic factor).

The similar result for widths of weighted Sobolev classes on a
segment was obtained in \cite{vas_alg_an}.

The proof of Theorem \ref{main} depends on estimates for the Kolmogorov and
linear widths of finite-dimensional balls in the mixed norm, as
obtained in Theorems \ref{fin_dim_mix_norm} and~\ref{lin_fin}.
Some other relations between parameters follow from the estimates for
the Kolmogorov widths of such sets, as obtained by Izaak
and Galeev in \cite{gal_izv, isaak, gal_mix}, where the
cases $p_1=1$, $q_1=\infty$, $p_2=2$ or $1<p_2\le \min\{q_2, \, 2\}$, $1\le
q_2\le \infty$ were examined.

\smallskip

In the present paper, we obtain order estimates for the Kolmogorov
widths with $p_2\ge \max\{p_1, \, 2\}$, $q_2\ge \max\{q_1, \, 2\}$,
and derive upper estimates for the linear widths
with $1<p_1\le 2\le p_2<\infty$ and $1<q_1\le 2\le q_2<\infty$.
Our approach is an extension of the argument used by Gluskin
\cite{gluskin1, bib_gluskin}. The most nontrivial steps (general
results on the Gelfand and linear widths and the construction of a~polyhedron)
were already made in \cite{bib_gluskin}. To obtain the
upper estimates for the Kolmogorov widths, it remains to apply the
H\"older inequalities several times. The proof of the lower
estimates is an extension of the arguments of~\cite{gluskin1}.
The upper estimates of the linear widths depend on
properties of the Gluskin polyhedrons and uses a~construction of polyhedrons of
a~more general form.

\section{Preliminary results}
Let us formulate the result which was proved by Kashin
\cite{bib_kashin} and Gluskin \cite{bib_gluskin}.
\begin{trma}
\label{teor_glus} Let $1<p\le q<\infty$. Then
\begin{align}
\label{gluskin} d_n(B_p^\nu, \, l_q^\nu)\underset{q,p}{\asymp}
\Phi(n, \, \nu, \, p, \, q),
\end{align}
\begin{align}
\label{gluskin_lin} \lambda_n(B_p^\nu, \, l_q^\nu)\underset{q,p}{\asymp}
\Psi(n, \, \nu, \, p, \, q),
\end{align}
where
$$
\Phi(n, \, \nu, \, p, \, q)=\left\{
\begin{array}{l} \min\left\{ 1, \, \left(\nu^{1/q}n^{-1/2}\right)
^{\left(\frac1p-\frac1q\right)/\left(\frac12-\frac1q\right)}\right\},
\;
2\le p\le q< \infty, \\
\max\left\{\nu ^{\frac 1q-\frac 1p}, \, \min \left(1, \, \nu
^{\frac 1q}n^{-\frac 12}
\right)\left(1-\frac{n}{\nu}\right)^{1/2}\right\}, \; 1< p<2\le q< \infty , \\
\max\left\{\nu ^{\frac1q-\frac1p}, \,
\left(1-\frac{n}{\nu}\right)^{\left(\frac1q-
\frac1p\right)/\left(1-\frac{2}{p}\right)}\right\}, \; 1< p\le
q\le 2,\end{array}\right.
$$
$$
\Psi(n, \, \nu, \, p, \, q)=\left\{
\begin{array}{l} \Phi(n, \, \nu, \, p, \, q), \;\;
\frac 1p+\frac 1q\ge 1, \\
\Phi(n, \, \nu, \, q', \, p'), \;\;
\frac 1p+\frac 1q\le 1.
\end{array} \right.
$$
\end{trma}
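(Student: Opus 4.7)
The plan is to split the statement along the three regimes of $\Phi$ and to reduce the linear widths to the Kolmogorov case by duality. By \eqref{aneqln} and the invariance of approximation numbers under adjoints, $\lambda_n(B_p^\nu, l_q^\nu) = \lambda_n(B_{q'}^\nu, l_{p'}^\nu)$, so the two cases of $\Psi$ collapse to a single case $\frac{1}{p} + \frac{1}{q} \ge 1$ of \eqref{gluskin_lin}. In that range one further establishes $\lambda_n \underset{p,q}{\asymp} d_n$ from the generic inequality $d_n \le \lambda_n$ together with the direct construction of a linear approximating operator (a suitable coordinate projection), so the core of the theorem is \eqref{gluskin} in all three regimes.

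For the upper bound in \eqref{gluskin} I would treat each regime separately. In regime (i), $2 \le p \le q < \infty$, factorize $\mathrm{Id}: l_p^\nu \to l_q^\nu$ through $l_2^\nu$ by H\"older and combine the elementary estimate $d_n(B_p^\nu, l_2^\nu) \le \nu^{1/2 - 1/p}$ with Kashin's theorem on $d_n(B_2^\nu, l_q^\nu)$; a Riesz--Thorin-type interpolation for widths yields the exponent $\lambda(p,q) = (\frac 1p - \frac 1q)/(\frac 12 - \frac 1q)$. In regime (ii), $1 < p < 2 \le q < \infty$, apply the random-subspace construction of Kashin--Garnaev--Gluskin directly: there exists an $n$-dimensional subspace $L$ such that $\sup_{x \in B_p^\nu} \dist_{l_q^\nu}(x, L) \lesssim \min(1, \nu^{1/q} n^{-1/2})(1 - n/\nu)^{1/2}$. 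The trivial term $\nu^{1/q - 1/p}$ comes from the H\"older upper bound $\|\cdot\|_q \le \nu^{1/q - 1/p}\|\cdot\|_p$. Regime (iii), $1 < p \le q \le 2$, is obtained from regime (i) by the finite-dimensional duality $d^n(B_p^\nu, l_q^\nu) = d_n(B_{q'}^\nu, l_{p'}^\nu)$ between Gelfand and Kolmogorov widths.

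For the lower bounds, the term $\nu^{1/q - 1/p}$ is delivered by a Mityagin-type volume comparison applied to the quotient $B_p^\nu / L$ for an arbitrary $n$-codimensional subspace $L$. The refined factor $\min(1, \nu^{1/q} n^{-1/2})(1 - n/\nu)^{1/2}$ in regime (ii) is established by passing to Gelfand widths and estimating the $l_q$-diameter of $B_p^\nu \cap L^\perp$ from below via a volume or Gaussian mean-width inequality in the spirit of Kashin and Gluskin. The exponent $(\frac 1q - \frac 1p)/(1 - \frac 2p)$ in regime (iii) is then recovered by dualizing the analogous lower bound from regime (i).

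The main obstacle is the matching pair of upper and lower bounds in regime (ii). Both ends are essentially probabilistic: the upper bound rests on a random choice of projection combined with a union bound over the extreme points of $B_p^\nu$, and the lower bound on a volume or mean-width estimate in the quotient $l_q^\nu / L$. Ensuring that the two sides agree up to constants depending only on $p$ and $q$ is the technical heart of the Kashin--Gluskin--Garnaev argument; once it is in place, the remaining regimes follow by H\"older, interpolation, and duality.
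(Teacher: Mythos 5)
The paper does not prove Theorem \ref{teor_glus}: it is quoted as a known result of Kashin \cite{bib_kashin} and Gluskin \cite{gluskin1, bib_gluskin} (with the linear-width case resting on the duality (\ref{dual}) of Ismagilov), so there is no internal argument to compare yours against. Judged as a reconstruction of those classical proofs, your outline does identify the right toolkit --- the interpolation lemma (Lemma \ref{theta}) for $2\le p\le q$, random subspaces for $1<p<2\le q$, duality for the linear widths, averaging/volume arguments for the lower bounds --- and the paper reuses exactly these devices in Sections 3 and 4 to prove the mixed-norm analogues.

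Two steps, however, are genuinely wrong or circular as written. First, your claim that in the range $\frac1p+\frac1q\ge 1$ the relation $\lambda_n\asymp d_n$ follows from $d_n\le\lambda_n$ together with ``a suitable coordinate projection'' cannot work: for $p\le q$ and $n<\nu$ every coordinate projection $P_S$ of rank $n$ satisfies $\|{\rm Id}-P_S\|_{l_p^\nu\to l_q^\nu}=1$, whereas for, say, $n\asymp\nu/2$ and $1<p<2\le q$ one needs a rank-$n$ operator with error of order $\nu^{1/q-1/2}\to 0$. Producing such an operator is precisely the content of Gluskin's random-matrix and polyhedron construction (\cite{bib_gluskin}; cf.\ Lemmas \ref{lemma1}--\ref{lemma3} and Theorem \ref{lin_fin} in this paper) and is the hardest part of (\ref{gluskin_lin}), not a routine supplement to the Kolmogorov case. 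Second, regime (iii) does not follow from regime (i) by duality: the finite-dimensional duality converts the Kolmogorov width $d_n(B_p^\nu, l_q^\nu)$ into the \emph{Gelfand} width $d^n(B_{q'}^\nu, l_{p'}^\nu)$ with $2\le q'\le p'$, which is not a quantity covered by regime (i); note also that the answer in regime (iii) carries the factor $\left(1-\frac{n}{\nu}\right)^{\left(\frac1q-\frac1p\right)/\left(1-\frac2p\right)}$ rather than the formal dual of $\left(\nu^{1/q}n^{-1/2}\right)^{\lambda}$. The lower bound in that regime is obtained directly by Gluskin's averaging over signed permutations of the polytope $V_{r,l}$ (the argument reproduced in the lower-bound part of the proof of Theorem \ref{fin_dim_mix_norm}), not by dualizing regime (i).
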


It was proved in the paper of D.D. Haroske and L. Skrzypczak
\cite{haroske} that the space $B^s_{p,q}(\R^d, \, w)$ is
isomorphic to a certain space of sequences. Let us give the
precise formulation of their result.

Let $\nu \in \Z_+$, ${\bf m}\in \Z^d$. Denote $Q_{\nu ,{\bf m}}$
the $d$-dimensional cube with sides parallel to coordinate axes,
centered at $2^{-\nu}{\bf m}$ and with side length $2^{-\nu}$,
$$
\chi _{\nu,{\bf m}}^{(p)}(x)=\left \{ \begin{array}{l} 2^{\frac{\nu
d}{p}}, \;\; x\in Q_{\nu , {\bf m}}, \\ 0, \;\;x\notin Q_{\nu ,
{\bf m}}.\end{array}\right .
$$

Let $\sigma \in \R$, $w\in {\cal A}_\infty$, $\lambda_{\nu,{\bf
m}}\in \C$, $\nu \in \Z_+$, ${\bf m}\in \Z^d$. Denote
$$
w(Q_{\nu , \, {\bf m}})=\int \limits _{Q_{\nu ,{\bf m}}}w(x)\, dx,
$$
$$
\| (\lambda _{\nu , {\bf m}})_{\nu\in \Z_+, \, {\bf m}\in \Z^d}\|
_{b^\sigma _{p,q}(w)}= \left(\sum \limits _{\nu =0}^\infty
2^{\nu\sigma q}\left\| \sum \limits _{{\bf m}\in \Z^d}\lambda
_{\nu , {\bf m}}\chi ^{(p)}_{\nu , {\bf m}}\right\|
^q_{L_p(w)}\right)^{1/q}=
$$
$$
=\left(\sum \limits _{\nu =0}^\infty 2^{\nu q\left(\sigma +\frac
dp\right)}\left(\sum \limits _{{\bf m}\in \Z^d}|\lambda _{\nu, \,
{\bf m}}|^p w(Q_{\nu ,\, {\bf m}})\right)^{q/p}\right)^{1/q},
$$
$$
b^\sigma _{p,q}(w)=\left\{(\lambda _{\nu , {\bf m}})_{\nu \in
\Z_+, \, {\bf m}\in \Z^d}:\lambda _{\nu , {\bf m}}\in \C, \; \|
(\lambda _{\nu , {\bf m}})_{\nu \in \Z_+, \, {\bf m}\in \Z^d}\|
_{b^\sigma _{p,q}(w)}<\infty\right\}.
$$
The space $b^\sigma_{p,q}(w)$ is quasi-Banach (and Banach, if
$p\ge 1$, $q\ge 1$).

Let $p_i$, $q_i>0$, $s_i$, $\sigma _i\in \R$, $w_i\in {\cal
A}_\infty$, $i=1, \, 2$, $B^{s_1}_{p_1,q_1}(\R^d, \, w_1)\subset
B^{s_2}_{p_2, \, q_2}(\R^d, \, w_2)$, $b^{\sigma
_1}_{p_1,q_1}(w_1)\subset b^{\sigma _2}_{p_2, \, q_2}(w_2)$. We
shall denote by ${\rm Id}:B^{s_1}_{p_1,q_1}(\R^d, \,
w_1)\rightarrow B^{s_2}_{p_2, \, q_2}(\R^d, \, w_2)$ and ${\rm
id}:b^{\sigma _1}_{p_1,q_1}(w_1)\rightarrow b^{\sigma _2}_{p_2, \,
q_2}(w_2)$ the natural embedding operators.

Denote $\sigma(s, \, p)=s+\frac{d}{2}-\frac{d}{p}$.
\begin{trma}
\label{isom_norm_est} For any weight function $w\in {\cal
A}_\infty$ and any $s\in \R$, $p$, $q\in (0, \, +\infty)$ there
exists an isomorphism $T_{s,p,q,w}:B^s_{p,q}(\R^d, \,
w)\rightarrow b^{\sigma(s, \, p)} _{p,q}(w)$. Moreover, for any
$w_1$, $w_2\in {\cal A}_\infty$, $s_i\in \R$, $p_i$, $q_i\in (0,
\, +\infty)$, $i=1, \, 2$, such that the operator ${\rm
Id}:B^{s_1}_{p_1,q_1}(\R^d, \, w_1)\rightarrow
B^{s_2}_{p_2,q_2}(\R^d, \, w_2)$ or ${\rm id}:b^{\sigma (s_1, \,
p_1)}_{p_1,q_1}(w_1)\rightarrow b^{\sigma(s_2, \, p_2)}
_{p_2,q_2}(w_2)$ is continuous, the following diagram
\begin{equation*}\begin{CD} B^{s_1}_{p_1,q_1}(\R^d, \, w_1) @>T_{s_1,p_1,q_1,w_1}>> b^{\sigma
(s_1, \, p_1)}_{p_1,q_1}(w_1)\\
@V{{\rm Id}}VV @V{{\rm id}}VV\\
B^{s_2}_{p_2,q_2}(\R^d, \, w_2) @>T_{s_2,p_2,q_2,w_2}>>
b^{\sigma(s_2, \, p_2)} _{p_2,q_2}(w_2)\end{CD}\end{equation*} is
commutative.
\end{trma}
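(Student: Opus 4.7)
\textbf{Proof plan for Theorem A (\ref{isom_norm_est}).} The plan is to construct $T_{s,p,q,w}$ as the wavelet coefficient map, independent of the parameters $(s,p,q,w)$, and to verify the required norm equivalence by reducing the Littlewood--Paley definition of $B^{s}_{p,q}(\R^{d},w)$ to an equivalent $\varphi$-transform / wavelet characterization. First, I would fix a compactly supported orthonormal wavelet basis $\{\psi_{\nu,{\bf m}}\}_{\nu\in\Z_{+},{\bf m}\in\Z^{d}}$ of $L_{2}(\R^{d})$ of Daubechies type, chosen with sufficiently many vanishing moments and sufficient smoothness relative to $|s|$ and to the $\mathcal{A}_{p}$ exponents of $w$. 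For such a basis one has, for every fixed parameter set $(s,p,q,w)$ with $w\in\mathcal{A}_{\infty}$, the equivalence
\[
\|f\|_{B^{s}_{p,q}(\R^{d},w)}\asymp\left(\sum_{\nu=0}^{\infty}2^{\nu q(s+d/2)}\Big\|\sum_{{\bf m}\in\Z^{d}}\langle f,\psi_{\nu,{\bf m}}\rangle\,\chi^{(p)}_{\nu,{\bf m}}\,2^{-\nu d/p}\Big\|_{L_{p}(w)}^{q}\right)^{1/q},
\]
which, after rewriting $\chi^{(p)}_{\nu,{\bf m}}=2^{\nu d/p}\mathbf{1}_{Q_{\nu,{\bf m}}}$ and comparing with the definition of $b^{\sigma(s,p)}_{p,q}(w)$, gives precisely $T_{s,p,q,w}(f)=(\lambda_{\nu,{\bf m}}(f))$ with $\lambda_{\nu,{\bf m}}(f)=2^{\nu d/2}\langle f,\psi_{\nu,{\bf m}}\rangle$.

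The wavelet characterization above is the main technical step. I would derive it in two halves. For the upper bound, I would take $f\in B^{s}_{p,q}(\R^{d},w)$, write $f=\sum_{j}f_{j}$ with $f_{j}=\mathcal{F}^{-1}(\varphi_{j}\mathcal{F}f)$, expand each $f_{j}$ against $\psi_{\nu,{\bf m}}$, use almost-orthogonality of frequency-localized functions against wavelets at a distant scale (exploiting the vanishing moments of $\psi$ and the Schwartz decay of $\mathcal{F}^{-1}\varphi_{j}$), and then estimate the resulting coefficient arrays by the weighted vector-valued Fefferman--Stein maximal inequality, which is available because $w\in\mathcal{A}_{p/r}$ for some $r<p$. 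For the lower bound, I would realize the smooth atoms/molecules built from the wavelets as a ``reproducing identity'' for $f$ and apply the same maximal estimate in reverse. This is exactly the scheme worked out by Bui, Bownik--Frazier--Roudenko and then adapted to the sequence space $b^{\sigma}_{p,q}(w)$ in the Haroske--Skrzypczak paper; the hard part is the weighted vector-valued maximal inequality together with the careful verification that the Muckenhoupt condition alone suffices for all admissible $s,p,q$.

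Finally, the commutativity of the diagram is essentially free once $T_{s,p,q,w}$ is defined via a fixed wavelet basis, independent of the parameters: for any $f$ in the smaller space, the family $(\lambda_{\nu,{\bf m}}(f))=(2^{\nu d/2}\langle f,\psi_{\nu,{\bf m}}\rangle)$ is the same whether computed from $f\in B^{s_{1}}_{p_{1},q_{1}}(\R^{d},w_{1})$ or from its image under ${\rm Id}$ in $B^{s_{2}}_{p_{2},q_{2}}(\R^{d},w_{2})$. Thus $T_{s_{2},p_{2},q_{2},w_{2}}\circ{\rm Id}={\rm id}\circ T_{s_{1},p_{1},q_{1},w_{1}}$, because both sides send $f\mapsto(\lambda_{\nu,{\bf m}}(f))$; the continuity of the embeddings on either side of the diagram then guarantees that this identity of sequences extends from, say, a dense subset of Schwartz functions to the full space. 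The main obstacle throughout is the weighted wavelet/atomic characterization; everything else is bookkeeping.
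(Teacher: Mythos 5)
The paper does not prove Theorem~\ref{isom_norm_est} at all: it is imported verbatim from Haroske--Skrzypczak \cite{haroske} (note the lettered \texttt{trma} environment used for quoted results), and your sketch follows exactly the route of that reference --- a parameter-independent wavelet (or $\varphi$-transform) coefficient map, the weighted characterization via vanishing moments plus the Fefferman--Stein vector-valued maximal inequality for ${\cal A}_\infty$ weights, and the observation that commutativity is automatic for a coefficient map that does not depend on $(s,p,q,w)$. So the proposal is consistent with the intended (cited) proof; the only quibble is the normalization of $\lambda_{\nu,{\bf m}}$ --- with $L_2$-normalized wavelets the matching coefficient is $\langle f,\psi_{\nu,{\bf m}}\rangle$ itself, the factor $2^{\nu d/2}$ being already accounted for by $\sigma(s,p)=s+\frac d2-\frac dp$ in the $b^{\sigma}_{p,q}(w)$-norm.
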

Also in the paper \cite{haroske} a criterion for the existence of
continuous and compact embedding of weighted Besov spaces was
obtained. We shall use the following form of this result. For
$\lambda=(\lambda_{\nu,{\bf m}})_{\nu\in \Z_+,{\bf m}\in \Z^d}$
denote
$$
(S\lambda)_{\nu,{\bf m}}=\tilde \lambda _{\nu ,{\bf
m}}=(w_2(Q_{\nu ,{\bf m}}))^{1/p_2}2^{\nu \left(\sigma(s_2, \,
p_2)+\frac{d}{p_2}\right)} \lambda _{\nu , \, {\bf m}}.
$$
Let $X_1$, $\tilde X_1$ and $X_2$ be spaces of sequences $\tilde
\lambda=(\tilde \lambda _{\nu ,{\bf m}})_{\nu\in \Z_+,{\bf m}\in
\Z^d}$ with
\begin{align}
\label{x1} \|\tilde \lambda\| _{X_1}=\left(\sum \limits _{\nu
=0}^{\infty}2^{\nu q_1(s_1-s_2)}\left(\sum \limits _{{\bf m}\in
\Z^d}w_1(Q_{\nu,{\bf m}})w_2(Q_{\nu ,{\bf m}})^{-p_1/p_2}|\tilde
\lambda _{\nu,{\bf m}}|^{p_1}\right)^{q_1/p_1}\right)^{1/q_1},
\end{align}
\begin{align}
\label{tilde_x1} \|\tilde \lambda\| _{\tilde X_1}=\left(\sum
\limits _{\nu =0}^{\infty}\left(\sum \limits _{{\bf m}\in
\Z^d}|\tilde \lambda _{\nu,{\bf
m}}|^{p_1}\right)^{q_1/p_1}\right)^{1/q_1}
\end{align}
and
\begin{align}
\label{x2} \|\tilde \lambda\| _{X_2}=\left(\sum \limits _{\nu
=0}^{\infty}\left(\sum \limits _{{\bf m}\in \Z^d}|\tilde \lambda
_{\nu,{\bf m}}|^{p_2}\right)^{q_2/p_2}\right)^{1/q_2},
\end{align}
respectively. Define operators $\tilde T_i:B^{s_i}_{p_i,q_i}(\R^d,
\, w_i)\rightarrow X_i$ by the formula $\tilde T_i=S\cdot
T_{s_i,p_i,q_i,w_i}$, $i=1, \, 2$. Then $\tilde T_i$ are
isomorphisms and the following diagram
\begin{align}
\label{com_dia}
\begin{CD} B^{s_1}_{p_1,q_1}(\R^d, \, w_1) @>\tilde T_1>> X_1 \\
@V{{\rm Id}}VV @V{{\rm id}}VV\\
B^{s_2}_{p_2,q_2}(\R^d, \, w_2) @>\tilde T_2>> X_2\end{CD}
\end{align}
is commutative.

Let ${\cal N}\subset \Z_+\times \Z^d$. For $\lambda =(\lambda
_{\nu , {\bf m}})_{\nu \in \Z_+, \, {\bf m}\in \Z^d}$ denote
\begin{align}
\label{def_of1_pmn} (P_{{\cal N}}\lambda)_{\nu , \, {\bf
m}}=\left\{ \begin{array}{l} \lambda _{\nu , \, {\bf m}}, \;
\text{ if }(\nu , \, {\bf m})\in {\cal N},
\\ 0,  \text{ otherwise.}\end{array}\right .
\end{align}

Denote
$$
{\cal M}_\nu =\{{\bf m}\in \Z^d:(\nu , \, {\bf m})\in {\cal N}\}.
$$

If ${\cal N}\ne \varnothing$, $p_1\le p_2$ and $q_1\le q_2$, then $\|P_{{\cal N}}\|_{\tilde
X_1\rightarrow X_2}=1$. In addition,
\begin{align}
\label{norm_bspq_proj} \| P_{{\cal N}}\| _{X_1\rightarrow \tilde
X_1}= \sup_{\nu\in \Z_+,\, {\bf m}\in {\cal M}_\nu}2^{-\nu
(s_1-s_2)}(w_1(Q_{\nu , {\bf m}}))^{-1/p_1} (w_2(Q_{\nu ,{\bf
m}}))^{1/p_2},
\end{align}
\begin{align}
\label{norm_bspq_proj_m1} \| P_{{\cal N}}\| _{\tilde
X_1\rightarrow X_1}= \sup_{\nu\in \Z_+,\, {\bf m}\in {\cal
M}_\nu}2^{\nu (s_1-s_2)}(w_1(Q_{\nu , {\bf m}}))^{1/p_1}
(w_2(Q_{\nu ,{\bf m}}))^{-1/p_2}
\end{align}
(it follows from results of the paper \cite{kuhn_leopold} or can
be proved directly).

{\bf Remark}. If $p_1>p_2$ or $q_1>q_2$ then the value of
$\|P_{{\cal N}}\| _{X_1\rightarrow X_2}$ was calculated in
\cite{kuhn_leopold}.

Let $X$, $Y$, $Z$ be normed spaces, $M\subset X$, let
$A:X\rightarrow Y$, $T:Z\rightarrow X$ be linear continuous
operators. It can be easily shown that
$$
d_n(AM, \, Y)\le \| A\| d_n(M, \, X), \;\; {\cal
A}_n(AT:Z\rightarrow Y)\le \| A\| {\cal A}_n(T:Z \rightarrow
X),\;\; n\in \Z_+.
$$
In particular, if ${\cal N}'\subset {\cal N}\subset \Z_+\times
\Z^d$, then
\begin{align}
\label{proj} d_n(P_{{\cal N}}:X_1\rightarrow X_2)\ge d_n(P_{{\cal
N}'}:X_1\rightarrow P_{{\cal N}'}X_2),
\end{align}
\begin{align}
\label{proj1} {\cal A}_n(P_{{\cal N}}:X_1\rightarrow X_2)\ge {\cal
A}_n(P_{{\cal N}'}:X_1\rightarrow P_{{\cal N}'}X_2).
\end{align}

\section{Estimates for Kolmogorov widths of finite-dimensional balls
in a mixed norm}
Let $m$, $k\in \N$, $1\le p, \, q<\infty$, let $l_{p,q}^{m,k}$
be the space $\R^{mk}$ endowed with the norm
$$
\left \| (x_{ij})_{1\le i\le m, \, 1\le j\le k}\right \|
_{l_{p,q}^{m,k}}=\left( \sum \limits _{j=1}^k\left(\sum \limits
_{i=1}^m |x_{ij}|^p\right)^{q/p}\right)^{1/q}
$$
and $B_{p,q}^{m,k}$ be the unit ball of this space. The dual space
of $l_{p,q}^{m,k}$ coincides with $l_{p',q'}^{m,k}$.

We denote by $B_X$ the unit ball of a normed space $X$; by $X^*$
denote the dual space of $X$.

The following lemma (see \cite{gluskin1}) is needed for the sequel.
\begin{Lem}
\label{theta} Let $X_0\subset X_{\theta}\subset
X_1$ be $N$-dimensional normed spaces and $0<\theta<1$.
Let for any $x^*\in X^*_1$ the following inequality holds:
$$
\| x^*\| _{X^*_\theta}\le \| x^*\| _{X_0^*}^{1-\theta}\| x^*\|
_{X_1^*}^\theta .
$$
Then for any $n\in \{0, \, \dots , \, N\}$ we have
\begin{align}
\label{interp_lem} d_n(B_{X_\theta}, \, X_1)\le (d_n(B_{X_0}, \,
X_1))^{1-\theta}.
\end{align}
\end{Lem}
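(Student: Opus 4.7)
The plan is to combine the duality characterisation of the distance to a subspace with the interpolation inequality for dual norms.

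First I would fix $\varepsilon>0$ and choose an approximating subspace $L\in{\cal L}_n(X_1)$ of dimension at most $n$ which is almost optimal for the Kolmogorov width on the left-hand side, that is,
$$
\sup_{x\in B_{X_0}}\inf_{y\in L}\|x-y\|_{X_1}\le d_n(B_{X_0},X_1)+\varepsilon=:d.
$$
The goal is to show that this same subspace witnesses $d^{1-\theta}$ as an upper bound for the Kolmogorov width of $B_{X_\theta}$ in $X_1$.

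Next, I would pass to duality via the Hahn--Banach formula. Since $X_0\subset X_\theta\subset X_1$ are finite-dimensional and $L\subset X_1$, for any $x\in X_1$ we have
$$
\inf_{y\in L}\|x-y\|_{X_1}=\sup\bigl\{|x^*(x)|:x^*\in X_1^*,\ \|x^*\|_{X_1^*}\le1,\ x^*|_L\equiv 0\bigr\}.
$$
Applying this to $x\in B_{X_0}$ and swapping suprema shows that every $x^*\in L^\perp$ with $\|x^*\|_{X_1^*}\le1$ satisfies $\|x^*\|_{X_0^*}\le d$. Now I invoke the hypothesis of the lemma: such an $x^*$ also lies in $X_\theta^*$ (by restriction, since $X_\theta\subset X_1$) and
$$
\|x^*\|_{X_\theta^*}\le\|x^*\|_{X_0^*}^{1-\theta}\|x^*\|_{X_1^*}^\theta\le d^{1-\theta}.
$$

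Finally, I would reverse the duality step for $X_\theta$: for any $x\in B_{X_\theta}$ and any $x^*$ as above, $|x^*(x)|\le\|x\|_{X_\theta}\|x^*\|_{X_\theta^*}\le d^{1-\theta}$, hence $\inf_{y\in L}\|x-y\|_{X_1}\le d^{1-\theta}$ by the same Hahn--Banach formula. Thus $d_n(B_{X_\theta},X_1)\le(d_n(B_{X_0},X_1)+\varepsilon)^{1-\theta}$, and letting $\varepsilon\to 0$ gives \eqref{interp_lem}. There is no real obstacle here --- everything is finite-dimensional, so compactness makes the suprema attained and the only substantive ingredient beyond duality is the interpolation inequality assumed on $X_\theta^*$; the proof is essentially a one-line application of it once the dual formulation is set up.
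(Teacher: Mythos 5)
Your argument is correct. The paper does not prove Lemma \ref{theta} at all --- it is quoted from Gluskin's paper \cite{gluskin1} --- so there is nothing to compare line by line; but your proof is exactly the standard one: the Hahn--Banach duality formula $\inf_{y\in L}\|x-y\|_{X_1}=\sup\{|x^*(x)|:\ x^*\in L^{\perp},\ \|x^*\|_{X_1^*}\le 1\}$ converts the near-optimality of $L$ for $B_{X_0}$ into the bound $\|x^*\|_{X_0^*}\le d$ on $L^{\perp}\cap B_{X_1^*}$, the assumed interpolation inequality then gives $\|x^*\|_{X_\theta^*}\le d^{1-\theta}$ there, and running the duality backwards shows the same $L$ approximates $B_{X_\theta}$ to within $d^{1-\theta}$; letting $\varepsilon\to 0$ finishes the proof. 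All steps are legitimate in this finite-dimensional setting, so the proposal is complete.
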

\begin{Sta}
\label{1xicrho} Let $1<p, \, q<\infty$, $r\in \N$. Then there
exists $c_1(p, \, q)>0$ such that for any $x=(x_1, \, \dots , \,
x_r)\in \R ^r$ the following inequality holds:
\begin{align}
\label{1xicrho1} \left(\sum \limits _{i=1}^r
|1-x_i|^p\right)^{q/p}\ge \frac{r^{q/p}}{2}+c_1(p, \, q)\left(\sum
\limits _{i=1}^r |x_i|^p\right)^{q/p}- qr ^{\frac{q}{p}-1}\sum
\limits _{i=1}^r x_i.
\end{align}
Here we may assume that $c_1(p, \, q)$ continuously depends on $(p, \, q)$.
\end{Sta}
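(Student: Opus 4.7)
The plan is to view the left-hand side of \eqref{1xicrho1} as $\Phi(\mathbf{1}-x)$, where $\Phi(y):=\|y\|_p^q=\bigl(\sum_i|y_i|^p\bigr)^{q/p}$ is convex on $\R^r$ (since $p,q\ge 1$) and $\mathbf{1}:=(1,\ldots,1)$. A direct computation gives $\Phi(\mathbf{1})=r^{q/p}$ and $\nabla\Phi(\mathbf{1})=qr^{q/p-1}\mathbf{1}$, so the tangent-line (convexity) inequality at $\mathbf{1}$ yields the unconditional bound
\[
\Phi(\mathbf{1}-x)+qr^{q/p-1}\sum_{i=1}^r x_i\ge r^{q/p}.
\]
This already absorbs the linear term $-qr^{q/p-1}\sum_i x_i$ on the right of \eqref{1xicrho1}; what remains is to upgrade $r^{q/p}$ to $r^{q/p}/2+c_1\|x\|_p^q$ by recovering part of the Bregman-type remainder.

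I would next fix a constant $C=C(q)>2$ (to be chosen later) and set $c_1(p,q):=1/(2C^q)$, then split on the size of $\|x\|_p$. If $\|x\|_p\le Cr^{1/p}$, then $c_1\|x\|_p^q\le r^{q/p}/2$, so the right-hand side of \eqref{1xicrho1} is $\le r^{q/p}$ and the displayed convexity bound alone suffices. In the complementary regime $\|x\|_p>Cr^{1/p}$, the reverse triangle inequality gives $\|\mathbf{1}-x\|_p\ge\|x\|_p-r^{1/p}\ge(1-1/C)\|x\|_p$, while H\"older's inequality yields $\sum_ix_i\ge -r^{1/p'}\|x\|_p$. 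Substituting, setting $v:=\|x\|_p/r^{1/p}>C$, and factoring out $r^{q/p}$, the desired inequality reduces to the purely one-variable statement
\[
\bigl((1-1/C)^q-c_1\bigr)v^q-qv\ge\tfrac12\qquad\text{for all }v\ge C.
\]
Note that $p$ has now dropped out entirely, so $c_1$ may be taken independent of $p$.

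Using $c_1C^q=1/2$, the endpoint $v=C$ of this one-variable estimate simplifies to $(C-1)^q\ge 1+qC$, and a short calculation of the derivative (combined with the fact that $v^{q-1}$ is increasing for $q>1$) shows the left-hand side is nondecreasing in $v$ on $[C,\infty)$, so the endpoint check controls the whole range. For any $q>1$ the function $(C-1)^q$ grows superlinearly, so the inequality $(C-1)^q\ge 1+qC$ holds for all sufficiently large $C$; defining $C(q)$ as the unique root $>2$ of $(C-1)^q=1+qC$ (smooth in $q$ by the implicit function theorem) gives a positive and continuous $c_1(p,q)=1/(2C(q)^q)$ on $(1,\infty)\times(1,\infty)$. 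This last step is the only real obstacle: the hypothesis $q>1$ is essential, since as $q\to 1^+$ we have $C(q)\to\infty$ and $c_1\to 0$, which reflects that the inequality genuinely degenerates in the $q=1$ limit.
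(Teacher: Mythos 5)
Your proof is correct and follows essentially the same route as the paper's: the tangent/convexity bound $\Phi(\mathbf{1}-x)\ge r^{q/p}-qr^{q/p-1}\sum_i x_i$, a case split on the size of $\|x\|_p^p/r$, and in the large regime a reverse-triangle (Minkowski) lower bound on $\|\mathbf{1}-x\|_p^q$ combined with H\"older applied to $\sum_i x_i$. The only differences are cosmetic (you normalize by $v=\|x\|_p r^{-1/p}$ so that $p$ drops out of the constant, whereas the paper's threshold $t_0(p,q)$ retains a $p$-dependence), so no further comment is needed.
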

\begin{proof}
Note that the left-hand side of (\ref{1xicrho1}) is convex
with respect to $x$. Therefore,
$$
\left(\sum \limits _{i=1}^r |1-x_i|^p\right)^{q/p}\ge r ^{q/p}-qr
^{\frac{q}{p}-1}\sum \limits _{i=1}^r x_i.
$$
If $\sum \limits _{i=1}^r |x_i|^p\le t\cdot r$ then there exists a
number $c_2(p, \, q, \, t)$ such that
$$
r ^{q/p}-qr ^{\frac{q}{p}-1}\sum \limits _{i=1}^r x_i\ge \frac 12
r ^{q/p}-qr ^{\frac{q}{p}-1}\sum \limits _{i=1}^r x_i+c_2(p, \, q,
\, t)\left(\sum \limits _{i=1}^r |x _i|^p\right)^{q/p}.
$$
Let us prove that there exists a number $t_0(p, \, q)>0$ such
that, if $\sum \limits _{i=1}^r |x_i|^p\ge t_0(p, \, q)r$, then
the following inequality holds:
$$
\left(\sum \limits _{i=1}^r |1-x_i|^p\right)^{q/p}\ge r ^{q/p}-qr
^{\frac{q}{p}-1}\sum \limits _{i=1}^r x_i+2^{-q}\left(\sum \limits
_{i=1}^r |x_i|^p\right)^{q/p}.
$$
Indeed, by the inequality $(a+b)^q\le 2^{q-1}(a^q+b^q)$ and the
Minkowski inequality we have
$$
\left(\sum \limits _{i=1}^r |1-x_i|^p\right)^{q/p}\ge
2^{1-q}\left(\sum \limits _{i=1}^r |x_i|^p\right)^{q/p}-r ^{q/p}.
$$
We claim that for sufficiently large $t_0(p, \, q)$ the right-hand
side is minorized by
$$
2^{-q}\left(\sum \limits _{i=1}^r |x_i|^p\right)^{q/p}+r ^{q/p}-qr
^{\frac{q}{p}-1}\sum \limits _{i=1}^r x_i,
$$
that is
$$
2r ^{q/p}-qr ^{\frac{q}{p}-1}\sum \limits _{i=1}^r x_i\le
2^{-q}\left(\sum \limits _{i=1}^r |x_i|^p\right)^{q/p}.
$$
Actually, there exists $t\ge t_0(p, \, q)$ such that $\sum \limits
_{i=1}^r |x_i|^p=t\cdot r$. By H\"{o}lder inequality, for enough
large $t_0(p, \, q)$ we have
$$
2r ^{q/p}-qr ^{\frac{q}{p}-1}\sum \limits _{i=1}^r x_i\le 2r
^{q/p}+qr ^{\frac{q}{p}-1}r ^{1-\frac 1p}t^{1/p}r ^{1/p}=
$$
$$
= r ^{q/p}(2+qt^{1/p})\le 2^{-q}t^{q/p}r ^{q/p}.
$$
\end{proof}
\begin{Lem}
\label{lem_interp} Let $n\in \N$, $(a_j)_{j=1}^n\subset \R_+$,
$(b_j)_{j=1}^n\subset \R_+$, ${\bf v}=(v_1, \, v_2)\in [2, \,
+\infty)^2$, $v_1\le v_2$, let $\lambda({\bf v})$ be defined by
formula similar to (\ref{lamb_p}) and $0<\lambda\le \lambda({\bf
v})$. Then
\begin{align}
\label{ajbjlambdap_lem} \left(\sum \limits _{j=1}^n
a_j^{v_1'\lambda}b_j^{v_1'(1-\lambda)}\right)^{1/v_1'}\le
\left(\sum \limits _{j=1}^n a_j^2\right)^{\lambda /2}\left(\sum
\limits _{j=1}^n b_j^{v_2'}\right)^{(1-\lambda)/v_2'}.
\end{align}
\end{Lem}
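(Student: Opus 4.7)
The plan is to reduce to a Hölder-type inequality between two probability vectors. First, by positive homogeneity in $(a_j)$ and $(b_j)$ separately, I would normalize so that $A:=\sum_j a_j^2=1$ and $B:=\sum_j b_j^{v_2'}=1$; in particular, $a_j,b_j\in[0,1]$. Raising the target inequality to the $v_1'$-th power and setting $\alpha_j:=a_j^2$, $\beta_j:=b_j^{v_2'}$ turns it into
\[
\sum_{j=1}^n \alpha_j^{u}\,\beta_j^{w}\le 1,\qquad u:=\tfrac{v_1'\lambda}{2},\ \ w:=\tfrac{v_1'(1-\lambda)}{v_2'},
\]
where $(\alpha_j),(\beta_j)$ are probability distributions on $\{1,\dots,n\}$ with $\alpha_j,\beta_j\in[0,1]$, and $u,w\ge 0$.

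The crucial algebraic step is to show that the condition $\lambda\le\lambda({\bf v})$ is exactly equivalent to $u+w\ge 1$. A short computation gives
\[
u+w=v_1'\!\left(\tfrac{\lambda}{2}+\tfrac{1-\lambda}{v_2'}\right),
\]
and rearranging $u+w\ge 1$ (while keeping track of the sign of $\tfrac12-\tfrac{1}{v_2'}\le 0$, which flips the inequality) reduces to $\lambda\le \tfrac{1/v_1-1/v_2}{1/2-1/v_2}$, i.e.\ precisely the defining expression for $\lambda({\bf v})$ in~(\ref{lamb_p}). I would handle the degenerate boundary cases ($v_2=2$, where the convention $\lambda({\bf v})=1$ is used; and $\lambda=1$, which forces $v_1=2$) separately by direct verification: both reduce to trivial equalities.

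Given $u+w\ge 1$, I would then finish by a two-line Hölder argument. Set $u':=u/(u+w)$, $w':=w/(u+w)$, so $u'+w'=1$ and $u'\le u$, $w'\le w$. Since $\alpha_j,\beta_j\in[0,1]$, lowering the exponents enlarges the term: $\alpha_j^u\beta_j^w\le\alpha_j^{u'}\beta_j^{w'}$. Now Hölder with conjugate exponents $1/u'$ and $1/w'$ yields
\[
\sum_j \alpha_j^{u'}\beta_j^{w'}\le \Bigl(\sum_j\alpha_j\Bigr)^{u'}\Bigl(\sum_j\beta_j\Bigr)^{w'}=1,
\]
which completes the proof. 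The only nontrivial point I anticipate is the algebraic identification of $\lambda({\bf v})$ with the threshold $u+w=1$ across all parameter regimes (including the $v_1=2$, $v_2=2$ and $v_1=v_2$ corner cases, where $\lambda^*$ is defined by convention); once that book-keeping is done, the analytic content is essentially the observation that powers $\ge 1$ shrink vectors whose $\ell^1$ norm is $1$.
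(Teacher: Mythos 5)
Your proof is correct and is essentially the paper's own argument in normalized form: both proofs hinge on the identity that $\lambda\le\lambda({\bf v})$ is equivalent to $u+w=v_1'\bigl(\tfrac{\lambda}{2}+\tfrac{1-\lambda}{v_2'}\bigr)\ge 1$, followed by a single application of H\"older together with the monotonicity of $\ell^p$-quantities in the exponent (the paper phrases this as $\|\cdot\|_{\ell^{v_1'(s+t)}}\le\|\cdot\|_{\ell^1}$, you as lowering exponents of numbers in $[0,1]$).
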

\begin{proof}
Let $s=\lambda /2$, $t=(1-\lambda)/v_2'$,
$y_j=a_j^{\frac{2s}{s+t}}$, $z_j=b_j^{\frac{v_2't}{s+t}}$. Then
(\ref{ajbjlambdap_lem}) is equivalent to the following inequality:
$$
\left(\sum \limits _{j=1}^n y_j^{v_1'(s+t)}z_j^{v_1'(s+t)}\right)
^{\frac{1}{v_1'(s+t)}}\le \left(\sum \limits _{j=1}^n
y_j^{\frac{s+t}{s}}\right)^{\frac{s}{s+t}}\left(\sum \limits
_{j=1}^n z_j^{\frac{s+t}{t}}\right)^{\frac{t}{s+t}}.
$$
By the H\"{o}lder inequality, the right-hand side is minorized by
$\sum \limits _{j=1}^n y_jz_j$. Therefore it is enough to show
that $v_1'(s+t)\ge 1$. Indeed,
$$
v_1'\left(\frac{\lambda}{2}+\frac{1-\lambda}{v_2'}\right)\ge 1
$$
is equivalent to the inequality $\lambda\left(\frac
12-\frac{1}{v_2}\right)\le \frac{1}{v_1}-\frac{1}{v_2}$, which
follows from conditions $v_2\ge v_1\ge 2$ and $\lambda\le
\lambda({\bf v})$.
\end{proof}
\begin{Trm}
\label{fin_dim_mix_norm} Let $1\le p_1\le p_2<\infty$, $1\le
q_1\le q_2<\infty$, $p_2\ge 2$, $q_2\ge 2$. Then there exists
$a=a(p_2, \, q_2)>0$ such that for any $m$, $k$,
$n\in \N$ satisfying the condition $n\le amk$ the following
estimate holds:
$$
d_n(B_{p_1,q_1}^{m,k}, \,
l_{p_2,q_2}^{m,k})\underset{p_1,p_2,q_1,q_2}{\asymp}\Phi_0=\Phi
_0(m, \, k, \, n, \, p_1, \, p_2, \, q_1, \, q_2),
$$
where
$$
\Phi _0=\min \left\{ 1, \, n^{-1/2}m^{1/p_2}k^{1/q_2}\right\},
\text{ if }p_1\le 2, \; q_1\le 2,
$$
$$
\Phi _0= \min \left\{1, \,
(n^{-1/2}m^{1/p_2}k^{1/q_2})^{\lambda({\bf p})}, \,
m^{\frac{1}{p_2}-\frac{1}{p_1}}\left(n^{-1/2}m^{1/2}k^{1/q_2}\right)^{\lambda({\bf
q})} \right\}
$$
if $\lambda({\bf p})\le \lambda({\bf q})$ and $\lambda({\bf p})<1$,
$$
\Phi _0= \min \left\{1, \, (n^{-1/2}m^{1/p_2}k^{1/q_2})^{\lambda({\bf q})},
\,k^{\frac{1}{q_2}-\frac{1}{q_1}}
\left(n^{-1/2}m^{1/{p_2}}k^{1/2}\right)^{\lambda({\bf p})} \right\}
$$
if $\lambda({\bf p})\ge \lambda({\bf q})$ and
$\lambda({\bf q})<1$.
\end{Trm}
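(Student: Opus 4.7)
The plan is to prove matching upper and lower bounds for $d_n(B_{p_1,q_1}^{m,k}, l_{p_2,q_2}^{m,k})$ by extending Gluskin's Theorem~\ref{teor_glus} from the single-index to the mixed-norm setting, splitting the argument into an upper estimate (direct approximation plus interpolation) and a lower estimate (an adaptation of Gluskin's random-polyhedron argument).

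For the upper bound, I would first handle the base regime $p_1,q_1\le 2$. Here the set inclusion $B_{p_1,q_1}^{m,k}\subset B_{2,2}^{m,k}=B_2^{mk}$ reduces the question to estimating $d_n(B_2^{mk}, l_{p_2,q_2}^{m,k})$. I would build the approximating subspace column by column as $L=L_1\oplus\cdots\oplus L_k$, each $L_j\subset\R^m$ being an optimal subspace from Theorem~\ref{teor_glus} for $d_{n_j}(B_2^m,l_{p_2}^m)\asymp\min\{1,n_j^{-1/2}m^{1/p_2}\}$ with $\sum_j n_j\le n$. The mixed-norm error then factors as $\bigl(\sum_j\mathrm{dist}_{l_{p_2}^m}(x_{\cdot j},L_j)^{q_2}\bigr)^{1/q_2}$, and the three terms inside the $\min$ defining $\Phi_0$ emerge from three natural ways to allocate the $n_j$'s and to apply H\"older in $j$. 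For the remaining cases $p_1>2$ or $q_1>2$, I would apply the interpolation Lemma~\ref{theta} between $X_0=l_{2,2}^{m,k}$ (the case just treated) and $X_1=l_{\infty,\infty}^{m,k}$ (trivial bound $1$), checking the dual-norm hypothesis via Lemma~\ref{lem_interp}; this inserts the exponents $\lambda({\bf p})$ and $\lambda({\bf q})$ in exactly the required places.

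For the lower bound I would extend Gluskin's volume/polyhedron argument from \cite{gluskin1,bib_gluskin} to the mixed-norm setting. For each term in $\Phi_0$, I would exhibit a well-separated finite subset of $B_{p_1,q_1}^{m,k}$ that no $n$-dimensional subspace of $l_{p_2,q_2}^{m,k}$ can $\varepsilon$-approximate: unit coordinate vectors produce the trivial bound $1$; vectors concentrated on a single column yield the term involving $m^{1/p_2-1/p_1}$; and random $\pm 1$-vertices of a Gluskin polytope, suitably normalized for the mixed norm, yield the term with $(n^{-1/2}m^{1/p_2}k^{1/q_2})^{\lambda({\bf p})}$. Finite-dimensional duality $d_n(B_X,Y)\asymp d^n(B_{Y^*},X^*)$ would let me reduce the hardest of these to Gelfand-width estimates already carried out in \cite{bib_gluskin}.

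The main obstacle will be obtaining the sharp factor $k^{1/q_2}$, rather than $k^{1/2}$, in the column-wise upper bound when $p_1,q_1\le 2$. A uniform split $n_j=n/k$ combined with the crude estimate $\bigl(\sum_j\|x_{\cdot j}\|_2^{q_2}\bigr)^{1/q_2}\le\|x\|_2$ (valid for $q_2\ge 2$) only gives $n^{-1/2}k^{1/2}m^{1/p_2}$, which is too weak. Improving $k^{1/2}$ to $k^{1/q_2}$ will require either a stratified, non-uniform allocation of $n_j$ tied to the column energies $\|x_{\cdot j}\|_2$, or a global Gluskin polyhedron adapted to the mixed norm; Lemma~\ref{lem_interp} supplies the precise H\"older-type inequality needed to exchange a factor of $2$ for a factor of $\min(p_2,q_2)$-type exponents at the right step. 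Once this base case is sharp, the interpolation Lemma~\ref{theta} propagates it cleanly to all the parameter regimes listed in the statement.
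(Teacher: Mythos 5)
Your overall architecture (a sharp base estimate for $B^{m,k}_{2,2}$, interpolation via Lemma~\ref{theta} to insert $\lambda({\bf p})$ and $\lambda({\bf q})$, and a Gluskin-type polytope for the lower bound) matches the paper's, but the proposal has a genuine unresolved gap exactly where you flag it, plus a misapplied lemma. The base case $d_n(B^{m,k}_{2,2},\,l^{m,k}_{p_2,q_2})\lesssim n^{-1/2}m^{1/p_2}k^{1/q_2}$ is \emph{not} obtained in the paper by a column-by-column subspace $L_1\oplus\dots\oplus L_k$; as you observe, any fixed allocation of dimensions $n_j$ fails because the worst-case $x$ concentrates on the column with the smallest $n_j$, and an adaptive allocation is not a linear subspace. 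The paper's resolution is a one-line trick you do not find: take $s=\max\{p_2,q_2\}$, use the known single-index bound $d_n(B_2^{mk},l_s^{mk})\lesssim n^{-1/2}(mk)^{1/s}$ from Theorem~\ref{teor_glus}, and combine it with the elementary norm comparison $\|x\|_{l^{m,k}_{p_2,q_2}}\le m^{1/p_2-1/s}k^{1/q_2-1/s}\|x\|_{l^{mk}_s}$, which yields exactly $n^{-1/2}m^{1/p_2}k^{1/q_2}$. Without this (or an equivalent device) your upper bound never gets off the ground. Separately, your interpolation endpoint is wrong: Lemma~\ref{theta} bounds $d_n(B_{X_\theta},X_1)$ by a power of $d_n(B_{X_0},X_1)$, so both widths live in the \emph{same} ambient space $X_1$, which must be the target $l^{m,k}_{p_2,q_2}$ (with $X_\theta=l^{m,k}_{p_1,q_1}$, $\theta=1-\lambda({\bf p})$), not $l^{m,k}_{\infty,\infty}$; interpolating toward $l^{m,k}_{\infty,\infty}$ would only bound a width in the wrong norm. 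You also omit how the third term of $\Phi_0$ arises on the upper side: the paper gets it by factoring through an intermediate exponent $\sigma\in[2,p_2]$ chosen so that $\frac{1/\sigma-1/p_2}{1/2-1/p_2}=\lambda({\bf q})$, at the cost of $m^{1/\sigma-1/p_1}$.

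On the lower bound, your plan to dualize to Gelfand widths "already carried out in \cite{bib_gluskin}" does not close the argument: duality turns $d_n(B^{m,k}_{p_1,q_1},l^{m,k}_{p_2,q_2})$ into a Gelfand width between \emph{mixed-norm} spaces, which is not covered by the single-index results of \cite{bib_gluskin}. The paper instead constructs the two-parameter polytope $V^{m,k}_{r,l}$ (orbit of the indicator of an $r\times l$ block under signed row and column permutations) and runs the averaging argument from scratch in the mixed norm, using the convexity inequality of Proposition~\ref{1xicrho} applied blockwise and the orthogonality of the functions $\varphi_{ij}$ on the group $G$; the whole family of lower bounds in $\Phi_0$ then comes from optimizing over $(r,l)$ subject to $n\le a\,m^{2/p_2}k^{2/q_2}r^{1-2/p_2}l^{1-2/q_2}$. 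Your "single column" and "full random polytope" test sets are the special cases $l=1$ and $r=m,\ l=k$, but the intermediate choices of $(r,l)$, and the averaging estimate itself, are the substance of the proof and are missing from the proposal.
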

For convenience of the reader we give a list of special cases
where the formula for $\Phi_0$ can be simplified. If $n\le
m^{2/p_2}k^{2/q_2}$ then $\Phi _0=1$; if $\lambda({\bf p})\le
\lambda({\bf q})$, $p_1>2$ and $m^{2/p_2}k^{2/q_2}\le n\le
mk^{2/q_2}$ then $\Phi _0=(n^{-1/2}m^{1/p_2}k^{1/q_2})
^{\lambda({\bf p})}$; if $\lambda({\bf p})\le \lambda({\bf q})$,
$p_1>2$ and $mk^{2/q_2}\le n\le mk$ then $\Phi _0=
m^{\frac{1}{p_2}-\frac{1}{p_1}}\left(n^{-1/2}m^{1/2}k^{1/q_2}\right)^{\lambda({\bf
q})}$; if $\lambda({\bf p})\ge \lambda({\bf q})$, $q_1>2$ and
$m^{2/p_2}k^{2/q_2}\le n\le km^{2/p_2}$ then $\Phi
_0=(n^{-1/2}m^{1/p_2}k^{1/q_2})^{\lambda({\bf q})}$; if
$\lambda({\bf p})\ge \lambda({\bf q})$, $q_1>2$ and $km^{2/p_2}\le
n\le mk$ then $\Phi _0=k^{\frac{1}{q_2}-\frac{1}{q_1}}
\left(n^{-1/2}m^{1/{p_2}}k^{1/2}\right)^{\lambda({\bf p})}$.
\begin{proof}
{\it Proof of the upper estimate.} Since $p_1\le p_2$,
$q_1\le q_2$, we have
\begin{align}
\label{dnbp1q1le1} d_n(B_{p_1,q_1}^{m,k}, \, l_{p_2,q_2}^{m,k})\le
1.
\end{align}
Let us prove other inequalities of the upper estimate.

Note that
$$
d_n(B_{p_1,q_1}^{m,k}, \, l_{p_2,q_2}^{m,k})\le d_n(B_{\max(p_1,
2),\max(q_1,2)}^{m,k}, \, l_{p_2,q_2}^{m,k}).
$$
Thus we may assume that $p_1\ge 2$, $q_1\ge 2$ and
$\lambda({\bf p})=\frac{\frac{1}{p_1}-\frac{1}{p_2}}{\frac
12-\frac{1}{p_2}}$, $\lambda({\bf
q})=\frac{\frac{1}{q_1}-\frac{1}{q_2}}{\frac 12-\frac{1}{q_2}}$.

First consider the case $p_1=q_1=2$. Let $s=\max \{p_2, \,
q_2\}$. By Theorem \ref{teor_glus}, we have
$$
d_n(B^{mk}_2, \,
l^{mk}_s)\underset{s}{\lesssim}n^{-1/2}(mk)^{1/s}.
$$
This, together with the inequality
$$
\| x\| _{l^{m,k}_{p_2, \, q_2}}\le
m^{\frac{1}{p_2}-\frac{1}{s}}k^{\frac{1}{q_2}-\frac 1s}\| x\|
_{l^{mk}_{s}},
$$
implies the estimate
\begin{align}
\label{dnb2pl2q} d_n(B^{m,k}_{2,2}, \,
l^{m,k}_{p_2,q_2})\underset{p_2,q_2}{\lesssim}n^{-1/2}m^{1/p_2}k^{1/q_2}.
\end{align}

By (\ref{lamp1p2}), if $\min(\lambda({\bf p}), \,
\lambda({\bf q}))=1$ then $p_1=2$, $q_1=2$ and the assertion follows from
(\ref{dnbp1q1le1}) and (\ref{dnb2pl2q}).

Let us consider the case $\min(\lambda({\bf p}), \, \lambda({\bf q}))<1$.
First we prove the inequalities
\begin{align}
\label{dn_est1} d_n(B_{p_1,q_1}^{m,k}, \,
l_{p_2,q_2}^{m,k})\underset{p_1,q_1,p_2,q_2}{\lesssim}\left(n^{-1/2}m^{1/p_2}
k^{1/q_2}\right)^{\lambda({\bf p})}, \;\; \lambda({\bf p})\le
\lambda({\bf q}),\;\;\lambda({\bf p})<1,
\end{align}
\begin{align}
\label{dn_est2} d_n(B_{p_1,q_1}^{m,k}, \,
l_{p_2,q_2}^{m,k})\underset{p_1,q_1,p_2,q_2}{\lesssim}\left(n^{-1/2}m^{1/p_2}
k^{1/q_2}\right)^{\lambda({\bf q})}, \;\; \lambda({\bf p})\ge
\lambda({\bf q}), \;\;\lambda({\bf q})<1.
\end{align}

Let $\lambda({\bf p})\le \lambda({\bf q})$, $\lambda({\bf
p})<1$. We shall apply Lemma \ref{theta} for $\theta =1-\lambda({\bf
p})$, $X_0=l^{m,k}_{2,2}$, $X_1=l^{m,k}_{p_2,q_2}$, $X_\theta
=l^{m,k}_{p_1,q_1}$. If
\begin{align}
\label{xp1q1x22xp2q2} \| x\| _{l^{m,k}_{p_1',q_1'}}\le \| x\|
_{l^{m,k}_{2,2}}^{\lambda({\bf p})}\| x\|
_{l^{m,k}_{p_2',q_2'}}^{1-\lambda({\bf p})},
\end{align}
then (\ref{dn_est1}) follows from (\ref{interp_lem}) and
(\ref{dnb2pl2q}).

We claim that (\ref{xp1q1x22xp2q2}) holds. Using Lemma
\ref{lem_interp} for $n:=m$, $v_1:=p_1$, $v_2:=p_2$,
$\lambda:=\lambda({\bf p})$, $a_i:=b_i:=x_{ij}$, $1\le i\le m$, we
get
\begin{align}
\label{int2} \left(\sum \limits _{j=1}^k\left(\sum \limits
_{i=1}^m |x_{ij}|^{p_1'}\right)^{q_1'/p_1'}\right)^{1/q_1'} \le
\left(\sum \limits _{j=1}^k\left(\sum \limits _{i=1}^m
|x_{ij}|^{2}\right)^{q_1'\lambda({\bf p})/2}\left(\sum \limits
_{i=1}^m |x_{ij}|^{p_2'}\right)^{q_1'(1-\lambda({\bf
p}))/p_2'}\right)^{1/q_1'}.
\end{align}
Now we denote
\begin{align}
\label{def_of_ajbj} a_j:=\left(\sum \limits _{i=1}^m
|x_{ij}|^2\right)^{1/2}, \;\; b_j:=\left(\sum \limits _{i=1}^m
|x_{ij}|^{p_2'}\right)^{1/p_2'}, \;\; 1\le j\le k.
\end{align}
Taking into account Lemma \ref{lem_interp} for $n:=k$, $v_1:=q_1$,
$v_2:=q_2$, $\lambda :=\lambda({\bf p})$, we obtain
$$
\left(\sum \limits _{j=1}^k a_j^{q_1'\lambda({\bf
p})}b_j^{q_1'(1-\lambda({\bf p}))}\right)^{1/q_1'}\le \left(\sum
\limits _{j=1}^k a_j^2\right)^{\lambda ({\bf p})/2}\left(\sum
\limits _{j=1}^k b_j^{q_2'}\right)^{(1-\lambda({\bf p}))/q_2'}.
$$
This, together with (\ref{int2}), implies (\ref{xp1q1x22xp2q2}).

Let us prove (\ref{dn_est2}). By Lemma \ref{theta},
it is enough to check the inequality
\begin{align}
\label{nxp1sq1smklq}
\| x\| _{l^{m,k}_{p_1',q_1'}}\le \| x\|
_{l^{m,k}_{2,2}}^{\lambda({\bf q})}\|x\|
_{l^{m,k}_{p_2',q_2'}}^{1-\lambda({\bf q})}.
\end{align}
Using Lemma \ref{lem_interp} with $n:=m$, $v_1:=p_1$, $v_2:=p_2$,
$\lambda:=\lambda({\bf q})$, $a_i:=b_i:=x_{ij}$, $1\le i\le m$, we
obtain that
\begin{align}
\label{twsev} \left(\sum \limits _{j=1}^k\left(\sum \limits
_{i=1}^m |x_{ij}|^{p_1'}\right)^{q_1'/p_1'}\right)^{1/q_1'} \le
\left(\sum \limits _{j=1}^k\left(\sum \limits _{i=1}^m
|x_{ij}|^{2}\right)^{q_1'\lambda({\bf q})/2}\left(\sum \limits
_{i=1}^m |x_{ij}|^{p_2'}\right)^{q_1'(1-\lambda({\bf
q}))/p_2'}\right)^{1/q_1'}.
\end{align}
Now we define $(a_j)_{j=1}^k$, $(b_j)_{j=1}^k$ by the formula
(\ref{def_of_ajbj}). Taking into account Lemma  \ref{lem_interp}
for $n:=k$, $v_1:=q_1$, $v_2:=q_2$, $\lambda :=\lambda({\bf q})$,
we get
$$
\left(\sum \limits _{j=1}^k a_j^{q_1'\lambda({\bf
q})}b_j^{q_1'(1-\lambda({\bf q}))}\right)^{1/q_1'}\le \left(\sum
\limits _{j=1}^k a_j^2\right)^{\lambda({\bf q})/2}\left(\sum
\limits _{j=1}^k b_j^{q_2'}\right)^{(1-\lambda({\bf q}))/q_2'}.
$$
This, together with (\ref{def_of_ajbj}) and (\ref{twsev}), implies
(\ref{nxp1sq1smklq}).

Let us prove inequalities
$$
d_n(B_{p_1,q_1}^{m,k}, \,
l_{p_2,q_2}^{m,k})\underset{p_1,q_1,p_2,q_2}{\lesssim}n^{-\frac{\lambda({\bf
q})}{2}}k^{\frac{\lambda({\bf
q})}{q_2}}m^{\frac{1}{p_2}-\frac{1}{p_1}+\frac{\lambda({\bf
q})}{2}}, \text{ if } \lambda({\bf p})\le \lambda({\bf q}), \;\;
p_1>2,
$$
\begin{align}
\label{dn_est4} d_n(B_{p_1,q_1}^{m,k}, \,
l_{p_2,q_2}^{m,k})\underset{p_1,q_1,p_2,q_2}{\lesssim}
n^{-\frac{\lambda({\bf p})}{2}}m^{\frac{\lambda({\bf
p})}{p_2}}k^{\frac{1}{q_2}-\frac{1}{q_1}+\frac{\lambda({\bf
p})}{2}}, \text{ if } \lambda({\bf p})\ge \lambda({\bf q}), \;\;
q_1>2.
\end{align}
Let $\lambda({\bf p})\le \lambda({\bf q})$. We choose $\sigma \in
[2, \, p_2]$ such that
$$
\frac{\frac{1}{\sigma}-\frac{1}{p_2}}{\frac{1}{2}-\frac{1}{p_2}}=\lambda({\bf
q}).
$$
Since $p_1>2$, it follows that $\sigma \le p_1$ and
$$
d_n(B_{p_1,q_1}^{m,k}, \, l_{p_2,q_2}^{m,k})\le
m^{\frac{1}{\sigma}-\frac{1}{p_1}}d_n(B_{\sigma,q_1}^{m,k}, \,
l_{p_2,q_2}^{m,k})\stackrel{(\ref{dnb2pl2q}),(\ref{dn_est2})}{\underset{p_1,q_1,p_2,q_2}{\lesssim}}
$$
$$
\lesssim m^{\frac{1}{p_2}+\lambda({\bf
q})\left(\frac{1}{2}-\frac{1}{p_2}\right)-\frac{1}{p_1}}\left(n^{-1/2}m^{1/p_2}
k^{1/q_2}\right)^{\lambda({\bf q})}=n^{-\frac{\lambda({\bf
q})}{2}}k^{\frac{\lambda({\bf
q})}{q_2}}m^{\frac{1}{p_2}-\frac{1}{p_1}+\frac{\lambda({\bf
q})}{2}}.
$$
The proof of (\ref{dn_est4}) is similar.

{\it Proof of the lower estimate.} We use the method from the
article \cite{gluskin1}. Let $r \in \{1, \, \dots, \, m\}$, $l\in
\{1, \, \dots, \, k\}$, let $S_m$, $S_k$ be groups of permutations
of $m$ and $k$ elements respectively,
$$
E_m=\{(\xi_1, \, \dots, \, \xi_m)\in \R^m:\xi_i=\pm 1, \; 1\le
i\le m\},
$$
$$
E_k=\{(\xi_1, \, \dots, \, \xi_k)\in \R^k:\xi_i=\pm 1, \; 1\le
i\le k\}.
$$
By $G$ denote the set
$$
\{(\sigma _1, \, \sigma _2, \, \varepsilon _1, \, \varepsilon
_2):\sigma _1\in S_m, \, \sigma _2\in S_k, \, \varepsilon _1\in
E_m, \, \varepsilon _2\in E_k\}.
$$
For $x=(x_{i,j})_{1\le i\le m, \, 1\le j\le k}$, $\gamma=(\sigma
_1, \, \sigma _2, \, \varepsilon _1, \, \varepsilon _2)\in G$,
$\varepsilon _1=(\varepsilon _{1,1}, \, \dots, \, \varepsilon
_{1,m})$, $\varepsilon _2=(\varepsilon _{2,1}, \, \dots, \,
\varepsilon _{2,k})$ we define $\gamma(x)=(\varepsilon
_{1,i}\varepsilon _{2,j}x_{\sigma _1(i),\sigma _2(j)})_{1\le i\le
m,1\le j\le k}$.

Let $1\le i\le m$, $1\le j\le k$,
$$
e^{m,k,r,l}_{i,j}=\left\{ \begin{array}{l} 1, \text{ if
 }1\le i\le r \text{ and } 1\le j\le l, \\ 0,\text{ in other cases}.\end{array}\right.
$$
Denote $e=(e^{m,k,r,l}_{i,j})_{1\le i\le m,1\le j\le k}$,
$$
V^{m,k}_{r ,l}={\rm conv}\, \{\gamma(e):\gamma\in G\}.
$$
We claim that there exist $a=a(p_2, \, q_2)>0$ and $b=b(p_2, \,
q_2)>0$ such that for $n\le a m^{2/p_2}k^{2/q_2}r
^{1-2/p_2}l^{1-2/q_2}$ the following inequality holds:
\begin{align}
\label{low_est_polytope} d_n(V^{m,k}_{r , l}, \,
l_{p_2,q_2}^{m,k})\ge br ^{1/p_2}l^{1/q_2}.
\end{align}
Here we may assume that $a(p_2, \, q_2)$ decreases in $p_2$ and $q_2$ and
$b(p_2, \, q_2)$ is continuous.

Let $Y\subset l_{p_2,q_2}^{m,k}$ be a subspace of dimension not
exceeding $n$. For any $\gamma\in G$ we denote by
$y^\gamma=(y^\gamma_{i,j})_{1\le i\le m,1\le j\le k}$ the element
of $Y$ which is nearest to $\gamma(e)$. Let us get the lower
estimate for $\max _{\gamma\in G}\|
\gamma(e)-y^\gamma\|_{l^{m,k}_{p_2,q_2}}$.

We claim that there exists $\tilde c_1(p_2,\, q_2)>0$
(continuously depending on $p_2$ and $q_2$) such that
\begin{align}
\label{sj1ksi1mge}
\begin{array}{c}
\displaystyle \sum \limits _{j=1}^k \left(\sum \limits _{i=1}^m
|\gamma(e)_{i,j}-y^\gamma _{i,j}|^{p_2}\right)^{q_2/p_2}\ge
\frac{lr ^{q_2/p_2}}{4}+
\\
+\tilde c_1(p_2, \, q_2)\sum \limits _{j=1}^k\left(\sum \limits
_{i=1}^m |y^\gamma_{i,j}|^{p_2}\right)^{q_2/p_2}-\frac{q_2}{2}r
^{\frac{q_2}{p_2}-1}\sum \limits _{j=1}^k \sum \limits _{i=1}^m
\gamma(e)_{i,j}y^\gamma_{i,j}.
\end{array}
\end{align}

Indeed, let
$$
J_1^\gamma=\{j\in \overline{1, \, k}: \forall i\in\overline{1, \,
m}\;\;\; \gamma(e)_{i,j}=0\}, \;\;\; J_2^\gamma=\{1, \, \dots , \,
k\}\backslash J_1^\gamma;
$$
for $j\in J_2^\gamma$ we put
$$
I_1^\gamma=\{i\in\overline{1, \, m}:\gamma(e)_{i,j}=0\}, \;\;\;
I_2^\gamma=\{1, \, \dots , \, m\}\backslash I_1^\gamma.
$$
Then Proposition \ref{1xicrho} together with the equality ${\rm
card}\, I^\gamma_2=r$ and the inequality
\begin{align}
\label{th} (\xi+\eta)^\theta\ge\frac{\xi^\theta+\eta^\theta}{2},
\;\; \xi, \; \eta\in\R_+, \;\; \theta>0,
\end{align}
imply
$$
\sum \limits _{j=1}^k \left(\sum \limits _{i=1}^m
|\gamma(e)_{i,j}-y^\gamma _{i,j}|^{p_2}\right)^{q_2/p_2}=\sum
\limits _{j\in J_1^\gamma} \left(\sum \limits
_{i=1}^m|y^\gamma_{i,j}|^{p_2}\right)^{q_2/p_2}+
$$
$$
+\sum \limits _{j\in J_2^\gamma}\left(\sum \limits _{i\in
I_1^\gamma}|y^\gamma_{i,j}|^{p_2}+ \sum \limits _{i\in
I_2^\gamma}|1-\gamma(e)_{i,j}y^\gamma_{i,j}|^{p_2}\right)^{q_2/p_2}\stackrel{(\ref{1xicrho1}),
(\ref{th})}{\ge}
$$
$$
\ge \sum \limits _{j\in J_1^\gamma}\left(\sum \limits _{i=1}^m
|y^\gamma_{i,j}|^{p_2}\right)^{q_2/p_2}+ \frac 12\sum \limits
_{j\in J_2^\gamma}\left( \sum \limits _{i\in
I_1^\gamma}|y^\gamma_{i,j}|^{p_2}\right)^{q_2/p_2}+ \frac 14\sum
\limits _{j\in J_2^\gamma}r ^{q_2/p_2}+
$$
$$
+\frac 12 \sum \limits _{j\in J_2^\gamma}c_1(p_2,\, q_2)
\left(\sum \limits _{i\in
I_2^\gamma}|y^\gamma_{i,j}|^{p_2}\right)^{q_2/p_2}- \frac{q_2}{2}r
^{\frac{q_2}{p_2}-1}\sum \limits _{j\in J_2^\gamma}\sum \limits
_{i\in I_2^\gamma} \gamma(e)_{i,j}y^\gamma_{i,j}.
$$
It remains to apply the equality ${\rm card}\, J_2^\gamma=l$.

Averaging both sides of the inequality (\ref{sj1ksi1mge}) over
$\gamma\in G$, we get
\begin{align}
\label{low1}
\begin{array}{c}
\max _{\gamma\in G}\| \gamma(e)-y^\gamma\|
_{l^{m,k}_{p_2,q_2}}^{q_2}\ge |G|^{-1}\sum \limits _{\gamma\in
G}\| \gamma(e)-y^\gamma\|
_{l^{m,k}_{p_2,q_2}}^{q_2} \ge\frac{lr ^{q_2/p_2}}{4}+ \\
+\tilde c_1(p_2, \, q_2)|G|^{-1}\sum \limits _{\gamma\in G}\sum
\limits _{j=1}^k\left(\sum \limits _{i=1}^m
|y^\gamma_{i,j}|^{p_2}\right)^{q_2/p_2}-\frac{q_2}{2}|G|^{-1}r
^{\frac{q_2}{p_2}-1}\sum \limits _{\gamma\in G}\sum \limits
_{j=1}^k \sum \limits _{i=1}^m \gamma(e)_{i,j}y^\gamma_{i,j}.
\end{array}
\end{align}
Let us obtain the upper estimate of module of the last summand.
Consider the space $l_2(G)=\{\varphi :G\rightarrow \R\}$ with
inner product
$$
\langle \varphi , \, \psi\rangle =|G|^{-1}\sum \limits _{\gamma\in
G}\varphi(\gamma)\psi(\gamma).
$$
For $1\le i\le m$, $1\le j\le k$, $\gamma\in G$ we denote $\varphi
_{ij}(\gamma)=\gamma(e)_{i,j}$, $z_{ij}(\gamma)=y^\gamma_{i,j}$,
$L={\rm span}\, \{z_{ij}\}_{1\le i\le m, \, 1\le j\le k}\subset
l_2(G)$. Then $\dim L\le n$. Indeed, consider the matrix
$(z_s(\gamma))_{s\in S,\, \gamma\in G}$, where $S=\{s=(i,j):1\le
i\le m, \, 1\le j\le k\}$. Since $y^\gamma\in Y$, $\dim Y\le n$,
then the rank of this matrix does not exceed $n$; therefore, $\dim
L\le n$. Let $P$ be the orthogonal projector onto $L$. Then its
Hilbert -- Schmidt norm does not exceed $n^{1/2}$.

We claim that the vectors $\varphi _{ij}$ form an orthogonal
system and $\|\varphi _{ij}\| _{l_2(G)}^2=\frac{r l}{mk}$. Indeed,
let $(i, \, j)\ne (i', \, j')$. Check that $\sum \limits
_{\gamma\in G}\varphi _{ij}(\gamma)\varphi_{i'j'}(\gamma)=0$.
Suppose that $i\ne i'$ (the case $j\ne j'$ is considered
similarly). Let $\gamma=(\sigma _1, \, \sigma _2, \, \varepsilon
_1, \, \varepsilon _2)$ and $\varepsilon_{1,i'}=1$. We have
$\varphi _{ij}(\gamma)\varphi_{i'j'}(\gamma)=\varepsilon
_{1,i}\varepsilon_{2,j}\varepsilon _{1,i'}\varepsilon_{2,j'}
e^{m,k,r,l}_{\sigma _1(i),\sigma _2(j)}e^{m,k,r,l}_{\sigma
_1(i'),\sigma _2(j')}$. Put $\tilde \gamma=(\sigma _1, \, \sigma
_2, \, \tilde\varepsilon _1, \, \varepsilon _2)$; here $\tilde
\varepsilon _{1,s}=\varepsilon _{1,s}$ if $s\ne i'$, $\tilde
\varepsilon _{1,i'}=-1$. Since $i\ne i'$, then $\tilde
\varepsilon_{1,i}\tilde \varepsilon _{1,i'}=-\varepsilon_{1,i}
\varepsilon _{1,i'}$. Thus we have the orthogonality of the system
$(\varphi _{ij})$. Let us show that $|G|^{-1}\sum \limits
_{\gamma\in G}\varphi _{ij}^2(\gamma)=\frac{r l}{mk}$. Indeed,
$|G|=2^m\cdot 2^km!k!$,
$$
{\rm card}\, \{\gamma\in G:\varphi _{ij}(\gamma)=\pm 1\}=
$$
$$
=2^m\cdot 2^k{\rm card}\{\sigma _1\in S_m:\,\sigma _1(i)\le r\}
\cdot {\rm card}\, \{\sigma _2\in S_k:\,\sigma _2(j)\le
l\}=2^m\cdot 2^k r(m-1)!l(k-1)!.
$$

We have
$$
\left ||G|^{-1}\sum \limits _{\gamma\in G} \sum\limits _{j=1}^k
\sum \limits _{i=1}^m y^\gamma_{i,j}\gamma(e)_{i,j}\right |=\left
| \sum \limits _{j=1}^k\sum \limits _{i=1}^m \langle \varphi
_{ij}, \, z_{ij}\rangle\right |=
$$
$$
=\left |\sum \limits _{j=1}^k\sum \limits _{i=1}^m \langle
P\varphi _{ij}, \, z_{ij}\rangle\right |\le \left(
\sum \limits _{j=1}^k\sum \limits _{i=1}^m \|P\varphi
_{ij}\|^2_{l_2(G)}\right)^{1/2}\left(
\sum\limits _{j=1}^k \sum \limits _{i=1}^m\| z_{ij} \|
^2_{l_2(G)}\right)^{1/2}\le
$$
$$
\le n^{1/2}\left(\frac{r l}{mk}\right)^{1/2}\left( \sum \limits
_{j=1}^k\sum \limits _{i=1}^m \|z_{ij}\|^2_{l_2(G)}\right)^{1/2}=
n^{1/2}\left(\frac{r l}{mk}\right)^{1/2}\left(|G|^{-1}\sum \limits
_{\gamma\in G}\sum \limits _{j=1}^k\sum \limits _{i=1}^m
|y_{i,j}^\gamma|^2\right)^{1/2}\le
$$
$$
\le (nr l)^{1/2}m^{-1/p_2}k^{-1/q_2}\left(|G|^{-1}\sum \limits
_{\gamma\in G}\sum \limits _{j=1}^k \left(\sum \limits
_{i=1}^m|y_{i,j}^\gamma|^{p_2}\right)^{q_2/p_2}\right)^{1/q_2}=:M.
$$
Let $n=\tilde am^{2/p_2}k^{2/q_2}r^{1-2/p_2}l^{1-2/q_2}$. Applying the
inequality $\xi \eta \le |\xi| ^{q_2'}+|\eta |^{q_2}$ for $\xi =r
^{\frac{q_2-1}{p_2}}l^{\frac{q_2-1}{q_2}}$ and
$$
\eta =\left(|G|^{-1}\sum\limits _{\gamma\in G}\sum \limits
_{j=1}^k \left(\sum \limits
_{i=1}^m|y_{i,j}^\gamma|^{p_2}\right)^{q_2/p_2}\right)^{1/q_2},
$$
we get that
$$
M=\tilde a^{\frac{1}{2}}r
^{1-\frac{1}{p_2}}l^{1-\frac{1}{q_2}}\left(|G|^{-1}\sum \limits
_{\gamma\in G}\sum \limits _{j=1}^k \left(\sum \limits
_{i=1}^m|y_{i,j}^\gamma|^{p_2}\right)^{q_2/p_2}\right)^{1/q_2}\le
$$
$$
\le \tilde a^{\frac{1}{2}}r
^{1-\frac{q_2}{p_2}}\left(r^{\frac{q_2}{p_2}}l+|G|^{-1}\sum
\limits _{\gamma\in G}\sum \limits _{j=1}^k \left(\sum \limits
_{i=1}^m|y_{i,j}^\gamma|^{p_2}\right)^{q_2/p_2}\right).
$$
Combining this estimate and (\ref{low1}), we have
$$
\max _{\gamma\in G}\| \gamma(e)-y^\gamma\|
_{l^{m,k}_{p_2,q_2}}^{q_2}\ge \frac{lr
^{\frac{q_2}{p_2}}}{4}+\tilde c_1(p_2, \, q_2)|G|^{-1}\sum \limits
_{\gamma\in G}\sum \limits _{j=1}^k\left(\sum \limits _{i=1}^m
|y^\gamma_{i,j}|^{p_2}\right)^{q_2/p_2}-
$$
$$
-\frac{q_2}{2}\tilde a^{\frac{1}{2}}\left(r^{\frac{q_2}{p_2}}l+|G|^{-1}\sum
\limits _{\gamma\in G}\sum \limits _{j=1}^k \left(\sum \limits
_{i=1}^m|y_{i,j}^\gamma|^{p_2}\right)^{q_2/p_2}\right).
$$
Taking $\tilde a$ enough small, we get (\ref{low_est_polytope}). It remains
to put $a(p_2, \, q_2)=\min _{2\le p\le p_2, \, 2\le q\le q_2}\tilde a(p, \, q)$.

Let us estimate Kolmogorov widths of balls $B^{m,k}_{p_1,q_1}$.
Since $B^{m,k}_{p_1,q_1}\supset r
^{-1/p_1}l^{-1/q_1}V^{m,k}_{r,l}$, it follows that
\begin{align}
\label{dnbmkpq} d_n(B^{m,k}_{p_1,q_1}, \,
l^{m,k}_{p_2,q_2})\underset{p_2,q_2}{\gtrsim} r
^{\frac{1}{p_2}-\frac{1}{p_1}}l^{\frac{1}{q_2}-\frac{1}{q_1}}
\end{align}
for $n\le a m^{2/p_2}k^{2/q_2}r ^{1-2/p_2}l^{1-2/q_2}$, $a=a(p_2, \, q_2)$.
\begin{enumerate}
\item $n\le am^{2/p_2}k^{2/q_2}$. Taking $r =l=1$, we
have $$d_n(B^{m,k}_{p_1,q_1}, \,
l^{m,k}_{p_2,q_2})\underset{p_2,q_2}{\gtrsim}1.$$
\item $am^{2/p_2}k^{2/q_2}\le n\le amk$.
First choose $\tilde p\in [2, \, p_2]$ and $\tilde q\in [2, \,
q_2]$ such that $n=a(p_2, \, q_2)m^{2/\tilde p}k^{2/\tilde q}\le
a(\tilde p, \, \tilde q)m^{2/\tilde p}k^{2/\tilde q}$. Then
$$
d_n(B^{m,k}_{p_1,q_1}, \, l^{m,k}_{p_2,q_2})\ge d_n(V_{1,1}^{m,k},
\, l^{m,k}_{p_2,q_2})\ge m^{\frac{1}{p_2}-\frac{1}{\tilde
p}}k^{\frac{1}{q_2}-\frac{1}{\tilde q}}d_n(V_{1,1}^{m,k}, \,
l_{\tilde p,\tilde
q}^{m,k})\underset{p_2,q_2}{\gtrsim}n^{-1/2}m^{1/p_2}k^{1/q_2}
$$
(in the last order inequality we used that $\tilde p\in [2, \,
p_2]$, $\tilde q\in [2, \, q_2]$ and $b(\tilde p, \, \tilde q)$ is
continuous).

This estimate can be improved for $\lambda({\bf p})<1$ or
$\lambda({\bf q})<1$. Consider the following cases.
\begin{enumerate}
\item $n\le akm^{2/p_2}$, $q_1>2$. Let $r =1$,
$$
l=\left\lceil\left(a^{-1}nm^{-2/p_2}k^{-2/q_2}\right)^{\frac{1}{1-2/q_2}}\right\rceil.
$$
Then $1\le
\left(a^{-1}nm^{-2/p_2}k^{-2/q_2}\right)^{\frac{1}{1-2/q_2}}\le
k$, $1\le l\le k$, and
\begin{align}
\label{dnlq} d_n(B^{m,k}_{p_1,q_1}, \,
l^{m,k}_{p_2,q_2})\stackrel{(\ref{dnbmkpq})}{\underset{p_2,q_2}{\gtrsim}}\left(n^{-1/2}m^{1/p_2}k^{1/q_2}
\right)^{\lambda(\bf q)}.
\end{align}
\item $n\le amk^{2/q_2}$, $p_1>2$. Let $l=1$,
$$
r=\left\lceil\left(a^{-1}nm^{-2/p_2}k^{-2/q_2}\right)^{\frac{1}{1-2/p_2}}\right\rceil.
$$
Then $1\le
\left(a^{-1}nm^{-2/p_2}k^{-2/q_2}\right)^{\frac{1}{1-2/p_2}}\le
m$, $1\le r\le m$, and
\begin{align}
\label{dnlp} d_n(B^{m,k}_{p_1,q_1}, \,
l^{m,k}_{p_2,q_2})\stackrel{(\ref{dnbmkpq})}{\underset{p_2,q_2}{\gtrsim}}\left(n^{-1/2}m^{1/p_2}k^{1/q_2}
\right)^{\lambda(\bf p)}.
\end{align}
\item $amk\ge n>akm^{2/p_2}$. Let $p_1>2$. Take $l=k$,
$$
r =\left\lceil \left(a^{-1}nk^{-1}m^{-2/p_2}\right)
^{\frac{1}{1-2/p_2}}\right\rceil.
$$
Then $1\le \left(a^{-1}nk^{-1}m^{-2/p_2}\right)
^{\frac{1}{1-2/p_2}} \le m$, $1\le r\le m$, and we get
$$
d_n(B^{m,k}_{p_1,q_1}, \,
l^{m,k}_{p_2,q_2})\stackrel{(\ref{dnbmkpq})}{\underset{p_2,q_2}{\gtrsim}}
k^{\frac{1}{q_2}-\frac{1}{q_1}}\left(n^{-1/2}k^{1/2}
m^{1/p_2}\right)^{\lambda({\bf p})}.
$$
Let $p_1\le 2$ and $q_1>2$. Choose $\tilde p\in [2, \, p_2]$ such
that $n=a(p_2, \, q_2)km^{\frac{2}{\tilde p}}\le a(\tilde p, \,
q_2)km^{\frac{2}{\tilde p}}$. Then
$$
d_n(B^{m,k}_{p_1,q_1}, \, l^{m,k}_{p_2,q_2})\ge
m^{\frac{1}{p_2}-\frac{1}{\tilde p}}d_n(B^{m,k}_{p_1,q_1}, \,
l^{m,k}_{\tilde p,q_2})
\stackrel{(\ref{dnlq})}{\underset{p_2,q_2}{\gtrsim}}
m^{\frac{1}{p_2}-\frac{1}{\tilde p}}\left(n^{-1/2}m^{1/\tilde
p}k^{1/q_2} \right)^{\lambda(\bf
q)}\underset{p_2,q_2}{\gtrsim}$$$$\gtrsim
m^{\frac{1}{p_2}}\left(n^{\frac 12}k^{-\frac
12}\right)^{\lambda({\bf q})-1}n^{-\frac{\lambda({\bf
q})}{2}}k^{\frac{\lambda({\bf q})}{q_2}}=n^{-\frac
12}m^{\frac{1}{p_2}}k^{\frac 12+\frac{1}{q_2}-\frac{1}{q_1}}.
$$
\item $amk\ge n>amk^{2/q_2}$. Let $q_1>2$. Take $r=m$,
$$
l=\left\lceil \left(a^{-1}nm^{-1}k^{-2/q_2}\right)
^{\frac{1}{1-2/q_2}}\right\rceil.
$$
Then $1\le \left(a^{-1}nm^{-1}k^{-2/q_2}\right)
^{\frac{1}{1-2/q_2}} \le k$, $1\le l\le k$, and we have
$$
d_n(B^{m,k}_{p_1,q_1}, \,
l^{m,k}_{p_2,q_2})\stackrel{(\ref{dnbmkpq})}{\underset{p_2,q_2}{\gtrsim}}
m^{\frac{1}{p_2}-\frac{1}{p_1}}\left(n^{-1/2}m^{1/2}
k^{1/q_2}\right)^{\lambda({\bf q})}.
$$
Let $q_1\le 2$ and $p_1>2$. Choose $\tilde q\in [2, \, q_2]$ such
that $n=a(p_2, \, q_2)mk^{\frac{2}{\tilde q}}\le a(p_2, \, \tilde
q)mk^{\frac{2}{\tilde q}}$. Then
$$
d_n(B^{m,k}_{p_1,q_1}, \, l^{m,k}_{p_2,q_2})\ge
k^{\frac{1}{q_2}-\frac{1}{\tilde q}}d_n(B^{m,k}_{p_1,q_1}, \,
l^{m,k}_{p_2,\tilde q})
\stackrel{(\ref{dnlp})}{\underset{p_2,q_2}{\gtrsim}}
k^{\frac{1}{q_2}-\frac{1}{\tilde q}}\left(n^{-1/2}m^{1/
p_2}k^{1/\tilde q} \right)^{\lambda(\bf
p)}\underset{p_2,q_2}{\gtrsim}$$$$\gtrsim
k^{\frac{1}{q_2}}\left(n^{\frac 12}m^{-\frac
12}\right)^{\lambda({\bf p})-1}n^{-\frac{\lambda({\bf
p})}{2}}m^{\frac{\lambda({\bf p})}{p_2}}=n^{-\frac
12}k^{\frac{1}{q_2}}m^{\frac 12+\frac{1}{p_2}-\frac{1}{p_1}}.
$$
\end{enumerate}
\end{enumerate}
This ends the proof.
\end{proof}
\section{Estimates for linear widths of finite-dimensional balls
in a mixed norm}

In this paragraph upper estimates for linear widths $\lambda
_n(B^{m,k}_{p_1,q_1}, \, l^{m,k}_{p_2,q_2})$ are established. In
order to do this we generalize arguments of the paper
\cite{bib_gluskin}. Let us give some notations and formulate
auxiliary assertions from this article.

Let $\nu\in \N$, $r>2$, $\lambda\ge 1$,
$\mu_r(\lambda)=\left[\lambda^{\frac{1}{1/2-1/r}}\right]$. For any
vector $x=(x_1, \, \dots, \, x_\nu)\in \R^\nu$ we denote by ${\rm
supp}\, x=\{j\in\overline{1, \, \nu}:\, x_j\ne 0\}$, $(x^*_1, \,
\dots, \, x^*_\nu)$ the non-increasing rearrangement of $|x_j|$,
$1\le j\le \nu$. The constructions in section 4 and the proof of
Lemma 4 of the paper \cite{bib_gluskin} yield the following
assertions.
\begin{Lem}
\label{lemma1} There exist a set ${\cal E}={\cal
E}_{r,\lambda,\nu}\subset B_2^\nu$ and a number $c(r)>0$ such that
${\cal E}=-{\cal E}$ and
\begin{enumerate}
\item for any $y\in {\cal E}$ we have $|{\rm supp}\, y|\le [\lambda^2
\nu ^{2/r}]+1$;
\item $|{\cal E}|\le
\exp\left(c(r)\lambda^2\nu^{2/r}\right)$;
\item for any $x\in \R^\nu$ there exists a vector $y\in {\cal E}$
such that $$\sup _{\mu_r(\lambda)<j\le \nu}\lambda j^{1/r}x^*_j\le
\sum \limits _{i=1}^\nu x_iy_i.$$
\end{enumerate}
\end{Lem}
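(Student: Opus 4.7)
The plan is to build ${\cal E}$ as a union over integer scales $j\in(\mu_r(\lambda),\nu]$ of families of sparse sign-vectors in $B_2^\nu$. Set $N:=[\lambda^2\nu^{2/r}]+1$, and for each such $j$ let $N_j:=\min(j,N)$ and $\alpha_j:=\lambda j^{1/r}/N_j$. A short computation shows $\alpha_j\sqrt{N_j}\le 1$ in both regimes: for $j\le N$ this is $\lambda j^{1/r-1/2}\le 1$, which follows from $j>\mu_r(\lambda)=[\lambda^{1/(1/2-1/r)}]$; for $j>N$ it is $\lambda j^{1/r}/\sqrt{N}\le\lambda\nu^{1/r}/\sqrt{\lambda^2\nu^{2/r}}=1$. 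The family ${\cal E}_j$ will consist of vectors with entries in $\{0,\pm\alpha_j\}$ and support of size exactly $N_j$; setting ${\cal E}:=\bigcup_j{\cal E}_j$ enforces the symmetry ${\cal E}=-{\cal E}$ and makes property 1 immediate from $N_j\le N$.

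For property 3, given $x\in\R^\nu$, let $j^*$ attain the supremum. Assuming ${\cal E}_{j^*}$ contains the vector $y$ supported on the top $N_{j^*}$ coordinates of $|x|$ with signs matching those of $x$, one computes
$$\sum_{i=1}^\nu x_iy_i \;=\; \alpha_{j^*}\sum_{i=1}^{N_{j^*}}x_i^* \;\ge\; \alpha_{j^*}N_{j^*}\,x_{N_{j^*}}^* \;=\; \lambda(j^*)^{1/r}x_{N_{j^*}}^* \;\ge\; \lambda(j^*)^{1/r}x_{j^*}^*,$$
the last inequality following from $N_{j^*}=\min(j^*,N)\le j^*$, which makes $x_{N_{j^*}}^*\ge x_{j^*}^*$ since the non-increasing rearrangement is monotone.

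The main obstacle is property 2. A naive enumeration at each scale $j$ uses all $\binom{\nu}{N_j}$ supports, contributing a factor $\exp(N_j\log(e\nu/N_j))$, which overshoots the target by a factor of $\log(\nu/N)$ in the exponent. The trick, due to Gluskin (Lemma 4 and the construction of section 4 of \cite{bib_gluskin}), is a probabilistic Bernoulli-type covering: at each scale one selects only $\exp(c(r)N_j)$ sign-patterns (each coordinate on with probability $N_j/\nu$ and given a random sign), and shows by a union bound over coordinate orderings that for every $x$ at least one member of the family is aligned closely enough with the top $N_j$ signed coordinates of $x$ to reproduce the inner-product bound above. Summing $\exp(c(r)N_j)$ over the polynomially many scales $j\in(\mu_r(\lambda),\nu]$, with $N_j$ doubling up to $N$ and then constant, absorbs the polynomial-in-$\nu$ overhead into the exponential and yields $|{\cal E}|\le\exp(c(r)\lambda^2\nu^{2/r})$. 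Transferring this probabilistic covering from \cite{bib_gluskin}, and keeping the constant $c(r)$ controlled as $r\to 2^+$ (where $\mu_r(\lambda)$ blows up and the dyadic summation becomes delicate), is the main nontrivial piece of work; once it is in hand, properties 1 and 3 follow as described.
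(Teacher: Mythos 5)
The paper offers no proof of this lemma: it is imported wholesale from Gluskin's paper \cite{bib_gluskin} (the constructions of Section~4 and the proof of Lemma~4 there), and the one sentence preceding the statement is the entirety of the paper's justification. Your proposal ultimately rests on the same citation, since the coexistence of properties 2 and 3 --- which is the whole content of the lemma --- is deferred to Gluskin. That much is consistent with the paper. The problem is that the surrounding reconstruction you build is not merely incomplete but provably cannot be completed, so the claim that ``properties 1 and 3 follow as described'' once the covering is supplied is false.

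The obstruction is a no-slack phenomenon at scales $j\le N$. Take $x=\mathbf{1}_A$ with $|A|=j_0\in(\mu_r(\lambda),N]$, so that the supremum in property 3 equals $\lambda j_0^{1/r}$. For any $y$ of your form, with entries $\pm\alpha_{j'}$ on a support $S$ of size $N_{j'}$, one has $\langle x,y\rangle\le\alpha_{j'}\min(j_0,N_{j'})$. For $j'\le j_0$ this is $\alpha_{j'}j'=\lambda (j')^{1/r}\le\lambda j_0^{1/r}$, with equality forcing $j'=j_0$, $S=A$ and all signs positive; for $j_0<j'\le N$ it is $\alpha_{j'}j_0=\lambda (j')^{1/r-1}j_0<\lambda j_0^{1/r}$. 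Hence among the scales $j'\le N$ the \emph{only} vector that can serve $x=\mathbf{1}_A$ is the exact normalized signed indicator of $A$, and running over all signed $j_0$-sets would force $|{\cal E}|\ge\binom{\nu}{j_0}2^{j_0}$, which destroys property 2 as soon as $j_0\log(\nu/j_0)\gg\lambda^2\nu^{2/r}$. The only escape is through vectors from scales $j'>N$ (or, as in Gluskin's actual construction, vectors whose nonzero entries take several magnitudes), where the target is met with genuine slack; but there the required event is a large-deviation event for $|A\cap S|$ far above its mean $j_0N/\nu$, and its probability under the independent Bernoulli selection you sketch carries precisely the $\log(\nu/j_0)$ factor in the exponent that you set out to eliminate, so the naive union bound over configurations does not close either. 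In short, the choice of $y$ as the top-$N_{j^*}$ signed indicator at the extremal scale $j^*$ is exactly the wrong Ansatz, and the actual mechanism of Gluskin's Lemma~4 has been name-checked rather than transferred. If you intend to give a self-contained proof you must reproduce that construction; otherwise the honest form of the argument is the paper's: cite \cite{bib_gluskin} and stop.
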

\begin{Lem}
\label{lemma2} There exist a set ${\cal E}'={\cal
E}'_{r,\lambda,\nu}\subset B_2^\nu$ and a number $c(r)>0$ such
that ${\cal E}'=-{\cal E}'$ and
\begin{enumerate}
\item for any $y\in {\cal E}'$ we have $|{\rm supp}\, y|\le
\mu_r(\lambda)$;
\item $|{\cal E}'|\le\exp\left(c(r)\lambda^2\nu^{2/r}\right)$;
\item for any $x\in \R^\nu$ there exists a vector $y\in {\cal E}'$
such that $$\left(\sum \limits _{1\le j\le
\mu_r(\lambda)}|x^*_j|^2\right)^{1/2}\le 2\sum \limits _{j=1}^\nu
x_jy_j.$$
\end{enumerate}
\end{Lem}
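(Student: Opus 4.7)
The plan is to carry out directly the ``one-level'' half-net construction from Section~4 of~\cite{bib_gluskin}. Set $M=\mu_r(\lambda)=\bigl[\lambda^{1/(1/2-1/r)}\bigr]$. For every index set $I\subset\{1,\dots,\nu\}$ with $|I|\le M$ I would fix a symmetric $1/2$-net $N_I$ of the Euclidean unit ball of the coordinate subspace $\R^I\subset\R^\nu$; a standard volume argument gives $|N_I|\le 6^{|I|}\le 6^M$. Define
\[
\mathcal{E}'=\bigcup_{|I|\le M}N_I=-\mathcal{E}'.
\]
Each $y\in\mathcal{E}'$ then lies in some $\R^I$ with $|I|\le M$, which is property~1.

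For property~3, fix $x\in\R^\nu$ and let $I^*$ be the set of $M$ indices on which $|x_j|$ is largest; writing $x_{I^*}$ for the restriction of $x$ to $\R^{I^*}$ one has $\|x_{I^*}\|_{l_2^\nu}=\bigl(\sum_{j=1}^M(x^*_j)^2\bigr)^{1/2}$. Since $x_{I^*}/\|x_{I^*}\|_{l_2^\nu}$ is a unit vector in $\R^{I^*}$ and $N_{I^*}$ is a symmetric $1/2$-net, there exists $y\in N_{I^*}$ with $\bigl\langle x_{I^*}/\|x_{I^*}\|_{l_2^\nu},\,y\bigr\rangle\ge 1/2$; as $y$ is supported in $I^*$ this yields $\sum_{j=1}^\nu x_jy_j=\langle x_{I^*},y\rangle\ge\tfrac12\|x_{I^*}\|_{l_2^\nu}$, which is property~3 after multiplication by~$2$.

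The only technical point is property~2: one must check $|\mathcal{E}'|\le\binom{\nu}{M}\cdot 6^M\le\exp\!\bigl(c(r)\lambda^2\nu^{2/r}\bigr)$. If $\mu_r(\lambda)\ge\nu$, only $I=\{1,\dots,\nu\}$ appears, and the inequality $\lambda^{2r/(r-2)}\ge\nu$ immediately gives $\nu\le\lambda^2\nu^{2/r}$. Otherwise, from $\binom{\nu}{M}\le(e\nu/M)^M$ the bound reduces to
\[
M\log(C\nu/M)\underset{r}{\lesssim}\lambda^2\nu^{2/r}.
\]
Using $M\asymp\lambda^{2r/(r-2)}$ and writing $\lambda^2=\lambda^{2r/(r-2)}\cdot\lambda^{-4/(r-2)}$, set $u=\lambda^{-4/(r-2)}\nu^{2/r}$, so that $u^{r/2}=\nu/\lambda^{2r/(r-2)}\ge\tfrac12$; the claim then becomes $1+\tfrac r2\log u\underset{r}{\lesssim}u$, which is immediate for $u\ge\tfrac12$. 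This counting step is really the only substantive computation; the net construction and property~3 are routine.
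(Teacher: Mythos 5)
Your construction is correct: the union of symmetric $\tfrac12$-nets over all coordinate subspaces of dimension at most $\mu_r(\lambda)$ gives properties 1 and 3 immediately, and your counting estimate (including the reduction to $1+\tfrac r2\log u\underset{r}{\lesssim}u$ for $u\ge\tfrac12$ and the separate treatment of the case $\mu_r(\lambda)\ge\nu$) is sound. Note that the paper itself offers no proof of this lemma — it is quoted as a consequence of the constructions in Section 4 of Gluskin's paper \cite{bib_gluskin} — and your argument is essentially that standard construction, written out in full, so there is nothing to reconcile with the text.
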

Let $N\in \N$. Denote ${\cal F}_0(N)$ the family of
absolute convex polyhedrons which are contained in
$B_2^\nu$ and have not more than $2N$ vertices. We say that an absolute
convex body belongs to ${\cal F}(N)$, if
there exists a set $K\in {\cal F}_0(N)$ such that $V\subset K$.
Let $W$ be a bounded closed absolute convex body.
Denote $\R^\nu_W$ the linear space $\R^\nu$ with the norm $\|\cdot\|_W$
such that $W=\{x\in \R^\nu:\|x\|_W\le 1\}$. By $(\R^\nu_W)^*$
denote the dual space. The following lemma is proved in \cite{bib_gluskin}.
\begin{Lem}
\label{lemma3} There exists an absolute constant $c_0>0$ such that
for any $\theta>0$, $N\in \N$ such that
$N<\frac{1}{16}\exp\left(\frac{\theta^2\nu}{4}\right)$, for any
sets $V$, $W\in {\cal F}(N)$ and for any $n\in \{0, \, \dots, \,
\nu\}$ we have
$$
\lambda_n(V, \, (\R^\nu_W)^*)\le c_0\left(\theta\sqrt{\frac{\nu
-n}{n}}+\theta ^2\frac{\nu}{n}\right).
$$
\end{Lem}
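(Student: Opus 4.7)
I would argue by a probabilistic construction: take a random rank-$n$ operator $A$, then verify the desired bound at all $(2N)^2$ pairs of vertices of $V$ and $W$ simultaneously via a union bound.

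\emph{Reduction to vertices.} Since $V,W\in\mathcal{F}(N)$, after enlarging $V$ (which only increases $\lambda_n$) and enlarging $W$ (which shrinks $W^\circ$ and therefore increases the target norm $\|\cdot\|_{(\R^\nu_W)^*}$), I may assume that $V=\mathrm{abs\,conv}\{v_1,\ldots,v_{2N}\}$ and $W=\mathrm{abs\,conv}\{w_1,\ldots,w_{2N}\}$ with $v_i,w_j\in B_2^\nu$. For any rank-$n$ operator $A$,
\[
\sup_{v\in V}\|v-Av\|_{(\R^\nu_W)^*}=\max_{1\le i,j\le 2N}|\langle(I-A)v_i,w_j\rangle|.
\]

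\emph{Random operator and concentration.} Let $P_L$ be the orthogonal projection onto a Haar-random $n$-dimensional subspace $L\subset\R^\nu$, and set $A=\tfrac{\nu}{n}P_L$, so $\mathrm{rank}\,A=n$ and
\[
\langle(I-A)v,w\rangle=\langle v,w\rangle-\tfrac{\nu}{n}\langle P_Lv,P_Lw\rangle.
\]
The identity $\mathbb{E}\langle P_Lv,P_Lw\rangle=(n/\nu)\langle v,w\rangle$ makes this random variable mean zero; the scaling $\nu/n$ (rather than the naive choice $A=P_L$) is exactly what forces the fluctuation to scale like $\sqrt{(\nu-n)/(n\nu)}$, producing the denominator $n$ (not $\nu$) in the final bound. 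Writing $P_L=QP_0Q^{T}$ with $Q$ Haar on $O(\nu)$ and $P_0$ the coordinate projection onto the first $n$ axes, $\langle(I-A)v,w\rangle$ becomes a centered quadratic polynomial in the entries of $Q$, so Bernstein-type concentration on $O(\nu)$ yields a two-regime tail
\[
\mathbb{P}\bigl[|\langle(I-A)v,w\rangle|>t\bigr]\le 2\exp\!\left(-c\,\min\!\left(\tfrac{n\nu\,t^2}{\nu-n},\,\tfrac{nt}{\nu}\right)\right)
\]
for unit $v,w$: sub-Gaussian with scale $\sqrt{(\nu-n)/(n\nu)}$ and sub-exponential with scale $\nu/n$.

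\emph{Union bound and main obstacle.} With $t=c_0\bigl(\theta\sqrt{(\nu-n)/n}+\theta^2\nu/n\bigr)$ the two summands of $t$ match the sub-Gaussian and the sub-exponential regimes respectively, and the hypothesis $N<\tfrac{1}{16}\exp(\theta^2\nu/4)$ is calibrated precisely so that the tail above is below $(4N^2)^{-1}$ in the sub-Gaussian regime. A union bound over the $4N^2$ vertex pairs then produces, with positive probability, a realization of $L$ for which $\max_{i,j}|\langle(I-A)v_i,w_j\rangle|\le t$, and that $A$ supplies the required rank-$n$ approximation. The delicate step is the concentration inequality itself: one must identify the correct sub-Gaussian and sub-exponential scales of $\langle(I-A)v,w\rangle$ as a function on the Grassmannian and match the threshold $\tfrac{1}{16}\exp(\theta^2\nu/4)$ to the sub-Gaussian exponent so that both terms of the claimed bound arise at once. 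The reduction to vertices, the choice of $A$ and the union bound itself are routine once the tail estimate is in place.
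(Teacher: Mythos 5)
First, a point of comparison: the paper does not prove Lemma~\ref{lemma3} at all --- it is quoted verbatim from Gluskin \cite{bib_gluskin} --- so there is no in-paper proof to check you against. Your overall route (reduce to the at most $4N^2$ pairs of vertices, take the random rank-$n$ operator $A=\tfrac{\nu}{n}P_L$ with $P_L$ a Haar-random orthogonal projection, and close with a union bound) is exactly the standard Gluskin-type argument; the reduction to vertices, the centering identity $\mathbb{E}\langle P_Lv,P_Lw\rangle=\tfrac{n}{\nu}\langle v,w\rangle$, and the identification of the sub-Gaussian scale $\sqrt{(\nu-n)/(n\nu)}$ are all correct.

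The gap is in the concentration inequality, which you rightly single out as the delicate step but then state with the wrong sub-exponential scale. For unit $v,w$ the variable $\tfrac{\nu}{n}\langle P_Lv,P_Lw\rangle-\langle v,w\rangle$ is sub-exponential with parameter of order $1/n$, not $\nu/n$: in the Gaussian model $\tfrac{\nu}{n}P_L\approx\tfrac1n\sum_{i=1}^n g_ig_i^{T}$, each summand $\tfrac1n\langle g_i,v\rangle\langle g_i,w\rangle$ is sub-exponential with parameter $O(1/n)$, and indeed $\mathbb{P}\bigl[\tfrac{\nu}{n}\|P_Lv\|^2\ge K\bigr]\le e^{-cnK}$ for $K\ge 2$. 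The correct two-regime bound is
$$
\mathbb{P}\bigl[|\langle(I-A)v,w\rangle|>t\bigr]\le 2\exp\Bigl(-c\,\min\Bigl(\tfrac{n\nu\,t^2}{\nu-n},\ nt\Bigr)\Bigr),
$$
whereas your version has $nt/\nu$ in the second slot. With your version the union bound fails precisely in the regime you left unverified (you only check the calibration ``in the sub-Gaussian regime''): at $t\asymp\theta^2\nu/n$ your tail is $\exp(-c\theta^2)$, while $4N^2$ may be as large as $\tfrac{1}{64}\exp(\theta^2\nu/2)$, so the total failure probability is not less than $1$ and no good realization of $L$ is produced. With the corrected exponent $nt$, the choice $t=c_0\theta^2\nu/n$ gives $\exp(-cc_0\theta^2\nu)$, which does beat $(4N^2)^{-1}$ for $c_0$ large, and both terms of the claimed bound then come out of the two regimes as intended. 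So the approach is the right one, but the quantitative heart of the proof is off by a factor of $\nu$ and must be repaired before the union bound closes.
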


The following duality relation for linear widths was proved in \cite{bib_ismag}.
Let $B$, $C\subset \R^\nu$ be absolute convex closed bounded
bodies, let $B^\circ$, $C^\circ$ be their polars. Then
\begin{align}
\label{dual}
\lambda_n(B, \, \R^\nu_C)=\lambda_n(C^\circ, \, \R^\nu_{B^\circ}).
\end{align}

Now we formulate the main result of this section.
\begin{Trm}
\label{lin_fin} Let $m$, $k\in \N$, $1<p_1\le 2\le p_2<\infty$,
$1<q_1\le 2\le q_2<\infty$, $n\in \N$. Then
$$
\lambda_n(B^{m,k}_{p_1,q_1}, \,
l^{m,k}_{p_2,q_2})\underset{p_1,p_2,q_1,q_2}{\lesssim}\min
\left\{n^{-\frac 12}m^{\max\left\{\frac{1}{p_2}, \,
\frac{1}{p_1'}\right\}}k^{\max \left\{\frac{1}{q_2}, \,
\frac{1}{q_1'}\right\}}, \, 1\right\}.
$$
Let $a$ be defined in Theorem \ref{fin_dim_mix_norm}, $n\le amk$.
If $\frac{1}{p_1}+\frac{1}{p_2}\ge 1$ and
$\frac{1}{q_1}+\frac{1}{q_2}\ge 1$, then
\begin{align}
\label{ln1}
\lambda _n(B^{m,k}_{p_1,q_1}, \,
l^{m,k}_{p_2,q_2})\underset{p_1,p_2,q_1,q_2}{\gtrsim}n^{-\frac 12}m^{\frac{1}{p_2}}
k^{\frac{1}{q_2}};
\end{align}
if $\frac{1}{p_1}+\frac{1}{p_2}\le 1$ and $\frac{1}{q_1}+\frac{1}{q_2}\le 1$,
then
\begin{align}
\label{ln2}
\lambda _n(B^{m,k}_{p_1,q_1}, \,
l^{m,k}_{p_2,q_2})\underset{p_1,p_2,q_1,q_2}{\gtrsim}n^{-\frac 12}m^{\frac{1}{p_1'}}
k^{\frac{1}{q_1'}}.
\end{align}
\end{Trm}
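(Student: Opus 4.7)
The plan has two parts: an upper bound obtained by applying Lemma~\ref{lemma3} to Gluskin-type polyhedra, and a lower bound obtained from the inequality $\lambda_n\ge d_n$ together with Theorem~\ref{fin_dim_mix_norm} and the duality~(\ref{dual}). The trivial upper bound $\lambda_n\le 1$ follows at once by taking the zero operator, since the norm inequality $\|\cdot\|_{l^{m,k}_{p_2,q_2}}\le\|\cdot\|_{l^{m,k}_{p_1,q_1}}$ holds (because $p_1\le p_2$, $q_1\le q_2$). For the non-trivial upper bound, I would first notice that the target $n^{-1/2}m^{\max\{1/p_2,1/p_1'\}}k^{\max\{1/q_2,1/q_1'\}}$ is self-dual under $(p_1,q_1,p_2,q_2)\mapsto(p_2',q_2',p_1',q_1')$, so by~(\ref{dual}) I may assume $1/p_1+1/p_2\ge 1$ (making the $p$-maximum equal $1/p_2$); the two sub-cases $1/q_1+1/q_2\ge 1$ and $1/q_1+1/q_2\le 1$ are then handled in parallel, exploiting the $(m,p)\leftrightarrow(k,q)$ symmetry of the mixed norm whenever convenient.

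To apply Lemma~\ref{lemma3} with $\nu=mk$, I need polyhedra $V,W\in\mathcal{F}(N)$ satisfying $V\supset c_1 B^{m,k}_{p_1,q_1}$ and $W\supset c_2 B^{m,k}_{p_2',q_2'}$; the former gives $c_1\lambda_n(B^{m,k}_{p_1,q_1},\cdot)\le\lambda_n(V,\cdot)$, while the latter gives $\|\cdot\|_{l^{m,k}_{p_2,q_2}}=\sup_{w\in B^{m,k}_{p_2',q_2'}}\langle\cdot,w\rangle\le c_2^{-1}\|\cdot\|_{(\R^{mk}_W)^*}$. I would build $V$ as a product construction from the Gluskin polyhedra of Lemmas~\ref{lemma1}--\ref{lemma2}: apply Lemma~\ref{lemma2} with parameters tuned to $p_1$ to each column $x_{\bullet,j}$ (producing vertices of small $i$-support), then apply Lemma~\ref{lemma1} or~\ref{lemma2} with parameters tuned to $q_1$ across columns; the vertex set of $V$ consists of the resulting product vectors rescaled to lie in $B_2^{mk}$. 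A parallel construction with exponents $p_2',q_2'$ yields $W$. The parameters are chosen so that $|V|,|W|\le\exp(c\theta^2 mk)$, whereupon Lemma~\ref{lemma3} gives $\lambda_n(V,(\R^{mk}_W)^*)\lesssim\theta\sqrt{(mk-n)/n}$, and unwinding the dilations $c_1,c_2$ reproduces the claimed upper bound.

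For the lower bound~(\ref{ln1}), the general inequality $\lambda_n\ge d_n$ combined with Theorem~\ref{fin_dim_mix_norm} (applicable because $p_1\le 2\le p_2$, $q_1\le 2\le q_2$ and $n\le amk$) gives $\lambda_n\ge d_n\asymp\min\{1,n^{-1/2}m^{1/p_2}k^{1/q_2}\}\gtrsim n^{-1/2}m^{1/p_2}k^{1/q_2}$. For~(\ref{ln2}), I use duality~(\ref{dual}) to write $\lambda_n(B^{m,k}_{p_1,q_1},l^{m,k}_{p_2,q_2})=\lambda_n(B^{m,k}_{p_2',q_2'},l^{m,k}_{p_1',q_1'})$; the hypotheses $1/p_1+1/p_2\le 1$ and $1/q_1+1/q_2\le 1$ become $1/p_2'+1/p_1'\ge 1$ and $1/q_2'+1/q_1'\ge 1$, and one has $p_2',q_2'\le 2\le p_1',q_1'$, so applying the argument of~(\ref{ln1}) to the dualized problem yields the bound $n^{-1/2}m^{1/p_1'}k^{1/q_1'}$.

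The main obstacle is the combinatorial-geometric construction of the tensor polyhedra $V,W$: one must verify the containment $V\supset c_1 B^{m,k}_{p_1,q_1}$ with a constant $c_1$ depending only on the exponents, and simultaneously keep the vertex count low enough that $\log|V|=O(\theta^2 mk)$ rather than only $O(\theta^2(m+k))$. This is the \emph{more general form} of polyhedron alluded to in the introduction; it genuinely extends the one-parameter construction of~\cite{bib_gluskin}. Once the product polyhedra are in place, the abstract bound of Lemma~\ref{lemma3} produces the desired upper estimate in a few lines, and the duality~(\ref{dual}) handles the remaining cases mechanically.
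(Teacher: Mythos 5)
Your proposal follows essentially the same route as the paper: the lower bounds via $\lambda_n\ge d_n$, Theorem~\ref{fin_dim_mix_norm} and the duality~(\ref{dual}), and the upper bound via Lemma~\ref{lemma3} applied to tensor-product polyhedra built from the Gluskin sets of Lemmas~\ref{lemma1}--\ref{lemma2}, with the containment $c_1B^{m,k}_{p_1,q_1}\subset K$ verified through the two-level (rows, then columns) decomposition of the mixed norm. The only cosmetic difference is that the paper avoids your duality normalization and the use of two distinct bodies $V,W$ by reducing at once to the symmetric case $V=W=c_1B^{m,k}_{p',q'}$ with $p=\min\{p_2,\,p_1'\}$, $q=\min\{q_2,\,q_1'\}$, for which a single polyhedron with $\exp\bigl(c\,m^{2/p}k^{2/q}\bigr)$ vertices suffices.
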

\begin{proof}
The estimate (\ref{ln1}) follows from Theorem
\ref{fin_dim_mix_norm} and the inequality $\lambda_n(M, \, X)\ge
d_n(M, \, X)$. The relation (\ref{ln2}) follows from (\ref{ln1})
and (\ref{dual}).

Since $p_1\le p_2$ and $q_1\le q_2$, we have $\lambda
_n(B^{m,k}_{p_1,q_1}, \, l^{m,k}_{p_2,q_2})\le 1$. Let us prove that
\begin{align}
\label{eq_lin} \lambda _n(B^{m,k}_{p_1,q_1}, \,
l^{m,k}_{p_2,q_2})\underset{p_1,p_2,q_1,q_2}{\lesssim}n^{-\frac
12}m^{\max\left\{\frac{1}{p_2}, \, \frac{1}{p_1'}\right\}}k^{\max
\left\{\frac{1}{q_2}, \, \frac{1}{q_1'}\right\}}.
\end{align}

Let $2\le p<\infty$, $2\le q<\infty$ and $n^{-\frac
12}m^{\frac{1}{p}} k^{\frac{1}{q}}<1$. We show that the following estimate
holds:
\begin{align}
\label{pprim} \lambda _n(B^{m,k}_{p',q'}, \,
l^{m,k}_{p,q})\underset{p,q}{\lesssim}n^{-\frac 12}m^{\frac
1p}k^{\frac 1q}
\end{align}
(then (\ref{eq_lin}) easily follows from(\ref{pprim})). In order
to do this we find constants $c_1(p, \, q)>0$ and $c_2(p, \, q)>0$
such that
\begin{align}
\label{c1pq}
c_1(p, \, q)B^{m,k}_{p',q'}\in {\cal F}(N),
\end{align}
where $N<\frac{1}{16}\exp \left(c_2(p, \,
q)m^{\frac{2}{p}}k^{\frac{2}{q}}\right)$, and apply Lemma
\ref{lemma3} for $\theta=2\sqrt{c_2(p, \, q)}m^{\frac 1p-\frac 12}
k^{\frac 1q-\frac12}$ and $V=W=c_1(p, \, q)B^{m,k}_{p',q'}$. Then
$\theta ^2\frac{mk}{n}=4c_2(p, \, q) m^{\frac 2p}k^{\frac
2q}n^{-1}\underset{p,q}{\lesssim}1$ and
$$
\lambda _n(B^{m,k}_{p',q'}, \,
l^{m,k}_{p,q})\underset{p,q}{\lesssim} \theta\sqrt{\frac{mk}{n}}+\theta ^2\frac{mk}{n}
\underset{p,q}{\lesssim}n^{-\frac 12}m^{\frac 1p}k^{\frac 1q}.
$$

Let us prove (\ref{c1pq}). For any $x=(x_1, \, \dots , \, x_m)\in \R^m$ and
$1\le l\le k$ we denote $I_l(x)=(x_{i,j})_{1\le i\le m, \, 1\le
j\le k}$, where $x_{i,j}=0$ if $j\ne l$, $x_{i,l}=x_i$, $1\le i\le
m$. Let
$$
{\cal E}_{1,1}=\left\{\sum \limits_{j\in {\rm supp}\, y}
y_jI_j(x_j), \; y=(y_1, \, \dots, \, y_k)\in {\cal
E}'_{2q,k^{\frac{1}{2q}},k}, \;\; x_j\in {\cal
E}'_{2p,m^{\frac{1}{2p}},m}, \;j\in {\rm supp}\, y\right\},
$$
$$
{\cal E}_{1,2}=\left\{\sum \limits_{j\in {\rm supp}\, y}
y_jI_j(x_j), \; y=(y_1, \, \dots, \, y_k)\in {\cal
E}'_{2q,k^{\frac{1}{2q}},k}, \;\; x_j\in {\cal
E}_{2p,m^{\frac{1}{2p}},m}, \;j\in {\rm supp}\, y\right\},
$$
$$
{\cal E}_{2,1}=\left\{\sum \limits_{j\in {\rm supp}\, y}
y_jI_j(x_j), \; y=(y_1, \, \dots, \, y_k)\in {\cal
E}_{2q,k^{\frac{1}{2q}},k}, \;\; x_j\in {\cal
E}'_{2p,m^{\frac{1}{2p}},m}, \;j\in {\rm supp}\, y\right\},
$$
$$
{\cal E}_{2,2}=\left\{\sum \limits_{j\in {\rm supp}\, y}
y_jI_j(x_j), \; y=(y_1, \, \dots, \, y_k)\in {\cal
E}_{2q,k^{\frac{1}{2q}},k}, \;\; x_j\in {\cal
E}_{2p,m^{\frac{1}{2p}},m}, \;j\in {\rm supp}\, y\right\},
$$
$$
K={\rm conv}\, \left({\cal E}_{1,1}\cup {\cal E}_{1,2}\cup {\cal E}_{2,1}\cup
{\cal E}_{2,2}\right).
$$
Then $K$ is an absolute convex polyhedron. Lemmas \ref{lemma1} and
\ref{lemma2} yield that
$$
\max\left\{{\rm card}\, {\cal E}_{1,1}, \, {\rm card}\, {\cal
E}_{1,2}\right\}\le \exp\left(c(2q)k^{2/q}\right)
\left(\exp\left(c(2p)m^{2/p}\right)\right)^{\mu_{2q}\left(k^{1/2q}\right)}=$$
$$
=\exp\left(c(2q)k^{2/q}+c(2p)m^{2/p}\mu_{2q}(k^{1/2q})\right),
$$
$$
\max\left\{{\rm card}\, {\cal E}_{2,1}, \, {\rm card}\, {\cal
E}_{2,2}\right\} \le \exp\left(c(2q)k^{2/q}\right)
\left(\exp\left(c(2p)m^{2/p}\right)\right)^{[k^{2/q}]+1}=
$$
$$
=\exp\left(c(2q)k^{2/q}+c(2p)m^{2/p}(1+[k^{2/q}])\right);
$$
since $\mu_{2q}(k^{1/2q})\le k^{\frac{1}{q-1}}\le k^{2/q}$
(the last inequality follows from the relation $q\ge 2$), then
there exists a constant $c_3(p, \, q)$ such that
$$
\max\left\{{\rm card}\, {\cal E}_{1,1}, \, {\rm card}\, {\cal
E}_{1,2}, \, {\rm card}\, {\cal E}_{2,1}, \, {\rm card}\, {\cal
E}_{2,2}\right\}\le \exp\left(c_3(p, \, q)m^{2/p}k^{2/q}\right).
$$
Therefore, there exists $c_2(p, \, q)>0$ such that $K\in {\cal
F}_0(N)$, where $N<\frac{1}{16}\exp\left(c_2(p, \,
q)m^{\frac{2}{p}}k^{\frac{2}{q}}\right)$.

Let us show that there exists $c_1(p, \, q)>0$ such that
$c_1(p, \, q)B^{m,k}_{p',q'}\subset K$. It is enough to check the inequality
$$
\max \{\langle x, \, y\rangle :\; y\in
B^{m,k}_{p',q'}\}\underset{p,q}{\lesssim} \max \{\langle x, \,
y\rangle :\; y\in K\},\;\; x\in \R^{mk},
$$
that is
\begin{align}
\label{xmlk} \|x\|_{l^{m,k}_{p,q}}\underset{p,q}{\lesssim} \max
\{\langle x, \, y\rangle :\; y\in K\}.
\end{align}

Let $x=(x_{i,j})_{1\le i\le m, \, 1\le j\le k}\in \R^{mk}$. Denote
$a_j=\left(\sum \limits _{i=1}^m |x_{i,j}|^p\right)^{1/p}$,
$(x^*_{1,j}, \, \dots, \, x^*_{m,j})$ the non-increasing
rearrangement of $|x_{i,j}|$, $1\le i\le m$, and $(a^*_1, \,
\dots, \, a^*_k)$ the non-increasing rearrangement of $|a_j|$,
$1\le j\le k$.

We claim that
\begin{align}
\label{nxmkpq}
\|x\|_{l^{m,k}_{p,q}}\underset{q}{\lesssim}\left(\sum \limits
_{j\le \mu_{2q}(k^{1/2q})}|a^*_j|^2\right)^{1/2}+ \max
_{j>\mu_{2q}(k^{1/2q})}k^{1/2q} j^{1/2q}a^*_j.
\end{align}

Indeed, there exists an absolute constant $c_4>0$ such that
$$
\sum \limits _{\mu_{2q}(k^{1/2q})<j\le k} k^{-\frac 12}j^{-\frac
12}\le c_4.
$$
Therefore,
$$
\|x\|_{l_{p,q}^{m,k}}=\left(\sum \limits _{j=1}^k
|a^*_j|^q\right)^{1/q}\le \left(\sum \limits
_{j=1}^{\mu_{2q}(k^{1/2q})}|a^*_j|^2\right)^{1/2}+
$$
$$
+\left(\sum \limits _{j=\mu_{2q}(k^{1/2q})+1}^k
\left(k^{1/2q}j^{1/2q}a^*_j\right)^q k^{-\frac 12}j^{-\frac
12}\right)^{1/q}\underset{q}{\lesssim}
$$
$$
\lesssim \left(\sum \limits
_{j=1}^{\mu_{2q}(k^{1/2q})}|a^*_j|^2\right)^{1/2}+\max
_{j>\mu_{2q}(k^{1/2q})}k^{1/2q}j^{1/2q}a^*_j.
$$
By Lemmas \ref{lemma1} and \ref{lemma2}, there exist
$y^{(1)}=(y^{(1)}_j)_{1\le j\le k}\in {\cal E}'_{2q,k^{1/2q},k}$
and $y^{(2)}=(y^{(2)}_j)_{1\le j\le k}\in {\cal
E}_{2q,k^{1/2q},k}$ such that
\begin{align}
\label{11} \left(\sum \limits
_{j=1}^{\mu_{2q}(k^{1/2q})}|a^*_j|^2\right)^{1/2}\le 2\sum \limits
_{j=1}^k y_j^{(1)}a_j=2\sum \limits _{j=1}^k y_j^{(1)}\left(\sum
\limits _{i=1}^m |x_{i,j}^*|^p\right)^{1/p},
\end{align}
\begin{align}
\label{22} \max _{j>\mu_{2q}(k^{1/2q})}k^{1/2q}j^{1/2q}a_j^*\le
\sum \limits _{j=1}^k y_j^{(2)}a_j= \sum \limits _{j=1}^k
y_j^{(2)}\left(\sum \limits _{i=1}^m |x_{i,j}^*|^p\right)^{1/p}.
\end{align}
In the same way as (\ref{nxmkpq}) we can prove that for any $j=1, \, \dots, \, k$
$$
\left(\sum \limits _{i=1}^m
|x_{i,j}^*|^p\right)^{1/p}\underset{p}{\lesssim} \left(\sum
\limits _{i=1}^{\mu_{2p}(m^{1/2p})}
|x^*_{i,j}|^2\right)^{1/2}+\max
_{i>\mu_{2p}(m^{1/2p})}m^{1/2p}i^{1/2p}|x^*_{i,j}|.
$$
Therefore, there exist $z_j^{(1)}=(z^{(1)}_{i,j})_{1\le i\le m}\in
{\cal E}'_{2p,m^{1/2p},m}$ and $z_j^{(2)}=(z^{(2)}_{i,j})_{1\le
i\le m}\in {\cal E}_{2p,m^{1/2p},m}$ such that
\begin{align}
\label{33}
\left(\sum \limits _{i=1}^{\mu_{2p}(m^{1/2p})}
|x^*_{i,j}|^2\right)^{1/2}\le 2\sum \limits _{i=1}^m z^{(1)}_{i,j}
x_{i,j},
\end{align}
\begin{align}
\label{44}
\max _{i>\mu_{2p}(m^{1/2p})}m^{1/2p}i^{1/2p}|x^*_{i,j}|\le \sum
\limits _{i=1}^m z^{(2)}_{i,j} x_{i,j}.
\end{align}

Combining (\ref{nxmkpq}), (\ref{11}), (\ref{22}), (\ref{33}),
(\ref{44}) and the inequality $|\xi_1|+|\xi_2|\le 2\max \{|\xi_1|,
\, |\xi_2|\}$, we obtain that for any $x$ there exist $s\in \{1,
\, 2\}$ and $t\in \{1, \, 2\}$ such that
$$
\|x\|_{l^{m,k}_{p,q}}\underset{p,q}{\lesssim}\sum \limits _{j=1}^k
y_j^{(s)}\sum \limits _{i=1}^m (I_j(z_j^{(t)}))_i x_{i,j}=\langle
b, \, x\rangle,
$$
where $b\in {\cal E}_{s,t}$. This completes the proof of
(\ref{xmlk}).
\end{proof}
\section{Bounds for widths of weighted Besov classes}
Let $m\in \N$, $n\in \N\cap[2, \, +\infty)$, $N_n\in \N$, let
$\varphi _1^n, \, \dots , \, \varphi _m^n:\R_+ \rightarrow \R$ be
affine functions,
$$
\varphi _s^n(\xi)=\alpha _s\xi +\beta _{s,n},\;\;s=1, \, \dots, \,
m,\;\; \varphi _{m+1}^n(\xi)=+\infty, \;\; \xi \in \R_+,
$$
$$
\varphi _1^n(\xi)\le \dots \le\varphi _m^n(\xi), \;\; \;\; 0\le
\xi \le N_n, \; n\in \N\cap[2, \, +\infty).
$$
Denote
$$
E_s^n=\{(\xi , \, \eta)\in \R^2:\varphi _s^n(\xi)\le \eta <
\varphi _{s+1}^n(\xi), \; 0\le \xi \le N_n\}, \;\; 1\le s\le m,
$$
$$
E^n=\bigcup _{s=1}^m E_s^n=\{(\xi , \, \eta)\in \R^2:\varphi
_1^n(\xi)\le \eta < +\infty, \; 0\le \xi \le N_n\}.
$$

Let $\psi _n:E^n\rightarrow \R$, $\tilde \psi _n: E^n\rightarrow
\R$,
$$
\psi _n(\xi , \, \eta)=\lambda _s\xi +\mu _s\eta +\nu _{s,n}, \;\;
(\xi , \, \eta)\in E_s^n,
$$
$$
\tilde \psi _n(\xi , \, \eta)=\tilde \lambda _s\xi +\tilde \mu
_s\eta +\tilde\nu _{s,n}, \;\; (\xi , \, \eta)\in E_s^n.
$$
In addition, we suppose that functions $\psi_n$ are continuous.

Denote
$$
\Lambda =\left\{(\alpha _s)_{1\le s\le m}, \;
(\lambda _s)_{1\le s\le m}, \;
(\mu _s)_{1\le s\le m}\right\},
$$
$$
\tilde\Lambda =\left\{(\alpha _s)_{1\le s\le m}, \; (\tilde\lambda
_s)_{1\le s\le m}, \; (\tilde\mu _s)_{1\le s\le m}\right\},
$$
$\Psi =\{\psi _n, \; n\in \N\cap[2, \, +\infty)\}$, $\tilde \Psi
=\{\tilde \psi _n, \; n\in \N\cap[2, \, +\infty)\}$,
$$
V_n=\{(0, \, \varphi ^n_s(0)), \;(N_n, \, \varphi _s^n(N_n)): 1\le
s\le m\}.
$$

Let either $j_*^n=0$ for any $n\in \N\cap[2, \, +\infty)$ or
$j_*^n=N_n$ for any $n\in \N\cap[2, \, +\infty)$, and let
$S=\{s_-, \, s_-+1,\, \dots ,\, s_+-1, \, s_+\}\subset \{1, \,
\dots , \, m\}$ (this set can be empty in general), $l_*^n\in \R$,
\begin{align}
\label{sphinsln}
\{s\in \overline{1, \, m}:
\varphi ^n_s(j^n_*)=l_*^n\}=S,
\end{align}
$J=\{(j_*^n, \, l_*^n):n\in \N\cap[2, \, +\infty)\}$,
$\varepsilon
>0$. We say that $\tilde \Psi\in {\cal O}_{\varepsilon ,J}(\Psi)$
if
\begin{align}
\label{lstlse} |\lambda _s-\tilde\lambda _s|<\varepsilon , \;\;
|\mu _s -\tilde\mu _s|<\varepsilon , \;\; 1\le s\le m,
\end{align}
\begin{align}
\label{1622} |\psi _n(j, \, l)-\tilde \psi _n(j, \,
l)|<\varepsilon \log _2n, \;\;0\le j\le N_n, \;\;\varphi_1^n(j)\le
l\le \varphi ^n_m(j),
\end{align}
\begin{align}
\label{psinj} \psi _n(j_*^n, \, l_*^n)=\tilde \psi _n(j_*^n, \,
l_*^n)=\tilde \psi _n(j_*^n, \, l_*^n-0)\text{ for any }n\in
\N\cap [2, \, +\infty).
\end{align}

\begin{Lem}
\label{osn_lemma} Let $m\in \N$, $c>0$, $\gamma >0$ be fixed, let
$\mu _m<0$,
\begin{align}
\label{psinjl0}
\psi _n(j_*^n, \, l_*^n)=0, \; \; n\in \N\cap [2, \, +\infty),
\end{align}
and let $\tilde N_n\in \Z_+$,
\begin{align}
\label{nnleclog2nphin} N_n\le c\log _2 n, \;\; -c\log _2 n\le
\varphi ^n_1(\xi)\le\dots \le \varphi _m^n(\xi)\le c\log _2 n,
\;\; \xi \in [0, \, N_n],
\end{align}
\begin{align}
\label{psijllemglog2nvbjl} \psi _n(j, \, l)\le -\gamma \log _2 n,
\;\; (j, \, l)\in V_n\backslash (j_*^n, \, l_*^n)
\end{align}
for any $n\in \N\cap [2, \, +\infty)$. Let $\rho :[1, \,
+\infty)\rightarrow (0, \, +\infty)$ be an absolute continuous
function that satisfies (\ref{malo_izm331}).

Then there exists $\varepsilon _0=\varepsilon _0(\Lambda , \, c,
\, \gamma)>0$ such that for any $\tilde \Psi\in {\cal
O}_{\varepsilon _0,J}(\Psi)$ the following estimate holds:
$$
\sum \limits _{(j, \, l)\in \Z^2\cap E^n}2^{\tilde\psi _n(j, \,
l)}\rho(2^{j+\tilde N_n})
\overset{n}{\lesssim}\rho(2^{j_*^n+\tilde N_n}).
$$
\end{Lem}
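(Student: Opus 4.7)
The plan is to exploit the piecewise-affine structure of $\tilde\psi_n$: on each strip $E_s^n$, $\tilde\psi_n(j,l)=\tilde\lambda_s j+\tilde\mu_s l+\tilde\nu_{s,n}$, so the inner sum over $l$ for each fixed $j$ is a geometric series that can be estimated in closed form. I would fix $\varepsilon_0=\varepsilon_0(\Lambda,c,\gamma)>0$ small enough that $\tilde\mu_m\le \mu_m/2<0$ (ensuring convergence of the $l$-sum in the unbounded strip $E_m^n$) and that $|\tilde\mu_s|\ge |\mu_s|/2$ on intermediate strips with $\mu_s\ne 0$ (so the geometric constant is bounded); strips with $\mu_s=0$ contribute only an $O(\log_2 n)$ length factor by (\ref{nnleclog2nphin}). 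After summing out $l$, the problem reduces to a sum
\[
\Sigma_s=\sum_{j=0}^{N_n} 2^{\tilde\Phi_s(j)}\rho(2^{j+\tilde N_n}), \qquad \tilde\Phi_s(j):=\tilde\psi_n(j,l_{s,j}),
\]
where $l_{s,j}\in\{\varphi_s^n(j),\varphi_{s+1}^n(j)\}$ is the $l$-maximizing endpoint, and $\tilde\Phi_s$ is affine in $j$ with slope $\tilde A_s$ close to a fixed $A_s$ in the $\psi_n$-model.

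The crucial dichotomy is between strips in $S$ and the rest. For $s\in S$ (strip boundary touching $(j_*^n,l_*^n)$ at $j=j_*^n$), condition (\ref{psinj}) gives $\tilde\psi_n(j_*^n,l_*^n)=0$ exactly, so the summand at $(j,l)=(j_*^n,l_*^n)$ contributes precisely $\rho(2^{j_*^n+\tilde N_n})$ without the multiplicative $n^{\varepsilon_0}$ overhead of (\ref{1622}); moreover, by (\ref{psijllemglog2nvbjl}), at the opposite endpoint of $[0,N_n]$ we have $\Phi_s\le -\gamma\log_2 n$, and since $N_n\le c\log_2 n$, this forces $|\tilde A_s|\ge \gamma/(2c)$ once $\varepsilon_0$ is sufficiently small. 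The slow-variation condition (\ref{malo_izm331}) yields $\rho(2^{j+\tilde N_n})/\rho(2^{j_*^n+\tilde N_n})\le C_\delta 2^{\delta|j-j_*^n|}$ for any prescribed $\delta>0$, so choosing $\delta<\gamma/(4c)$ makes $\Sigma_s$ a geometric series bounded by $\lesssim\rho(2^{j_*^n+\tilde N_n})$. For $s\notin S$, both endpoint $\psi_n$-values are $\le -\gamma\log_2 n$, so by affineness $\Phi_s(j)\le -\gamma\log_2 n$ on $[0,N_n]$; combined with (\ref{1622}), every summand is at most $n^{-\gamma+\varepsilon_0}\rho(2^{j+\tilde N_n})\le n^{-\gamma+\varepsilon_0+c\delta}\rho(2^{j_*^n+\tilde N_n})$, which is $o(1)$ of the target.

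The main technical obstacle is the uniformity of the slow-variation estimate in $n$ and $\tilde N_n$: (\ref{malo_izm331}) provides the $\delta$-rate only for $y\ge y_0(\delta)$, so when $\tilde N_n$ remains bounded the $j$-sum must be split into a finite initial segment (where $\rho$ is uniformly bounded on a compact subset of $[1,\infty)$) and a tail where (\ref{malo_izm331}) applies directly. This is routine bookkeeping and does not affect the structural argument; the key ingredients are the use of (\ref{psinj}) to pin the main term exactly, the vertex bound (\ref{psijllemglog2nvbjl}) to force a nontrivial affine slope on strips in $S$, and the slow-variation growth of $\rho$ being strictly slower than the geometric decay coming from these slopes.
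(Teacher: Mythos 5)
Your outline follows the paper's own proof quite closely: slow variation of $\rho$ giving $2^{\sigma|j-j_*^n|}$ factors, summing out $l$ strip by strip, exponential decay in $|j-j_*^n|$ for the strips meeting $(j_*^n,\,l_*^n)$ via (\ref{psinj}) and (\ref{psijllemglog2nvbjl}), and the crude bound $\psi_n\le-\gamma\log_2 n$ plus a cardinality count for the remaining strips. Your quantitative observation that the slope of $j\mapsto\psi_n(j,\varphi^n_{t(s)}(j))$ is automatically $\ge\gamma/c$ in magnitude is a nice simplification of the paper's argument, which only extracts the sign of $\lambda_s+\mu_s\alpha_{t(s)}$ from (\ref{lamsl0}). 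However, two sub-cases fail as written. First, your dichotomy is ``$s\in S$ versus the rest,'' and for the rest you claim both boundary values of $\psi_n$ are $\le-\gamma\log_2 n$. This is false for the strip $E^n_{s_--1}$ lying immediately below the distinguished point: its upper boundary $\varphi^n_{s_-}$ passes through $(j_*^n,\,l_*^n)$, where $\psi_n$ equals $0$ by continuity and (\ref{psinjl0}), so the supremum of $\tilde\psi_n$ over $E^n_{s_--1}$ is close to $0$, not $-\gamma\log_2 n$. This strip must be treated like the strips in $S$ (which is why (\ref{psinj}) prescribes the one-sided value $\tilde\psi_n(j_*^n,\,l_*^n-0)$, and why the paper derives $\mu_{s_--1}>0$ and handles $s=s_--1$ alongside $s=s_+$); your ``maximizing endpoint'' machinery does cover it, but only once it is classified correctly.

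Second, for the degenerate wedge strips $s_-\le s<s_+$ you bound the length of the $l$-sum by $O(\log_2 n)$ when $\mu_s=0$. Carried through, this gives $\Sigma_s\lesssim(\log_2 n)\,\rho(2^{j_*^n+\tilde N_n})$, a logarithm worse than the claim, and nothing in your argument recovers this loss. The correct count is that these strips degenerate to the point $(j_*^n,\,l_*^n)$ at $j=j_*^n$, so by (\ref{phins1jphins1jjs}) the width at abscissa $j$ is exactly $|\alpha_{s+1}-\alpha_s|\,|j-j_*^n|$; the length factor is therefore $O(|j-j_*^n|+1)$ and is absorbed by the exponential decay $2^{-c'|j-j_*^n|}$, which is how the paper proceeds. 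Both defects are repairable with ingredients already present in your sketch, but as stated the case analysis does not close.
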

\begin{proof}
First we prove that for any $\sigma >0$ there exists $M(\sigma)>0$
such that for any $n\in \N$, $j=0, \, \dots , \, N_n$ the
following inequalities hold:
\begin{align}
\label{rho2jsigma} \frac{\rho(2^{j+\tilde N_n})}{\rho(2^{\tilde
N_n})}\le M(\sigma)2^{\sigma j}, \;\;\; \frac{\rho(2^{j+\tilde
N_n})}{\rho(2^{N_n+\tilde N_n})}\le M(\sigma)2^{\sigma(N_n-j)}.
\end{align}
Indeed, there exists $\xi _\sigma \ge 1$ such that the function
$y^{\sigma}\rho(y)$ increases on $[\xi _\sigma , \, +\infty)$ and
the function $y^{-\sigma}\rho(y)$ decreases on $[\xi _\sigma , \,
+\infty)$. Since the function $\rho$ is continuous and positive,
then there exists $M_1(\sigma)>0$ such that
$\frac{\rho(y_1)}{\rho(y_2)}\le M_1(\sigma)$ for any $y_1$,
$y_2\in [1, \, \xi _\sigma]$.

Let $2^{\tilde N_n}\ge \xi _\sigma$. Then
$$
\frac{\rho(2^{j+\tilde N_n})}{\rho(2^{\tilde
N_n})}=\frac{\rho(2^{j+\tilde N_n})2^{-\sigma(j+\tilde
N_n)}}{\rho(2^{\tilde N_n})2^{-\sigma \tilde N_n}}\cdot 2^{\sigma
j}\le 2^{\sigma j}.
$$
Let $2^{\tilde N_n}<\xi_\sigma$. If $2^{j+\tilde N_n}\le \xi
_\sigma$, then $\frac{\rho(2^{j+\tilde N_n})}{\rho(2^{\tilde
N_n})}\le M_1(\sigma)$. If $2^{j+\tilde N_n}>\xi_\sigma$, then
$$
\frac{\rho(2^{j+\tilde N_n})}{\rho(2^{\tilde N_n})}\le M_1(\sigma)
\frac{\rho(2^{j+\tilde N_n})}{\rho(\xi_\sigma)}=$$$$=M_1(\sigma)
\frac{\rho(2^{j+\tilde N_n})2^{-\sigma(j+\tilde
N_n)}}{\rho(\xi_\sigma)\xi_\sigma^{-\sigma}}\cdot 2^{\sigma
j}\xi_\sigma ^{-\sigma}\cdot 2^{\sigma\tilde N_n}\le
M_1(\sigma)2^{\sigma j}.
$$

If $2^{\tilde N_n+j}\ge \xi_\sigma$, then
$$
\frac{\rho(2^{j+\tilde N_n})}{\rho(2^{N_n+\tilde
N_n})}=\frac{2^{\sigma (j+\tilde N_n)}\rho(2^{j+\tilde N_n})}{
2^{\sigma (N_n+\tilde N_n)}\rho(2^{N_n+\tilde N_n})}\cdot
2^{\sigma (N_n-j)}\le 2^{\sigma (N_n-j)}.
$$
Let $2^{\tilde N_n+j}<\xi_\sigma$. If in addition $2^{N_n+\tilde
N_n}\le\xi _\sigma$, then $\frac{\rho(2^{j+\tilde
N_n})}{\rho(2^{N_n+\tilde N_n})} \le M_1(\sigma)$. If
$2^{N_n+\tilde N_n}>\xi _\sigma$, then
$$
\frac{\rho(2^{j+\tilde N_n})}{\rho(2^{N_n+\tilde N_n})}\le
M_1(\sigma) \frac{\rho(\xi _\sigma)}{\rho(2^{N_n+\tilde
N_n})}=M_1(\sigma) \frac{\xi _\sigma ^\sigma\rho(\xi
_\sigma)}{2^{\sigma (N_n+\tilde N_n)}\rho(2^{N_n+\tilde N_n})}
\cdot 2^{\sigma (N_n+\tilde N_n)}\xi _\sigma ^{-\sigma}\le
$$
$$
\le M_1(\sigma)\cdot 2^{\sigma (N_n+\tilde N_n)}\cdot
2^{-\sigma(\tilde N_n+j)}=M_1(\sigma)\cdot 2^{\sigma(N_n-j)}.
$$
This completes the proof of (\ref{rho2jsigma}).

Let $s_--1\le s\le s_+$, $0\le j\le N_n$, $\varphi ^n_s(j)\le
l<\varphi ^n_{s+1}(j)$. Denote
$$
t(s)=\left\{ \begin{array}{l} s,\text{ if }\tilde \mu_s\le 0 \text{ and } s_-\le s<s_+,\\
s+1,\text{ if }\tilde \mu_s>0\text{ and } s_-\le s<s_+,\\ s_+,
\text{ if }s=s_+,
\\ s_-, \text{ if }s=s_--1,\end{array}\right.
$$
$$
A=\max\{|\alpha_s|:s_-\le s\le s_+\},
$$
\begin{align}
\label{est_eps} \varepsilon _0=\min \left\{\frac{|\mu _m|}{2}, \,
\frac{\gamma}{4(c+1)}, \; \frac{|\mu _s|}{2}, \; \frac{|\lambda
_s+\mu _s\alpha _{t(s)}|}{8(A+1)}, \; s_- -1\le s\le s_+\right\}.
\end{align}

Let us obtain an upper estimate for the sum
$$
\Sigma _s^n=\sum \limits _{(j, \, l)\in \Z^2\cap
E^n_s}2^{\tilde\psi _n(j, \, l)}\rho(2^{j+\tilde N_n}), \;\; 1\le
s\le m.
$$
Let $S\ne \varnothing$. Since (\ref{psinjl0}) holds and functions
$\psi_n$ are continuous, we have
$$
\psi_n(j, \, \varphi ^n_{t(s)}(j))=
(\lambda_s+\mu_s\alpha_{t(s)})(j-j_*^n), \;\;\;s\in
S\cup(\{s_--1\}\cap \N).
$$
This together with (\ref{psijllemglog2nvbjl}) yield that
\begin{align}
\label{lamsl0} \lambda _s+ \mu _s\alpha _{t(s)}<0, \text{ if }
j^n_*=0, \;\;\;\;\lambda _s+ \mu _s\alpha _{t(s)}>0, \text{ if }
j^n_*=N_n.
\end{align}
Therefore, $\varepsilon_0>0$.

First we consider cases $s=s_+$ and $s=s_--1$. It follows from
(\ref{psinjl0}), from the condition $\mu_m<0$ and from
(\ref{psijllemglog2nvbjl}) that $\mu _{s_- -1}>0$ (if $s_-\ge 2$),
$\mu _{s_+}<0$. Then for any $(j, \, l)\in E^n_{s_+}$ we have
$$
\tilde \psi _n(j, \, l)=\tilde \lambda _{s_+} j +\tilde \mu _{s_+}
l +\tilde\nu _{s_+,n}=\tilde \lambda _{s_+}j+\tilde \mu _{s_+}\varphi ^n_{s_+}(j)+\tilde\nu _{s_+,n}+
\tilde \mu _{s_+}(l-\varphi ^n_{s_+}(j))\stackrel{(\ref{est_eps})}{\le}
$$
$$
\le \tilde \lambda _{s_+}j+\tilde \mu _{s_+}\varphi ^n_{s_+}(j)+\tilde\nu _{s_+,n}+
\mu _{s_+}(l-\varphi ^n_{s_+}(j))+\frac{|\mu _{s_+}|}{2}(l-\varphi ^n_{s_+}(j))=
$$
$$
=\tilde \lambda _{s_+} j +\tilde \mu
_{s_+} \varphi _{s_+}^n(j)+\tilde\nu _{s_+,n}-\frac{|\mu _{s_+}|}{2}(l-\varphi
_{s_+}^n(j)).
$$
Hence,
$$
\sum \limits _{(j, \, l)\in \Z^2\cap E^n_{s_+}}2^{\tilde\psi _n(j,
\, l)}\rho(2^{j+\tilde N_n}) \underset{\mu _{s_+}}{\lesssim} \sum
\limits _{j=0}^{N_n} 2^{\tilde \lambda _{s_+} j+ \tilde \mu
_{s_+}\varphi _{s_+}^n(j)+\tilde\nu _{s_+,n}}\rho(2^{j+\tilde
N_n}).
$$
Similarly, we obtain that for $s_-\ge 2$
$$
\sum \limits _{(j, \, l)\in \Z^2\cap E^n_{s_--1}}2^{\tilde\psi
_n(j, \, l)}\rho(2^{j+\tilde N_n}) \underset{\mu
_{s_--1}}{\lesssim} \sum \limits _{j=0}^{N_n} 2^{\tilde \lambda
_{s_--1} j+ \tilde \mu _{s_--1}\varphi _{s_-}^n(j)+\tilde\nu
_{s_--1,n}}\rho(2^{j+\tilde N_n}).
$$

Since the functions $\tilde \psi _n|_{E^n_{s_+}}$, $\tilde \psi
_n|_{E^n_{s_--1}}$ and $\varphi ^n_{s_\pm}$ are affine, then
conditions of Lemma yield
$$
\tilde \lambda _{s_+} j+\tilde \mu _{s_+}\varphi _{s_+}^n(j)+
\tilde\nu _{s_+,n}=\tilde \psi _n(j, \, \varphi _{s_+}^n(j))
\stackrel{(\ref{sphinsln}), (\ref{psinj})}{=}
$$
$$
=\tilde \psi _n(j, \, \varphi _{s_+}^n(j))- \tilde \psi _n(j_*^n,
\, \varphi _{s_+}^n(j_*^n))+ \psi _n(j_*^n, \,
l^n_*)\stackrel{(\ref{psinjl0})}{=}
$$
$$
=\tilde \psi _n(j, \, \varphi _{s_+}^n(j))- \tilde \psi _n(j_*^n,
\, \varphi _{s_+}^n(j_*^n))= (\tilde \lambda _{s_+}+\tilde \mu
_{s_+}\alpha _{s_+})(j-j_*^n)\stackrel{(\ref{lstlse})}{\le}
$$
$$
\le(\lambda _{s_+}+\mu _{s_+}\alpha
_{s_+})(j-j_*^n)+\varepsilon_0(1+A)|j-j_*^n|,
$$
$$
\tilde \lambda _{s_--1} j+\tilde \mu _{s_--1}\varphi _{s_-}^n(j)+
\tilde\nu _{s_--1,n}=\tilde \psi _n(j, \, \varphi _{s_-}^n(j)-0)
\stackrel{(\ref{sphinsln}), (\ref{psinj})}{=}
$$
$$
=\tilde \psi _n(j, \, \varphi _{s_-}^n(j)-0)- \tilde \psi
_n(j_*^n, \, \varphi _{s_-}^n(j_*^n)-0)+ \psi _n(j_*^n, \,
l^n_*)\stackrel{(\ref{psinjl0})}{=}
$$
$$
=\tilde \psi _n(j, \, \varphi _{s_-}^n(j)-0)- \tilde \psi
_n(j_*^n, \, \varphi _{s_-}^n(j_*^n)-0)= (\tilde \lambda
_{s_--1}+\tilde \mu _{s_--1}\alpha
_{s_-})(j-j_*^n)\stackrel{(\ref{lstlse})}{\le}
$$
$$
\le(\lambda _{s_--1}+\mu _{s_--1}\alpha
_{s_-})(j-j_*^n)+\varepsilon_0(1+A)|j-j_*^n|.
$$

By (\ref{est_eps}) and (\ref{lamsl0}),
\begin{align}
\label{kjhbcrjhb53627} 2^{\tilde \lambda _{s_+} j+\tilde \mu
_{s_+}\varphi _{s_+}^n(j) +\tilde\nu _{s_+,n}}\le
2^{-\frac{1}{2}|(\lambda _{s_+}+\mu _{s_+}\alpha
_{s_+})(j-j_*^n)|},
\end{align}
\begin{align}
\label{kjhbcrjhb53628} 2^{\tilde \lambda _{s_--1} j+\tilde \mu
_{s_--1}\varphi _{s_-}^n(j) +\tilde\nu _{s_--1,n}}\le
2^{-\frac{1}{2}|(\lambda _{s_--1}+\mu _{s_--1}\alpha
_{s_-})(j-j_*^n)|}.
\end{align}

Let $j_*^n=0$. Then
$$
\sum \limits _{j=0}^{N_n} 2^{\tilde \lambda _{s_+} j+ \tilde \mu
_{s_+}\varphi _{s_+}^n(j)+\tilde\nu _{s_+,n}}\rho(2^{j+\tilde
N_n}) \stackrel{(\ref{rho2jsigma}),(\ref{kjhbcrjhb53627})}
{\le}$$$$\le M(\varepsilon_0)\rho(2^{\tilde N_n})\sum \limits
_{j=0}^{N_n} 2^{-\frac{1}{2}|(\lambda _{s_+}+\mu _{s_+}\alpha
_{s_+})j|}\cdot 2^{\varepsilon_0 j}
\stackrel{(\ref{est_eps})}{\underset{\Lambda
,c,\gamma}{\lesssim}}\rho(2^{\tilde N_n}),
$$
$$
\sum \limits _{j=0}^{N_n} 2^{\tilde \lambda _{s_--1} j+ \tilde \mu
_{s_--1}\varphi _{s_-}^n(j)+\tilde\nu _{s_--1,n}}\rho(2^{j+\tilde
N_n}) \stackrel{(\ref{rho2jsigma}),(\ref{kjhbcrjhb53628})}
{\le}$$$$\le M(\varepsilon_0)\rho(2^{\tilde N_n})\sum \limits
_{j=0}^{N_n} 2^{-\frac{1}{2}|(\lambda _{s_--1}+\mu _{s_--1}\alpha
_{s_-})j|}\cdot 2^{\varepsilon_0 j}
\stackrel{(\ref{est_eps})}{\underset{\Lambda
,c,\gamma}{\lesssim}}\rho(2^{\tilde N_n}).
$$
If $j_*^n=N_n$, then
$$
\sum \limits _{j=0}^{N_n} 2^{\tilde \lambda _{s_+} j+ \tilde \mu
_{s_+}\varphi _{s_+}^n(j)+\tilde\nu _{s_+,n}}\rho(2^{j+\tilde
N_n}) \stackrel{(\ref{rho2jsigma}),(\ref{kjhbcrjhb53627})} {\le}
$$
$$
\le M(\varepsilon_0)\rho(2^{N_n+\tilde N_n}) \sum \limits
_{j=0}^{N_n}2^{-\frac{1}{2}|(\lambda _{s_+}+\mu _{s_+} \alpha
_{s_+})(j-N_n)|}\cdot2^{\varepsilon _0(N_n-j)}
\stackrel{(\ref{est_eps})}{\underset{\Lambda
,c,\gamma}{\lesssim}}\rho(2^{N_n+\tilde N_n}),
$$
$$
\sum \limits _{j=0}^{N_n} 2^{\tilde \lambda _{s_--1} j+ \tilde \mu
_{s_--1}\varphi _{s_-}^n(j)+\tilde\nu _{s_--1,n}}\rho(2^{j+\tilde
N_n}) \stackrel{(\ref{rho2jsigma}),(\ref{kjhbcrjhb53628})}{\le}
$$
$$
\le M(\varepsilon_0)\rho(2^{N_n+\tilde N_n}) \sum \limits
_{j=0}^{N_n}2^{-\frac{1}{2}|(\lambda _{s_--1}+\mu _{s_--1} \alpha
_{s_-})(j-N_n)|}\cdot 2^{\varepsilon _0(N_n-j)}
\stackrel{(\ref{est_eps})}{\underset{\Lambda
,c,\gamma}{\lesssim}}\rho(2^{N_n+\tilde N_n}).
$$
Let $s_-\le s<s_+$. Since $\varphi
^n_{s+1}(j_*^n)\stackrel{(\ref{sphinsln})}{=} \varphi
^n_s(j_*^n)$, then
\begin{align}
\label{phins1jphins1jjs} \varphi ^n_{s+1}(j)-\varphi ^n_s(j)=
\alpha _{s+1}j+\beta _{s+1,n}-\alpha _s j -\beta _{s,n}=(\alpha
_{s+1}-\alpha _s)(j-j_*^n).
\end{align}
It follows from (\ref{sphinsln}), (\ref{psinj}) and
(\ref{psinjl0}) that $\tilde \lambda _s j_*^n+\tilde \mu _s\varphi
_{t(s)}^n(j_*^n) +\tilde\nu _{s,n}=\psi_n(j_*^n, \, l_*^n)=0$.
Therefore,
$$
\tilde \lambda _s j+\tilde \mu _s\varphi _{t(s)}^n(j) +\tilde\nu
_{s,n}=(\tilde\lambda_s+\tilde\mu_s\alpha_{t(s)})(j-j_*^n)\stackrel{(\ref{lstlse})}{\le}
$$
$$
\le (\lambda_s+\mu_s\alpha_{t(s)})(j-j_*^n)+
\varepsilon_0(1+A)|j-j_*^n|.
$$
By (\ref{est_eps}) and (\ref{lamsl0}),
\begin{align}
\label{akjhbcrjhb53627} 2^{\tilde \lambda _s j+\tilde \mu
_s\varphi _{t(s)}^n(j) +\tilde\nu _{s,n}}(|j-j_*^n|+1)
\underset{\Lambda}{\lesssim} 2^{-\frac{|(\lambda _s+\mu _s\alpha
_{t(s)})(j-j_*^n)|}{2}}.
\end{align}
This together with an inequality
$$
\sum \limits _{\varphi _s^n(j)\le l<\varphi ^n_{s+1}(j)}2^{\tilde
\mu _sl}\le 2^{\tilde \mu _s\varphi ^n_{t(s)}(j)}(\varphi
^n_{s+1}(j)-\varphi ^n_s(j)+1)
$$
imply that
$$
\sum \limits _{(j, \, l)\in \Z^2\cap E^n_s}2^{\tilde\psi _n(j, \,
l)}\rho(2^{j+\tilde N_n})\le \sum \limits _{j=0}^{N_n}2^{\tilde
\lambda _s j+\tilde \mu _s\varphi _{t(s)}^n(j) +\tilde\nu _{s,n}}
(\varphi ^n_{s+1}(j)-\varphi ^n_s(j)+1)\rho(2^{j+\tilde N_n})
\stackrel{(\ref{phins1jphins1jjs})}{\underset{\Lambda}{\lesssim}}
$$
$$
\lesssim \sum \limits _{j=0}^{N_n}2^{\tilde \lambda _s j+ \tilde
\mu _s\varphi _{t(s)}^n(j) +\tilde\nu
_{s,n}}(|j-j_*^n|+1)\rho(2^{j+\tilde N_n})
\stackrel{(\ref{akjhbcrjhb53627})}
{\underset{\Lambda}{\lesssim}}\sum \limits _{j=0}^{N_n}
2^{-\frac{|(\lambda _s+\mu _s\alpha _{t(s)})(j-j_*^n)|}{2}}
\rho(2^{j+\tilde N_n}) \stackrel{(\ref{rho2jsigma})}{\le}
$$
$$
\le M(\varepsilon_0)\sum \limits _{j=0}^{N_n}2^{-\frac{|(\lambda
_s+ \mu _s\alpha _{t(s)})(j-j_*^n)|}{2}}2^{\varepsilon_0
|j-j_*^n|}\rho(2^{j_*^n+\tilde N_n})
\stackrel{(\ref{est_eps})}{\underset{\Lambda,
\gamma,c}{\lesssim}}\rho(2^{j_*^n+\tilde N_n}).
$$

Let $s\notin S$, $s\ne s_--1$ and $s<m$. It follows from
(\ref{nnleclog2nphin}) that
\begin{align}
\label{estcardsum} {\rm card}\, \Z ^2\cap E^n_s
\underset{c}{\lesssim}(\log _2 n)^2.
\end{align}
Let $(\hat j, \, \hat l)$ be the maximum point of the function
$\tilde \psi _n$ on the set $\Z^2\cap E^n_s$. Then
$$
\sum \limits _{(j, \, l)\in \Z^2\cap E^n_s}2^{\tilde\psi _n(j, \,
l)}\rho(2^{j+\tilde N_n})
\stackrel{(\ref{rho2jsigma}),(\ref{estcardsum})}{\underset{c}{\lesssim}}
M(\varepsilon_0)2^{\varepsilon _0N_n}(\log _2 n)^2\cdot 2^{\tilde
\psi _n (\hat j, \, \hat l)}\rho(2^{\tilde N_n+j_*^n}).
$$
Since $\tilde \Psi\in {\cal O}_{\varepsilon _0,J}(\Psi)$, then
$$
|\tilde \psi _n(\hat j, \, \hat l) -\psi _n(\hat j, \,\hat
l)|\stackrel{(\ref{1622})}{\le} \varepsilon _0\log _2 n.
$$
Since $s\notin S$, $s\ne s_--1$ and $s<m$, then
$$
\{(0, \, \varphi^n_s(0)), \, (0, \, \varphi^n_{s+1}(0)), \, (N_n,
\, \varphi^n_s(N_n)), \, (N_n, \, \varphi^n_{s+1}(N_n))\} \subset
V_n\backslash (j_*^n, \, l_*^n).
$$
The function $\psi_n|_{E^n_s}$ is affine, therefore,
$$
\sup_{(\xi, \, \eta)\in E^n_s}\psi_n(\xi, \, \eta)\le \max \left\{
\psi_n(0, \, \varphi^n_s(0)), \, \psi_n(0, \,
\varphi^n_{s+1}(0)),\right.$$$$\left.\psi_n(N_n, \,
\varphi^n_s(N_n)), \, \psi_n(N_n, \,
\varphi^n_{s+1}(N_n))\right\}\stackrel{(\ref{psijllemglog2nvbjl})}{\le}
-\gamma \log_2n.
$$
Hence,
$$
\Sigma
^n_s\stackrel{(\ref{nnleclog2nphin})}{\underset{c}{\lesssim}}
M(\varepsilon_0)n^{c\varepsilon_0}(\log _2 n)^2 2^{-\gamma\log _2
n}2^{\varepsilon _0\log _2 n}\rho(2^{\tilde N_n+j_*^n})
\underset{c,\Lambda,\gamma}{\lesssim}
$$
$$
\lesssim n^{-\gamma+\varepsilon_0(c+1)}(\log _2 n)^2\rho(2^{\tilde
N_n+j_*^n}) \stackrel{(\ref{est_eps})} {\le} \rho(2^{\tilde
N_n+j_*^n})
$$
for sufficiently large $n$.

It remains to consider the case $s=m$, $s\notin S$. Since $\mu
_m<0$, then
$$
\Sigma ^n _m\underset{\Lambda}{\lesssim}\sum \limits _{j=0}^{N_n}
2^{\tilde \psi _n(j, \, \varphi _m^n(j))}\rho(2^{j+\tilde N_n})\le
\sum \limits _{j=0}^{N_n} 2^{\psi _n(j, \, \varphi
_m^n(j))+|\tilde\psi _n(j, \, \varphi _m^n(j))-\psi _n(j, \,
\varphi _m^n(j))|}\rho(2^{j+\tilde
N_n})\stackrel{(\ref{1622}),(\ref{nnleclog2nphin}),
(\ref{psijllemglog2nvbjl}), (\ref{rho2jsigma})}{\le}
$$
$$
\le M(\varepsilon_0)2^{\varepsilon _0N_n}(c\log _2n)2^{-\gamma
\log_2n+\varepsilon_0 \log_2n}\rho(2^{\tilde N_n+j_*^n})
\stackrel{(\ref{nnleclog2nphin})}
{\underset{\Lambda,c,\gamma}{\lesssim}}n^{-\gamma+\varepsilon_0(c+1)}
(\log_2 n)\rho(2^{\tilde
N_n+j_*^n})\stackrel{(\ref{est_eps})}{\le} \rho(2^{\tilde
N_n+j_*^n})
$$
for sufficiently large $n$.
\end{proof}
Define the spaces $X_1$, $\tilde X_1$, $X_2$ and projections
$P_{{\cal N}}$ by (\ref{x1}), (\ref{tilde_x1}), (\ref{x2}) and
(\ref{def_of1_pmn}), respectively.
\begin{Sta}
\label{add} Let $n\in \Z_+$, $n_k\in \Z_+$, ${\cal N}\subset
\Z_+\times \Z^d$, ${\cal N}_k\subset \Z_+\times \Z^d$, $1\le k\le
m$, $n=\sum \limits _{k=1}^m n_k$, ${\cal N}\subset \cup
_{k=1}^m{\cal N}_k$, $\vartheta _{j,0}=d_j$,
$\vartheta_{j,1}={\cal A}_j$, $j\in \Z_+$. Then for $s\in \{0, \,
1\}$ we have
$$
\vartheta_{n,s}(P_{{\cal N}}:X_1\rightarrow X_2)\le \sum \limits
_{k=1}^m\vartheta _{n_k,s}(P_{{\cal N}_k}:X_1\rightarrow X_2).
$$
\end{Sta}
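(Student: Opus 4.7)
The plan is to exploit the fact that $P_{{\cal N}}$ admits a natural decomposition as a sum of disjoint coordinate restrictions, each of which factors through the corresponding $P_{{\cal N}_k}$. Define the partition ${\cal M}_k := ({\cal N}\cap {\cal N}_k)\setminus \bigcup_{j<k}{\cal N}_j$, $k=1,\dots,m$. Since ${\cal N}\subset \bigcup_{k=1}^m {\cal N}_k$, the sets ${\cal M}_k$ are pairwise disjoint with union ${\cal N}$, so $P_{{\cal N}} = \sum_{k=1}^m P_{{\cal M}_k}$. Moreover ${\cal M}_k\subset {\cal N}_k$ implies the identity $P_{{\cal M}_k} = P_{{\cal M}_k}P_{{\cal N}_k}$, and from the definition \eqref{x2} of $\|\cdot\|_{X_2}$ the restriction $P_{{\cal M}_k}$ is a contraction on $X_2$ (zeroing coordinates can only decrease the norm).

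For the approximation numbers ($s=1$), fix $\varepsilon>0$ and, for each $k$, choose $A_k\in L(X_1,X_2)$ with ${\rm rk}\, A_k\le n_k$ and $\|P_{{\cal N}_k}-A_k\|_{X_1\to X_2}\le {\cal A}_{n_k}(P_{{\cal N}_k}:X_1\to X_2)+\varepsilon$. Set $A:=\sum_{k=1}^m P_{{\cal M}_k}A_k$, so ${\rm rk}\, A\le n$. Using $P_{{\cal M}_k}x = P_{{\cal M}_k}P_{{\cal N}_k}x$, we obtain
$$
(P_{{\cal N}}-A)x \;=\; \sum_{k=1}^m P_{{\cal M}_k}\bigl(P_{{\cal N}_k}x - A_k x\bigr),
$$
whence the triangle inequality together with $\|P_{{\cal M}_k}\|_{X_2\to X_2}\le 1$ gives $\|P_{{\cal N}}-A\|_{X_1\to X_2}\le \sum_k ({\cal A}_{n_k}(P_{{\cal N}_k}:X_1\to X_2)+\varepsilon)$. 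Letting $\varepsilon\to 0$ completes this case.

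For the Kolmogorov widths ($s=0$), fix $\varepsilon>0$ and pick subspaces $L_k\subset X_2$ with $\dim L_k\le n_k$ that nearly realize $d_{n_k}(P_{{\cal N}_k}:X_1\to X_2)$. Define $L:=\sum_{k=1}^m P_{{\cal M}_k}L_k\subset X_2$; then $\dim L\le n$. Given $x\in B_{X_1}$, select $y_k\in L_k$ with $\|P_{{\cal N}_k}x-y_k\|_{X_2}\le d_{n_k}(P_{{\cal N}_k}:X_1\to X_2)+\varepsilon$ and set $y:=\sum_k P_{{\cal M}_k}y_k\in L$; the same identity $P_{{\cal N}}x-y=\sum_k P_{{\cal M}_k}(P_{{\cal N}_k}x-y_k)$ and the triangle inequality bound $\|P_{{\cal N}}x-y\|_{X_2}$ by the sum of the individual errors, and $\varepsilon\to 0$ yields the claim.

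No serious obstacle is anticipated: this is the standard subadditivity/concatenation argument for widths, and it works here precisely because the coordinate restrictions $P_{{\cal M}_k}$ are contractions on $X_2$ and satisfy $P_{{\cal M}_k} = P_{{\cal M}_k}P_{{\cal N}_k}$, so the near-optimal approximants for the individual $P_{{\cal N}_k}$ can be pasted together without any loss in dimension or in norm.
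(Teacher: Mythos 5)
Your proof is correct and follows essentially the same route as the paper: the same disjoint decomposition ${\cal M}_k$ (the paper's ${\cal N}'_k$), the identity $P_{{\cal N}}=\sum_k P_{{\cal M}_k}$, monotonicity under passing to the subset ${\cal M}_k\subset{\cal N}_k$, and subadditivity of $d_n$ and ${\cal A}_n$ under sums. The only difference is that you spell out the subadditivity and monotonicity steps by explicitly pasting together near-optimal approximants, whereas the paper simply cites these standard facts together with \eqref{proj} and \eqref{proj1}.
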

\begin{proof}
Let ${\cal N}'_1={\cal N}_1\cap {\cal N}$, ${\cal
N}'_k=\left({\cal N}_k\backslash \cup _{i=1}^{k-1} {\cal
N}_i\right)\cap {\cal N}$. Then for any $k=1, \, \dots, \, m$
$$\vartheta _{n_k,s}(P_{{\cal N}'_k}:X_1\rightarrow
X_2)\stackrel{(\ref{proj}),(\ref{proj1})}{\le} \vartheta
_{n_k,s}(P_{{\cal N}_k}:X_1\rightarrow X_2).$$ Therefore, the
assertion follows from inequalities $d_{n'+n''}(M'+M'', \, X_2)\le
d_{n'}(M', \, X_2)+d_{n''}(M'', \, X_2)$, ${\cal
A}_{n'+n''}(T'+T'':X_1\rightarrow X_2)\le {\cal
A}_{n'}(T':X_1\rightarrow X_2)+{\cal A}_{n''}(T'':X_1\rightarrow
X_2)$, from the inclusion $P_{{\cal N}}(B_{X_1})\subset \sum
_{k=1}^m P_{{\cal N}'_k}(B_{X_1})$ and from the equality $P_{{\cal
N}}=\sum \limits _{k=1}^m P_{{\cal N}'_k}$.
\end{proof}

Let $E$ be a finite set, $F:E\rightarrow \R$. Denote
\begin{align}
\label{argmax}
{\rm Argmax}\,\{F(x):x\in E\}=\left\{y\in E:F(y)=\max _{x\in E}F(x)\right\}.
\end{align}

\renewcommand{\proofname}{\bf Proof of Theorem \ref{main}}
\begin{proof}
Without loss of generality we may assume that $|(x_1, \, \dots, \,
x_d)|=\max _{1\le i\le d}|x_i|$.

By (\ref{com_dia}),
$$
d_n({\rm Id}:B^{s_1}_{p_1,q_1}(\R^d, \, w_1)\rightarrow B^{s_2}_{p_2,q_2}(\R^d, \,
w_2))\overset{n}{\asymp} d_n({\rm id}:X_1\rightarrow X_2),
$$
$$
{\cal A}_n({\rm Id}:B^{s_1}_{p_1,q_1}(\R^d, \, w_1)\rightarrow B^{s_2}_{p_2,q_2}(\R^d, \,
w_2))\overset{n}{\asymp} {\cal A}_n({\rm id}:X_1\rightarrow X_2).
$$
If $d_n({\rm Id}:B^{s_1}_{p_1,q_1}(\R^d, \, w_1)\rightarrow
B^{s_2}_{p_2,q_2}(\R^d, \,
w_2))\underset{n\rightarrow\infty}{\rightarrow} 0$, then the
operator ${\rm Id}$ is compact and, therefore, by (\ref{aneqln})
we have
$$
{\cal A}_n({\rm Id}:B^{s_1}_{p_1,q_1}(\R^d, \, w_1)\rightarrow B^{s_2}_{p_2,q_2}(\R^d, \,
w_2))=\lambda_n({\rm Id}:B^{s_1}_{p_1,q_1}(\R^d, \, w_1)\rightarrow B^{s_2}_{p_2,q_2}(\R^d, \,
w_2)).
$$

Denote
$$
{\cal N}=\left\{(\nu , \, {\bf m}):2^{-\nu}{\bf m}\in \left[-\frac 12, \,
\frac 12\right]^d\right\}, \;\;{\cal N}^*=\left\{(\nu , \, {\bf m}):2^{-\nu}{\bf m}\notin \left[-\frac 12, \,
\frac 12\right]^d\right\}.
$$

The following estimate was proved in \cite{haroske} (see Lemma
4.2):
$$
{\cal A}_n(P_{{\cal N}^*}:X_1\rightarrow
X_2)\overset{n}{\asymp}n^{-\tilde\theta _1}.
$$
Similarly, one can prove that
$$
d_n(P_{{\cal N}^*}:X_1\rightarrow
X_2)\overset{n}{\asymp}n^{-\theta _1}
$$
(all arguments are the same, and instead of approximation numbers
one takes Kolmogorov widths; see also \cite{zhang1}). Therefore,
it is enough to obtain the estimate of $d_n(P_{{\cal
N}}:X_1\rightarrow X_2)$ and ${\cal A}_n(P_{{\cal N}}:X_1
\rightarrow X_2)$.

Let us obtain the order estimate of $2^{-\nu(s_1-s_2)}w_1(Q_{\nu ,{\bf
m}})^{-1/p_1}w_2(Q_{\nu ,{\bf m}})^{1/p_2}$ for $(\nu, \, {\bf
m})\in {\cal N}$. If ${\bf m}\ne 0$ and $x\in Q_{\nu , {\bf m}}$,
then
$$
w_1(x)\overset{x,\nu,{\bf m}}{\asymp}2^{-\nu \beta_g p_1}|{\bf m}|^{\beta_g
p_1}\left(\log _2\frac{2^\nu}{|{\bf m}|}\right)^{p_1\alpha _g}\rho
_g^{-p_1}\left(\log _2\frac{2^\nu}{|{\bf m}|}\right),
$$
$$
w_2(x)\overset{x,\nu,{\bf m}}{\asymp}2^{\nu \beta_v
p_2}|{\bf m}|^{-\beta_v p_2}\left(\log _2
\frac{2^\nu}{|{\bf m}|}\right)^{-p_2\alpha _v}\rho _v^{p_2}\left(\log_2
\frac{2^\nu}{|{\bf m}|}\right).
$$
Hence,
\begin{align}
\label{norm} 2^{-\nu (s_1-s_2)}w_1(Q_{\nu ,{\bf
m}})^{-1/p_1}w_2(Q_{\nu ,{\bf m}})^{1/p_2}\overset{\nu,{\bf
m}}{\asymp}|{\bf m}|^{-\delta}\left(\log _2\frac{2^\nu}{|{\bf
m}|}\right)^{-\alpha}\rho\left(\log _2\frac{2^\nu}{|\bf m|}\right).
\end{align}
If ${\bf m}=0$, then by (\ref{ilbw1ovrsrrbp1d}) and
(\ref{ilbw2ovrsrrbp2d}) the following  order equalities hold:
$$
w_1(Q_{\nu,0})=\int \limits _{Q_{\nu ,0}}x^{\beta_g p_1}|\log_2
|x||^{p_1\alpha _g}\rho _g^{-p_1}(|\log_2 |x||)\,
dx\overset{\nu}{\asymp} \left .r^{\beta_g p_1+d}|\log_2
r|^{p_1\alpha _g}\rho _g^{-p_1}(|\log_2 r|)\right |_{r=2^{-\nu}},
$$
$$
w_2(Q_{\nu,0})=\int \limits _{Q_{\nu ,0}}x^{-\beta_v p_2}|\log_2
|x||^{-p_2\alpha _v}\rho_v^{p_2}(|\log_2 |x||)\,
dx\overset{\nu}{\asymp} \left .r^{-\beta_v p_2+d}|\log_2
r|^{-p_2\alpha _v}\rho _v^{p_2}(|\log_2 r|)\right |_{r=2^{-\nu}}.
$$
Therefore,
\begin{align}
\label{norm1} 2^{-\nu (s_1-s_2)}w_1(Q_{\nu ,{\bf
m}})^{-1/p_1}w_2(Q_{\nu ,{\bf m}})^{1/p_2}\overset{\nu}{\lesssim} \nu
^{-\alpha}\rho(\nu).
\end{align}

{\it Proof of an upper estimate.} Let $\varepsilon >0$ (it will be chosen later).

Note that it is enough to consider $n=2^{Nd}$, where $N\in \Z_+$. Let us construct the
covering of the set ${\cal N}$ for any $n$.

{\bf Step 1.} Define the covering of the set $\{(\nu , \, {\bf m})\in
{\cal N}:{\bf m}\ne 0\}$.

Let $0\le j\le Nd$,
\begin{align}
\label{defcj} c_N(j)=\varepsilon j \text{ or }c_N(j)=\varepsilon
(Nd-j)
\end{align}
(the choice will be made later). For $2^j\le t<2^{j+1}$, $l\in \Z$
we put $$\nu _t(l)=N+t+l-\left[\frac jd\right]-[c_N(j)],$$
$$
{\cal M}^1_{\nu _t(l)}=\{ {\bf m}\in \Z^d:2^{-\nu _t(l)}{\bf m}\in
[-2^{-t}, \, 2^{-t}]^d\backslash [-2^{-t-1}, \, 2^{-t-1}]^d\}.
$$
Denote
$$
{\cal N}^1_{j,l}=\{(\nu _t(l), \, {\bf m}):2^j\le t<2^{j+1}, \;
{\bf m}\in {\cal M}^1_{\nu _t(l)}, \; \nu_t(l)\ge 0\}.
$$

Let us remark that if ${\bf m}\in {\cal M}^1_{\nu _t(l)}$, then
\begin{align}
\label{por_m_mnuk} |{\bf m}|\underset{d}{\asymp} 2^{-t}\cdot
2^{\nu _t(l)}=2^{N+l-\left[\frac jd\right] -[c_N(j)]},
\end{align}
and if $N+l-\left[\frac jd\right] -[c_N(j)]\ge 0$ (i.e.
${\cal M}_{\nu _t(l)}^1\ne \varnothing$), then
\begin{align}
\label{card_mnuk} {\rm card}\, {\cal M}^1_{\nu
_t(l)}=\left(2^{N+l-\left[\frac jd\right] -[c_N(j)]+1}+1\right)^d-
\left(2\cdot\left[2^{N+l-\left[\frac jd\right]
-[c_N(j)]-1}\right]+1\right)^d \underset{d}{\asymp}
2^{Nd+ld-j-[c_N(j)]d}.
\end{align}
In addition,
\begin{align}
\begin{array}{c}
\displaystyle \label{rk_p} \sum \limits _{j=0}^{Nd}\sum \limits
_{l<0}{\rm rk}\, P_{{\cal N}^1_{j,l}}=\sum \limits _{j=0}^{Nd}\sum
\limits _{l<0} \sum \limits _{2^j\le t<2^{j+1}}{\rm card}\, {\cal
M}^1_{\nu _t(l)}
\stackrel{(\ref{card_mnuk})}{\underset{d}{\lesssim}} \\
\lesssim \sum \limits _{j=0}^{Nd}\sum \limits _{l<0}2^j\cdot
2^{Nd+ld-j-[c_N(j)]d}\underset{d}{\asymp} n\sum \limits
_{j=0}^{Nd}2^{-[c_N(j)]d}\underset{\varepsilon, \,
d}{\stackrel{(\ref{defcj})}{\asymp}} n.
\end{array}
\end{align}

Let $j$, $l\in \Z_+$. For $2^{j+Nd}\le t<2^{j+1+Nd}$ we put
$\tilde\nu _t(l)=t+l$,
$$
{\cal M}^2_{\tilde\nu _t(l)}=\{{\bf m}\in \Z^d:2^{-\tilde\nu
_t(l)}{\bf m}\in [-2^{-t}, \, 2^{-t}]^d\backslash [-2^{-t-1}, \,
2^{-t-1}]^d\},
$$
$$
{\cal N}^2_{j,l}=\{(\tilde\nu _t(l), {\bf m}):2^{j+Nd}\le
t<2^{j+1+Nd}, \; {\bf m}\in {\cal M}^2_{\tilde\nu _t(l)}\}.
$$
If ${\bf m}\in {\cal M}^2_{\tilde\nu _t(l)}$, then
\begin{align}
\label{por_m_mnuk2} |{\bf m}|\underset{d}{\asymp} 2^{-t}\cdot
2^{\tilde\nu _t(l)}=2^l,
\end{align}
\begin{align}
\label{card_mnuk2} {\rm card}\, {\cal M}_{\tilde\nu
_t(l)}=\left(2\cdot 2^l+1\right)^d-
\left(2\cdot[2^{l-1}]+1\right)^d\underset{d}{\asymp} 2^{ld}.
\end{align}

Denote
$$
j_0(N)=Nd\cdot \max\left\{\frac{q_2}{2}-1, \, 0\right\}, \; \;
j_1(N)=Nd\cdot \max\left\{\frac{\min \{q_2, \, q_1'\}}{2}-1, \,
0\right\}.
$$

For $\nu\in \Z_+$, $s\in \{0, \, 1\}$ we put
$$
{\cal M}^{3,s}_\nu =\left\{{\bf m}\in \Z^d\backslash
\{0\}:2^{-\nu}{\bf m}\in \left[-2^{-2^{[j_s(N)]+Nd}}, \,
2^{-2^{[j_s(N)]+Nd}}\right]^d\right\},
$$
$$
{\cal N}^{3,s}=\{(\nu, \, {\bf m}): \; {\bf m}\in {\cal
M}^{3,s}_{\nu}\}.
$$

Let us show that for $s\in \{0, \, 1\}$ we have
$$
\left(\cup _{0\le j\le Nd, \; l\in \Z}{\cal N}_{j,l}^1\right)\cup
\left(\cup _{0\le j\le j_s(N), \; l\in \Z_+}{\cal
N}_{j,l}^2\right)\cup {\cal N}^{3,s}\supset \{(\nu, \, {\bf m})\in
{\cal N}:{\bf m}\ne 0\}.
$$
Indeed, if $2^{-\nu}{\bf m}\in \left[-\frac 12, \, \frac
12\right]^d\backslash \left[-2^{-2^{Nd}-1}, \,
2^{-2^{Nd}-1}\right]^d$, then there exist $0\le j\le Nd$, $2^j\le
t\le 2^{j+1}-1$ and $l\in \Z$ such that $2^{-\nu}{\bf m}\in
[-2^{-t}, \, 2^{-t}]^d\backslash [-2^{-t-1}, \, 2^{-t-1}]^d$ and
$\nu=N+t+l-\left[\frac jd\right]-[c_N(j)]$. Hence, $(\nu, \, {\bf
m})\in {\cal N}_{j,l}^1$.

If $2^{-\nu}{\bf m}\in \left[-2^{-2^{Nd}-1}, \,
2^{-2^{Nd}-1}\right]^d\backslash \left[-2^{-2^{Nd+[j_s(N)]}}, \,
2^{-2^{Nd+[j_s(N)]}}\right]^d$, then there exist $0\le j\le
j_s(N)$, $2^{Nd+j}\le t\le 2^{Nd+j+1}-1$ such that $2^{-\nu}{\bf
m}\in [-2^{-t}, \, 2^{-t}]^d\backslash [-2^{-t-1}, \,
2^{-t-1}]^d$. Here $\nu \ge t$, i.e. $\nu =t+l$ for some $l\in
\Z_+$. Therefore, $(\nu, \, {\bf m})\in {\cal N}^2_{j, \, l}$.

Finally, if $2^{-\nu}{\bf m}\in \left[-2^{-2^{[j_s(N)]+Nd}}, \,
2^{-2^{[j_s(N)]+Nd}}\right]^d$, then $(\nu, \, {\bf m})\in {\cal
N}^{3,s}$.

{\bf Step 2.} Let us construct the covering of $\{(\nu ,\, 0):\nu
\in \Z_+\}$. Let
$$
{\cal N}^4=\{(0, \, 0), \, \dots , \, (2^{Nd}-1, \, 0)\},
$$
$$
{\cal N}^5_j=\{(2^{j+Nd}, \, 0), \, (2^{j+Nd}+1, \, 0), \, \dots,
(2^{j+1+Nd}-1, \, 0)\}, \; j\in \Z_+,
$$
${\cal N}^{6,s}=\{(2^{Nd+[j_s(N)]}+t, \, 0), \; t\in \Z_+\}$,
$s\in \{0, \, 1\}$.

Then
$$
{\cal N}^4\cup\left(\cup_{j=0}^{[j_s(N)]-1}{\cal N}^5_j\right)\cup
{\cal N}^{6,s} \supset \{(\nu ,\, 0):\nu \in \Z_+\}.
$$

{\bf Step 3.} Let $\mu ^1_{j,l}$, $\mu^2_{j,l}$, $\mu ^5_j\in
\Z_+$, $s\in \{0, \, 1\}$, $L_j=\left\{l\in \Z_+:\,
N+l-\left[\frac jd\right]-[c_N(j)]\ge 0\right\}$,
$$
n_s:=\sum \limits _{j=0}^{Nd}\sum \limits _{l\in L_j}\mu ^1_{j,l}+
\sum \limits _{j=0}^{Nd}\sum \limits _{l\in \Z\backslash \Z_+, \,
{\cal N}^1_{j,l}\ne \varnothing}\dim \left(P_{{\cal
N}^1_{j,l}}X_1\right)+
$$
$$
+\sum \limits _{0\le j\le j_s(N)}\sum \limits _{l\in
\Z_+}\mu^2_{j,l}+ \dim (P_{{\cal N}^4}X_1)+\sum\limits _{0\le j\le
j_s(N)}\mu ^5_j<\infty,
$$
$\vartheta _{\mu,0}=d_\mu$, $\vartheta _{\mu,1}={\cal A}_\mu$
($\vartheta _{\mu,1}=\lambda _\mu$ if the embedding is compact),
$\mu\in \N$. Then by Proposition \ref{add},
$$
\vartheta _{n_s,s}(P_{{\cal N}}:X_1\rightarrow X_2)\le \sum
\limits _{j=0}^{Nd} \sum \limits _{l\in L_j} \vartheta_{\mu
_{j,l}^1,s}(P_{{\cal N}^1_{j,l}}:X_1\rightarrow X_2)+
$$
$$
+\sum \limits _{0\le j\le j_s(N)}\sum \limits _{l\in
\Z_+}\vartheta_{\mu _{j,l}^2,s} (P_{{\cal
N}^2_{j,l}}:X_1\rightarrow X_2)+ \vartheta_{0,s}(P_{{\cal
N}^{3,s}}:X_1\rightarrow X_2)+$$$$+\sum \limits _{0\le j\le
j_s(N)}\vartheta_{\mu ^5_j,s}(P_{{\cal N}^5_j}:X_1\rightarrow
X_2)+ \vartheta_{0,s}(P_{{\cal N}^{6,s}}:X_1\rightarrow X_2).
$$
Combining (\ref{x1}), (\ref{tilde_x1}), (\ref{x2}),
(\ref{norm_bspq_proj}), (\ref{norm}), (\ref{por_m_mnuk}),
(\ref{card_mnuk}) and Theorems \ref{fin_dim_mix_norm} and
\ref{lin_fin}, we get
$$
\vartheta_{\mu _{j,l}^1,s}(P_{{\cal N}^1_{j,l}}:X_1\rightarrow
X_2) \overset{n,j,l}{\lesssim}\left(2^{N+l-\left[\frac jd\right]
-[c_N(j)]}\right)^{-\delta}2^{-\alpha
j}\rho\left(2^j\right)\vartheta_{\mu
_{j,l}^1,s}(B_{p_1,q_1}^{m^1_{j,l}, k^1_{j,l}}, \,
l_{p_2,q_2}^{m^1_{j,l}, k ^1_{j,l}}),
$$
where
\begin{align}
\label{mjl1_kjl1} m^1_{j,l} =2^{Nd+ld-j-[c_N(j)]d}, \;\;
k^1_{j,l}=2^j.
\end{align}
Similarly, combining (\ref{x1}), (\ref{tilde_x1}), (\ref{x2}),
(\ref{norm_bspq_proj}), (\ref{norm}), (\ref{por_m_mnuk2}),
(\ref{card_mnuk2}) and Theorems \ref{fin_dim_mix_norm} and
\ref{lin_fin}, we obtain
$$
\vartheta_{\mu _{j,l}^2,s}(P_{{\cal N}^2_{j,l}}:X_1\rightarrow
X_2)\overset{n,j,l}{\lesssim} 2^{-\delta
l}\left(2^{j+Nd}\right)^{-\alpha} \rho\left(2^{j+Nd}\right)
\vartheta_{\mu ^2_{j,l},s} (B_{p_1,q_1}^{m^2_{j,l}, k^2_{j,l}}, \,
l_{p_2,q_2}^{m_{j,l}^2, k ^2_{j,l}}),
$$
\begin{align}
\label{mjl1_kjl2} m^2_{j,l}=
2^{ld}, \;\; k^2_{j,l}=2^{j+Nd}.
\end{align}
Denote $r_0=\max\left(\frac{q_2}{2}, \, 1\right)$,
$r_1=\max\left(\frac{\min(q_2, \, q_1')}{2}, \, 1\right)$. By
(\ref{x1}), (\ref{tilde_x1}), (\ref{x2}), (\ref{norm_bspq_proj}),
(\ref{norm}) and inequalities $|{\bf m}|\ge 1$, $\log_2
\frac{2^\nu}{|\bf m|}\ge 2^{[j_s(N)]+Nd}$, $(\nu, \, {\bf m})\in
{\cal N}^{3,s}$, we have
$$
\vartheta_{0,s}(P_{{\cal N}^{3,s}}:X_1\rightarrow
X_2)\overset{n}{\lesssim}(2^{j_s(N)+Nd})^{-\alpha}\rho(2^{j_s(N)+Nd})
\overset{n}{\asymp}n^{-\alpha r_s}\rho\left(n^{r_s}\right).
$$

Further, from (\ref{x1}), (\ref{tilde_x1}), (\ref{x2}),
(\ref{norm_bspq_proj}), (\ref{norm1}) we get
$$
\vartheta_{0,s}(P_{{\cal N}^{6,s}}:X_1\rightarrow
X_2)\overset{n}{\lesssim} \sup _{t\in
\Z_+}(2^{j_s(N)+Nd}+t)^{-\alpha}\rho(2^{j_s(N)+Nd}+t)
\overset{n}{\asymp}
$$
$$
\asymp(2^{j_s(N)+Nd})^{-\alpha}\rho(2^{j_s(N)+Nd})\overset{n}{\asymp}
n^{-\alpha r_s}\rho\left(n^{r_s}\right),
$$
$$
\vartheta_{\mu ^5_j,s}(P_{{\cal N}^5_j}:X_1\rightarrow
X_2)\overset{j,n}{\lesssim}
(2^{j+Nd})^{-\alpha}\rho(2^{j+Nd})\vartheta_{\mu
^5_j,s}(B_{q_1}^{k_j^5}, \, l_{q_2}^{k_j^5}),
$$
where
\begin{align}
\label{mj6kj6} k_j^5=2^{j+Nd}.
\end{align}
Thus,
$$
\vartheta_{n_s,s}(P_{{\cal N}}:X_1\rightarrow
X_2)\overset{n}{\lesssim}\sum \limits _{j=0}^{Nd} \sum\limits
_{l\in L_j} n^{-\frac{\delta}{d}}2^{-l\delta}2^{j(-\alpha+
\frac{\delta}{d})} \rho\left(2^j\right) 2^{c_N(j)\delta}
\vartheta_{\mu _{j,l}^1,s}(B_{p_1,q_1}^{m^1_{j,l}, k^1_{j,l}}, \,
l_{p_2,q_2}^{m^1_{j,l}, k ^1_{j,l}})+
$$
$$
+\sum \limits _{0\le j\le j_s(N)}\sum \limits _{l\in \Z_+}
n^{-\alpha}2^{-l\delta} 2^{-j\alpha}\rho\left(2^jn\right)
\vartheta_{\mu ^2_{j,l},s}(B_{p_1,q_1}^{m^2_{j,l}, k^2_{j,l}}, \,
l_{p_2,q_2}^{m_{j,l}^2, k ^2_{j,l}})+
$$
$$
+n^{-\alpha r_s}\rho\left(n^{r_s}\right)+\sum \limits _{0\le j\le
j_s(N)} n^{-\alpha}2^{-j\alpha}\rho(2^jn)\vartheta_{\mu
^5_j,s}(B_{q_1}^{k_j^5}, \, l_{q_2}^{k_j^5})=:\Sigma _s.
$$

{\bf Step 4.} Let $p_2\le 2$ and $q_2\le 2$. We take
$\mu^1_{j,l}=0$, $\mu^2_{j,l}=0$, $\mu^5_j=0$,
$c_N(j)=\varepsilon|j-j_*^n|$, where $j_*^n=0$, if
$\alpha>\frac{\delta}{d}$, and $j_*^n=Nd$, if
$\alpha<\frac{\delta}{d}$. Then
$$
\max(\Sigma _0, \, \Sigma _1)\overset{n}{\lesssim}\sum\limits
_{j=0}^{Nd} \sum\limits _{l\ge
0}n^{-\frac{\delta}{d}}2^{-l\delta}2^{j\left(-\alpha+\frac{\delta}{d}\right)}
\rho\left(2^j\right)2^{\varepsilon\delta|j-j_*^n|}+\sum \limits
_{l\in \Z_+}n^{-\alpha}2^{-l\delta}
\rho\left(n\right)+n^{-\alpha}\rho(n).
$$
Since $\alpha\ne \frac{\delta}{d}$, then $\max(\Sigma _0, \,
\Sigma _1)\overset{n}{\lesssim}\max \left\{n^{-\frac{\delta}{d}},
\, n^{-\alpha}\rho(n)\right\}$ for sufficiently small
$\varepsilon$.

Similarly one can prove that if $p_1\ge 2$ and $q_1\ge 2$,
$\varepsilon$ is sufficiently small, $\mu^1_{j,l}$, $\mu^2_{j,l}$,
$\mu^5_j$ and $c_N(j)$ are defined as above, then $\Sigma
_1\overset{n}{\lesssim}\max \left\{n^{-\frac{\delta}{d}}, \,
n^{-\alpha}\rho(n)\right\}$.

{\bf Step 5.} Let us obtain an estimate of $\Sigma _0$ for $p_2\ge
2$, $q_2\ge 2$ and under conditions of part 1 of the theorem. For
$i=1, \, 2$ we put
$$
\mu ^i_{j,l}=\left\{ \begin{array}{l} n\cdot 2^{-[\sigma_n
^i(j)]-[\tau_n^i(l, \, j)]}, \text{ if }l\le \overline{l}_i(j),
\\ 0, \text{ if }l>\overline{l}_i(j),\end{array}\right .
$$
where
$$
\overline{l}_1(j)=N\left(\frac{p_2}{2}-1\right)+
\frac{j}{d}\left(1-\frac{p_2}{q_2}\right), \;\;\;
\overline{l}_2(j)=Np_2\left(\frac{1}{2}-\frac{1}{q_2}\right)-
\frac{p_2j}{q_2d},
$$
$$
\sigma ^i_n(j)=\varepsilon |j-j_*^{n,i}|, \;\; \tau ^i_n(l, \,
j)=\varepsilon |l-\hat l^{n,i}(j)|,
$$
$$
\hat l^{n,i}(j)=\left\{ \begin{array}{l} 0, \text{ if
}l_*^{n,i}=0, \\ \tilde l_i(j), \text{ if }l_*^{n,i}=\tilde
l_i(j_*^{n,i}), \\ \overline{l}_i(j), \text{ if
}l_*^{n,i}=\overline{l}_i(j_*^{n,i});\end{array}\right.
$$
functions $\tilde l_i$ and numbers $j_*^{n,1}\in \{0, \, Nd\}$,
$j_*^{n,2}\in \{0, \, j_0(N)\}$, $l_*^{n,i}\in \{0, \, \tilde
l_i(j_*^{n,i}), \, \overline{l}_i(j_*^{n,i})\}$ ($i=1, \, 2$) will
be defined later. Then $\sum \limits _{j,l}\mu
^i_{j,l}\overset{n}{\lesssim}n$. In addition, put
$c_N(j)=\varepsilon|j-j_*^{n,1}|$.

{\bf Substep 5.1.} Estimate the first term of the sum $\Sigma _0$.
Denote for $l\in L_j$
$$d_{n,j,l}=n^{-\frac{\delta}{d}}2^{-l\delta}2^{j(-\alpha+\frac{\delta}{d})}
\cdot2^{c_N(j)\delta}d_{\mu _{j,l}^1}(B_{p_1,q_1}^{m^1_{j,l},
k^1_{j,l}}, \, l_{p_2,q_2}^{m^1_{j,l}, k ^1_{j,l}}).$$

We apply Theorem \ref{fin_dim_mix_norm}.
\begin{itemize}
\item $p_1\le 2$, $q_1\le 2$. Then by (\ref{mjl1_kjl1}) we get
$$
d_{n,j,l}\overset{n,j,l}{\lesssim} \left\{\begin{array}{l}
n^{-\frac{\delta}{d}+\frac{1}{p_2}-\frac 12}\cdot
2^{l\left(-\delta+\frac{d}{p_2}\right)+\frac{\tau ^1_n(l, \,
j)}{2}+j\left(-\alpha+\frac{\delta}{d}-\frac{1}{p_2}
+\frac{1}{q_2}\right)+\frac{\sigma ^1_n(j)}{2}+c_N(j)\left(\delta
-\frac{d}{p_2}\right)},\text{ if }l\le \overline{l}_1(j),
\\ n^{-\frac{\delta}{d}}2^{-l\delta}2^{j(-\alpha+\frac{\delta}{d})+
c_N(j)\delta}, \text{ if }l>\overline{l}_1(j).
\end{array}\right.
$$
Put
$$
F_n^1(j, \, l)=\left\{\begin{array}{l}
n^{-\frac{\delta}{d}+\frac{1}{p_2}-\frac 12}\cdot
2^{l\left(-\delta
+\frac{d}{p_2}\right)+j\left(-\alpha+\frac{\delta}{d}-\frac{1}{p_2}
+\frac{1}{q_2}\right)}, \text{ if }l\le \overline{l}_1(j),
\\ n^{-\frac{\delta}{d}} 2^{-l\delta} 2^{j\left(-\alpha+\frac{\delta}{d}\right)}, \text{ if }
l>\overline{l}_1(j)\end{array}\right.
$$
and take some
$$
(j_*^{n,1}, \, l_*^{n,1})\in {\rm Argmax}\, \left\{F_n^1(0, \, 0),
\, F_n^1(Nd, \, 0), \, F_n^1(0, \,\overline{l}_1(0)), \,
F_n^1(Nd,\, \overline{l}_1(Nd))\right\}=
$$
$$
={\rm Argmax}\, \left\{n^{-\frac{\delta}{d}+\frac{1}{p_2}-\frac
12}, \, n^{-\alpha+\frac{1}{q_2}-\frac 12}, \,
n^{-\frac{p_2\delta}{2d}}, \, n^{-\alpha
-\frac{p_2\delta}{2d}+\frac{p_2\delta}{q_2\cdot d}}\right\}
$$
(see (\ref{argmax})). Here we may assume that either $j_*^{n,1}=0$
for any $n\in \N$ or $j_*^{n,1}=Nd$ for any $n\in \N$.

Apply Lemma \ref{osn_lemma} for $\varphi_1^n(\xi)=0$ and
$\varphi^n_2(\xi)=\overline l_1(\xi)$. If $\varepsilon$ is
sufficiently small, then
\begin{align}
\label{summuj1} \displaystyle
\begin{array}{c}\Sigma_0':=\sum \limits _{j=0}^{Nd}
\sum\limits _{l\in L_j}n^{-\delta
/d}2^{-l\delta}2^{j(-\alpha+\delta/d)}
\rho\left(2^j\right)2^{c_N(j)\delta}d_{\mu
_{j,l}^1}(B_{p_1,q_1}^{m^1_{j,l}, k^1_{j,l}}, \,
l_{p_2,q_2}^{m^1_{j,l}, k ^1_{j,l}})\overset{n}{\lesssim}
\\
\lesssim\max \left\{n^{-\frac{\delta}{d}+\frac{1}{p_2}-\frac 12},
\, n^{-\frac{p_2\delta}{2d}}, \, n^{-\alpha-\frac
12+\frac{1}{q_2}}\rho(n), \, n^{-\frac{p_2\delta}{2d}-\alpha
+\frac{p_2\delta}{q_2\cdot d}}\rho(n), \, n^{-\frac{\alpha
q_2}{2}}\rho(n^{\frac{q_2}{2}})\right\}\overset{n}{\lesssim}\\
\overset{n}{\lesssim}\max
\left\{n^{-\frac{\delta}{d}+\frac{1}{p_2}-\frac 12}, \,
n^{-\frac{p_2\delta}{2d}}, \, n^{-\alpha-\frac
12+\frac{1}{q_2}}\rho(n), \, n^{-\frac{\alpha
q_2}{2}}\rho(n^{\frac{q_2}{2}})\right\}.
\end{array}
\end{align}
Indeed, let $q_2>2$, $p_2>2$ (cases $p_2=2$ and $q_2=2$ can be
treated in a similar way). Notice that
\begin{align}
\label{p2d} \frac{p_2\delta}{2d}+\alpha-
\frac{p_2\delta}{q_2d}=\frac{\alpha q_2}{2}\cdot
\frac{2}{q_2}+\frac{p_2\delta}{2d} \left(1-\frac{2}{q_2}\right)\ge
\min \left\{\frac{\alpha q_2}{2}, \, \frac{p_2\delta}{2d}\right\}
\end{align}
(and the equality holds only if $\frac{\alpha
q_2}{2}=\frac{p_2\delta}{2d}$). By conditions of the theorem,
$\frac{\alpha q_2}{2}\ne \min \left\{\frac{\delta}{d}+\frac 12
-\frac{1}{p_2}, \, \frac{p_2\delta}{2d}, \, \alpha +\frac
12-\frac{1}{q_2}\right\}$.

If $\frac{\alpha q_2}{2}<\min \left\{\frac{\delta}{d}+\frac 12
-\frac{1}{p_2}, \, \frac{p_2\delta}{2d}, \, \alpha +\frac
12-\frac{1}{q_2}\right\}$, then there exists $\theta>0$ such that
$\frac{\alpha q_2}{2}+\theta<\min \left\{\frac{\delta}{d}+\frac 12
-\frac{1}{p_2}, \, \frac{p_2\delta}{2d}, \, \alpha +\frac
12-\frac{1}{q_2}, \, \alpha+\frac{p_2\delta}{2d}
-\frac{p_2\delta}{q_2\cdot d}\right\}$. Therefore, $\Sigma
'_0\overset{n}{\lesssim} n^{-\frac{\alpha
q_2}{2}-\theta}\overset{n}{\lesssim}n^{-\frac{\alpha
q_2}{2}}\rho(n^{\frac{q_2}{2}})$ (here the set $S$ from Lemma
\ref{osn_lemma} is empty).

Let $\frac{\alpha q_2}{2}>\min \left\{\frac{\delta}{d}+\frac 12
-\frac{1}{p_2}, \, \frac{p_2\delta}{2d}, \, \alpha +\frac
12-\frac{1}{q_2}\right\}$. By (\ref{p2d}),
$$\frac{p_2\delta}{2d}+\alpha- \frac{p_2\delta}{q_2d}>\min
\left\{\frac{\delta}{d}+\frac 12 -\frac{1}{p_2}, \,
\frac{p_2\delta}{2d}, \, \alpha +\frac 12-\frac{1}{q_2}\right\}$$
and minimum of the right-hand side is attained at the unique
point. Hence, the set $S$ from Lemma \ref{osn_lemma} is nonempty
and the sum can be estimated from above by $\max
\left\{n^{-\frac{\delta}{d}+ \frac{1}{p_2} -\frac 12}, \,
n^{-\frac{p_2\delta}{2d}}, \, n^{-\alpha-\frac
12+\frac{1}{q_2}}\rho(n)\right\}$.

\item $\lambda({\bf p})\le \lambda({\bf q})$, $\lambda({\bf
p})<1$. Put $\tilde
l_1(j):=\frac{j}{d}\left(1-\frac{2}{q_2}\right)$,
$\overline{m}^1_{j,l}=2^{Nd+ld-j}$. Then the condition
$n>\overline{m}^1_{j,l} (k^1_{j,l})^{2/q_2}$ is equivalent to the
inequality $l< \tilde l_1(j)$. Since $j\le Nd$, $p_2\ge 2$ and
$q_2\ge 2$, we have $\tilde l_1(j)\le \overline{l}_1(j)$.
Therefore,
$$
d_{n,j,l}\overset{n,j,l}{\lesssim} n^{-\frac{\delta}{d}-\frac{1}{p_1}+\frac{1}{p_2}}\cdot
2^{l\left(-\delta+\frac{d}{p_2}-\frac{d}{p_1}+ \frac{\lambda({\bf
q})d}{2}\right)+j\left(-\alpha+\frac{\delta}{d}-
\frac{1}{p_2}+\frac{1}{p_1}+\frac{\lambda({\bf q})}{q_2}-
\frac{\lambda({\bf q})}{2}\right)}\times
$$
$$
\times 2^{\frac{\sigma^1_n(j)\lambda({\bf q})}{2}+\frac{\tau
^1_n(l,\, j)\lambda({\bf q})}{2}+c_N(j)d
\left(\frac{\delta}{d}-\frac{1}{p_2} +\frac{1}{p_1}
-\frac{\lambda({\bf q})}{2}\right)},\text{ if }0\le l< \tilde
l_1(j),
$$
$$
d_{n,j,l}\overset{n,j,l}{\lesssim}n^{-\frac{\delta}{d}-\frac{\lambda({\bf
p})}{2}+\frac{\lambda({\bf p})}{p_2}}\cdot 2^{l\left(-\delta
+\frac{\lambda({\bf p})d}{p_2}\right)+j\left(-\alpha+\frac{\delta}{d}
-\frac{\lambda({\bf p})}{p_2}+\frac{\lambda({\bf
p})}{q_2}\right)}\times
$$
$$
\times 2^{ \frac{\sigma^1_n(j)\lambda({\bf p})}{2}+ \frac{\tau
^1_n(l,\, j)\lambda({\bf p})}{2}+
c_N(j)d\left(\frac{\delta}{d}-\frac{\lambda({\bf p})}{p_2}\right)
}, \text{ if }\tilde l_1(j)\le l<\overline{l}_1(j),
$$
$$
d_{n,j,l}\overset{n,j,l}{\lesssim}n^{-\frac{\delta}{d}}2^{-l\delta}
2^{j(-\alpha+\frac{\delta}{d})}\cdot 2^{c_N(j)\delta},\text{ if }
l\ge\overline{l}_1(j).
$$
Let
$$
F_n^1(j, \, l)=n^{-\frac{\delta}{d}-\frac{1}{p_1}+\frac{1}{p_2}}\cdot
2^{l\left(-\delta+\frac{d}{p_2}-\frac{d}{p_1}+ \frac{\lambda({\bf
q})d}{2}\right)+j\left(-\alpha+\frac{\delta}{d}-
\frac{1}{p_2}+\frac{1}{p_1}+\frac{\lambda({\bf q})}{q_2}-
\frac{\lambda({\bf q})}{2}\right)},
$$
if $0\le l< \tilde l_1(j)$,
$$
F_n^1(j, \, l)=n^{-\frac{\delta}{d}-\frac{\lambda({\bf
p})}{2}+\frac{\lambda({\bf p})}{p_2}}\cdot 2^{l\left(-\delta
+\frac{\lambda({\bf p})d}{p_2}\right)+j\left(-\alpha+\frac{\delta}{d}
-\frac{\lambda({\bf p})}{p_2}+\frac{\lambda({\bf
p})}{q_2}\right)},
$$
if $\tilde l_1(j)\le l<\overline{l}_1(j)$,
$$
F_n^1(j, \, l)=n^{-\frac{\delta}{d}}2^{-l\delta}
2^{j(-\alpha+\frac{\delta}{d})}, \text{ if }l\ge\overline{l}_1(j).
$$
Note that $\tilde l_1(0)=0.$ Choose
$$
(j_*^{n,1}, \, l_*^{n,1})\in {\rm Argmax}\, \left\{F_n^1(0, \, 0),
\, F_n^1(Nd, \, 0), \, F_n^1(0, \,\overline{l}_1(0)),
\right.$$$$\left. F_n^1(Nd,\, \overline{l}_1(Nd)), \, F_n^1(Nd,\,
\tilde{l}_1(Nd))\right\}.
$$

By Lemma \ref{osn_lemma}, for enough small $\varepsilon$ we obtain
\begin{align}
\label{summuj2} \displaystyle
\begin{array}{c}\sum \limits _{j=0}^{Nd}
\sum\limits _{l\in L_j}n^{-\delta
/d}2^{-l\delta}2^{j(-\alpha+\delta/d)}
\rho\left(2^j\right)2^{c_N(j)\delta}d_{\mu
_{j,l}^1}(B_{p_1,q_1}^{m^1_{j,l}, k^1_{j,l}}, \,
l_{p_2,q_2}^{m^1_{j,l}, k ^1_{j,l}})\overset{n}{\lesssim}
\\
\lesssim\max\left\{n^{-\frac{\delta}{d}-
\frac{1}{p_1}+\frac{1}{p_2}}, \, n^{-\alpha+\frac{\lambda({\bf
q})}{q_2}- \frac{\lambda({\bf
q})}{2}}\rho(n), \, n^{-\frac{p_2\delta}{2d}}, \right .\\
\left . n^{-\frac{\delta p_2}{2d}-\alpha+\frac{\delta
p_2}{q_2\cdot d}}\rho(n), \, n^{-\alpha
+\left(-\frac{\delta}{d}-\frac{1}{p_1}+\frac{1}{p_2}\right)
\left(1-\frac{2}{q_2}\right)}\rho(n), \, n^{-\frac{\alpha
q_2}{2}}\rho(n^{\frac{q_2}{2}})\right\}\overset{n}{\lesssim}\\
\lesssim\max\left\{n^{-\frac{\delta}{d}-
\frac{1}{p_1}+\frac{1}{p_2}}, \, n^{-\alpha+\frac{\lambda({\bf
q})}{q_2}- \frac{\lambda({\bf q})}{2}}\rho(n), \,
n^{-\frac{p_2\delta}{2d}}, \, n^{-\frac{\alpha
q_2}{2}}\rho(n^{\frac{q_2}{2}})\right\}.
\end{array}
\end{align}
It can be proved in a similar way as in the previous case. Here
$\varphi_1^n(\xi)=0$, $\varphi_2^n(\xi)=\tilde l_1(\xi)$,
$\varphi_3^n(\xi)=\overline{l}_1(\xi)$.

\item $\lambda({\bf p})\ge \lambda({\bf q})$, $\lambda({\bf
q})<1$. Let $\tilde l_1(j)=\left(N-\frac{j}{d}\right)
\left(\frac{p_2}{2}-1\right)$, $\overline{m}^1_{j,l}=2^{Nd+ld-j}$.
Then the condition $n>(\overline{m}^1_{j,l})^{\frac{2}{p_2}}
k^1_{j,l}$ is equivalent to the inequality $l< \tilde l_1(j)$.
Hence, by Theorem \ref{fin_dim_mix_norm},
$$
d_{n,j,l}\lesssim n^{-\frac{\delta}{d}-\frac{\lambda({\bf
p})}{2}+\frac{\lambda({\bf p})}{p_2}}\cdot
2^{j\left(-\alpha+\frac{\delta}{d}+\frac{\lambda({\bf
p})}{2}-\frac{\lambda({\bf p})}{p_2}+
\frac{1}{q_2}-\frac{1}{q_1}\right)+l\left(-\delta
+\frac{\lambda({\bf p})d}{p_2}\right)}\times$$$$\times
2^{\frac{\sigma^1_n(j)\lambda({\bf p})}{2}+\frac{\tau^1_n(l, \,
j)\lambda({\bf p})}{2}+c_N(j)\left(\delta-\frac{\lambda({\bf
p})d}{p_2}\right)},
$$
if $0\le l<\tilde{l}_1(j)$,
$$
d_{n,j,l}\overset{n,j,l}{\lesssim}n^{-\frac{\delta}{d}-
\frac{\lambda({\bf q})}{2}+\frac{\lambda({\bf q})}{p_2}}\cdot
2^{j\left(-\alpha+\frac{\delta}{d}-\frac{\lambda({\bf q})}{p_2}+
\frac{\lambda({\bf q})}{q_2}\right) +l\left(-\delta
+\frac{\lambda({\bf q})d}{p_2}\right)}\times$$$$\times
2^{\frac{\sigma^1_n(j)\lambda({\bf q})}{2}+\frac{\tau ^1_n(l, \,
j)\lambda({\bf q})}{2}+c_N(j)\left(\delta-\frac{\lambda({\bf
q})d}{p_2}\right)},
$$
if $\tilde{l}_1(j)\le l<\overline{l}_1(j)$,
$$
d_{n,j,l}\overset{n,j,l}{\lesssim}n^{-\frac{\delta}{d}}2^{-l\delta}
2^{j(-\alpha+\frac{\delta}{d})+c_N(j)\delta},
$$
if $l\ge \overline{l}_1(j)$.

Put
$$
F^1_n(j, \, l)= n^{-\frac{\delta}{d}- \frac{\lambda({\bf p})}{2}
+\frac{\lambda({\bf p})}{p_2}}\cdot
2^{j\left(-\alpha+\frac{\delta}{d}+\frac{\lambda({\bf
p})}{2}-\frac{\lambda({\bf p})}{p_2}+
\frac{1}{q_2}-\frac{1}{q_1}\right)+l\left(-\delta
+\frac{\lambda({\bf p})d}{p_2}\right)},
$$
if $0\le l<\tilde l_1(j)$,
$$
F^1_n(j, \, l) =n^{-\frac{\delta}{d}-\frac{\lambda({\bf
q})}{2}+\frac{\lambda({\bf q})}{p_2}}\cdot
2^{j\left(-\alpha+\frac{\delta}{d}-\frac{\lambda({\bf
q})}{p_2}+\frac{\lambda({\bf q})}{q_2}\right) +l\left(-\delta
+\frac{\lambda({\bf q})d}{p_2}\right)}, \text{ if }\tilde
l_1(j)\le l<\overline{l}_1(j),
$$
$$
F^1_n(j, \, l)=n^{-\frac{\delta}{d}}2^{-l\delta}
2^{j(-\alpha+\frac{\delta}{d})}, \text{ if }
l\ge\overline{l}_1(j).
$$
Note that $\tilde{l}_1(0)=\overline{l}_1(0)$ and
$\tilde{l}_1(Nd)=0$. Choose
$$
(j_*^{n,1}, \, l_*^{n,1})\in {\rm Argmax}\, \left\{F_n^1(0, \, 0),
\, F_n^1(Nd, \, 0), \, F_n^1(0, \,\overline{l}_1(0)), \,
F_n^1(Nd,\, \overline{l}_1(Nd))\right\}.
$$

By Lemma \ref{osn_lemma}, for enough small $\varepsilon$ we have
\begin{align}
\label{summuj3} \displaystyle
\begin{array}{c}\sum \limits _{j=0}^{Nd}
\sum\limits _{l\in L_j}n^{-\delta
/d}2^{-l\delta}2^{j(-\alpha+\delta/d)}
\rho\left(2^j\right)2^{c_N(j)\delta}d_{\mu
_{j,l}^1}(B_{p_1,q_1}^{m^1_{j,l}, k^1_{j,l}}, \,
l_{p_2,q_2}^{m^1_{j,l}, k ^1_{j,l}})\overset{n}{\lesssim}
\\
\lesssim\max\left\{ n^{-\frac{\delta}{d}-\frac{\lambda({\bf
p})}{2}+ \frac{\lambda({\bf p})}{p_2}}, \,
n^{-\alpha+\frac{1}{q_2}-\frac{1}{q_1}}\rho(n), \,
n^{-\frac{p_2\delta}{2d}}, \, n^{-\alpha+\frac{\delta
p_2}{q_2\cdot d}-\frac{\delta p_2}{2d}}\rho(n), \,
n^{-\frac{\alpha
q_2}{2}}\rho(n^{\frac{q_2}{2}})\right\}\overset{n}{\lesssim}\\
\lesssim\max\left\{ n^{-\frac{\delta}{d}-\frac{\lambda({\bf
p})}{2}+ \frac{\lambda({\bf p})}{p_2}}, \,
n^{-\alpha+\frac{1}{q_2}-\frac{1}{q_1}}\rho(n), \,
n^{-\frac{p_2\delta}{2d}}, \, n^{-\frac{\alpha
q_2}{2}}\rho(n^{\frac{q_2}{2}})\right\}.
\end{array}
\end{align}
\end{itemize}
Here $\varphi_1^n(\xi)=0$, $\varphi_2^n(\xi)=\tilde l_1(\xi)$,
$\varphi_3^n(\xi)=\overline{l}_1(\xi)$.

{\bf Substep 5.2.} To estimate the second term of the sum $\Sigma
_0$, we apply Theorem \ref{fin_dim_mix_norm} again. Note that
\begin{align}
\label{l2j00} \overline{l}_2(j_0(N))=0.
\end{align}
\begin{itemize}
\item $p_1\le 2$, $q_1\le 2$. Then, by (\ref{mjl1_kjl2}),
$$
n^{-\alpha}2^{-l\delta} 2^{-j\alpha} d_{\mu
^2_{j,l}}(B_{p_1,q_1}^{m^2_{j,l}, k^2_{j,l}}, \,
l_{p_2,q_2}^{m_{j,l}^2, k^2_{j,l}})\overset{n,j,l}{\lesssim}
$$
$$
\lesssim \left\{ \begin{array}{l} n^{-\alpha+ \frac{1}{q_2}-\frac
12}\cdot 2^{l\left(-\delta+\frac{d}{p_2}\right)}
2^{j\left(-\alpha+\frac{1}{q_2}\right)}\cdot
2^{\frac{\sigma^2_n(j)}{2}+\frac{\tau ^2_n(l, \, j)}{2}},
\text{ if }l\le \overline{l}_2(j), \\
n^{-\alpha}2^{-l\delta} 2^{-j\alpha}, \text{ if
}l>\overline{l}_2(j).\end{array}\right .
$$
Put
$$
F^2_n(j, \, l)=\left\{ \begin{array}{l} n^{-\alpha+
\frac{1}{q_2}-\frac 12}2^{l\left(-\delta+\frac{d}{p_2}\right)}
2^{j\left(-\alpha+\frac{1}{q_2}\right)}, \text{ if }l\le
\overline{l}_2(j), \\ n^{-\alpha}2^{-l\delta} 2^{-j\alpha}, \text{
if }l>\overline{l}_2(j).\end{array}\right.
$$

Choose
$$
(j_*^{n,2}, \, l_*^{n,2})\in {\rm Argmax}\, \left\{F_n^2(0, \, 0),
\, F_n^2(0, \,\overline{l}_2(0)), \, F_n^2(j_0(N),\, 0)\right\}.
$$

Apply Lemma \ref{osn_lemma} for $\varphi^n_1(\xi)=0$,
$\varphi^n_2(\xi)=\overline{l}_2(\xi)$, taking into account
(\ref{l2j00}). If $\varepsilon$ is sufficiently small, then
\begin{align}
\label{2in1} \displaystyle
\begin{array}{c}
\Sigma_0'':=\sum \limits _{0\le j\le j_0(N)}\sum \limits _{l\in
\Z_+} n^{-\alpha}2^{-l\delta} 2^{-j\alpha}\rho\left(2^jn\right)
d_{\mu ^2_{j,l}}(B_{p_1,q_1}^{m^2_{j,l}, k^2_{j,l}}, \,
l_{p_2,q_2}^{m_{j,l}^2, k ^2_{j,l}})\overset{n}{\lesssim}
\\
\lesssim\max \left\{n^{-\frac{\delta}{d}+\frac{1}{p_2}-\frac 12},
\, n^{-\frac{p_2\delta}{2d}}, \, n^{-\alpha-\frac
12+\frac{1}{q_2}}\rho(n), \, n^{-\frac{p_2\delta}{2d}-\alpha
+\frac{\delta p_2}{q_2d}}\rho(n), \, n^{-\frac{\alpha
q_2}{2}}\rho(n^{\frac{q_2}{2}})\right\}\overset{n}{\lesssim}\\
\overset{n}{\lesssim}\max
\left\{n^{-\frac{\delta}{d}+\frac{1}{p_2}-\frac 12}, \,
n^{-\frac{p_2\delta}{2d}}, \, n^{-\alpha-\frac
12+\frac{1}{q_2}}\rho(n), \, n^{-\frac{\alpha
q_2}{2}}\rho(n^{\frac{q_2}{2}})\right\}.
\end{array}
\end{align}
Indeed, consider the case $p_2>2$, $q_2>2$ (in other cases of the
assertion can be proved in a similar way). By conditions of the
theorem, we have $$\min \left\{\frac{\delta}{d}+\frac
12-\frac{1}{p_2}, \, \frac{p_2\delta}{2d}\right\}\ne\min
\left\{\alpha+\frac 12-\frac{1}{q_2}, \, \frac{\alpha
q_2}{2}\right\}.$$ If $\min \left\{\frac{\delta}{d}+\frac
12-\frac{1}{p_2}, \, \frac{p_2\delta}{2d}\right\}<\min
\left\{\alpha+\frac 12-\frac{1}{q_2}, \, \frac{\alpha
q_2}{2}\right\}$, then by (\ref{p2d})
$$
\frac{p_2\delta}{2d}+\alpha -\frac{\delta p_2}{q_2d}>\min
\left\{\frac{\delta}{d}+\frac 12-\frac{1}{p_2}, \,
\frac{p_2\delta}{2d}\right\}
$$
and $$\Sigma_0''\overset{n}{\lesssim}n^{-\min
\left\{\frac{\delta}{d}+\frac 12-\frac{1}{p_2}, \,
\frac{p_2\delta}{2d}\right\}}.$$ Here the set $S$ from Lemma
\ref{osn_lemma} is empty.

If $\min \left\{\frac{\delta}{d}+\frac 12-\frac{1}{p_2}, \,
\frac{p_2\delta}{2d}\right\}>\min \left\{\alpha+\frac
12-\frac{1}{q_2}, \, \frac{\alpha q_2}{2}\right\}$, then by
(\ref{p2d})
$$
\frac{p_2\delta}{2d}+\alpha -\frac{\delta p_2}{q_2d}> \min
\left\{\alpha+\frac 12-\frac{1}{q_2}, \, \frac{\alpha
q_2}{2}\right\},
$$
the right-hand side attains its minimum at the unique point and
$$
\Sigma _0'' \overset{n}{\lesssim}\max \left\{ n^{-\alpha-\frac
12+\frac{1}{q_2}}\rho(n), \, n^{-\frac{\alpha
q_2}{2}}\rho(n^{\frac{q_2}{2}})\right\}.
$$
Here the set $S$ from Lemma \ref{osn_lemma} is nonempty.

\item $\lambda({\bf p})\le \lambda({\bf q})$, $\lambda({\bf
p})<1$. Put $\tilde
l_2(j)=N\left(1-\frac{2}{q_2}\right)-\frac{2j}{q_2d}$. Then the
condition $n>m^2_{j,l}(k^2_{j,l})^{2/q_2}$ is equivalent to the
inequality $l<\tilde l_2(j)$ and
$$
n^{-\alpha}2^{-l\delta} 2^{-j\alpha} d_{\mu ^2_{j,l}}
(B_{p_1,q_1}^{m^2_{j,l}, k^2_{j,l}}, \, l_{p_2,q_2}^{m_{j,l}^2,
k^2_{j,l}})\overset{n,j,l}{\lesssim}
$$
$$
\lesssim \left\{ \begin{array}{l} n^{-\alpha-\frac{\lambda({\bf
q})}{2} + \frac{\lambda({\bf q})}{q_2}}\cdot
2^{j\left(-\alpha+\frac{\lambda({\bf
q})}{q_2}\right)+l\left(-\delta+\frac{d}{p_2}-\frac{d}{p_1}+\frac{\lambda({\bf
q})d}{2}\right)}2^{\frac{\lambda({\bf q})\sigma
^2_n(j)}{2}+\frac{\lambda({\bf q})\tau ^2_n(l,\, j)}{2}}, \text{
if
}l< \tilde{l}_2(j), \\
n^{-\alpha -\frac{\lambda({\bf p})}{2}+\frac{\lambda({\bf
p})}{q_2}}\cdot 2^{j\left(-\alpha+\frac{\lambda({\bf
p})}{q_2}\right)+l\left(-\delta+\frac{\lambda({\bf
p})d}{p_2}\right)}2^{\frac{\lambda({\bf p})\sigma
^2_n(j)}{2}+\frac{\lambda({\bf p})\tau ^2_n(l,\, j)}{2}}, \text{
if
}\tilde l_2(j)\le l<\overline{l}_2(j), \\
n^{-\alpha}2^{-l\delta} 2^{-j\alpha}, \text{ if }l\ge
\overline{l}_2(j).
\end{array}\right .
$$
Let
$$
F^2_n(j, \, l)=\left\{
\begin{array}{l} n^{-\alpha-\frac{\lambda({\bf q})}{2}
+\frac{\lambda({\bf q})}{q_2}}\cdot
2^{j\left(-\alpha+\frac{\lambda({\bf q})}{q_2}\right)
+l\left(-\delta+\frac{d}{p_2}-\frac{d}{p_1}+ \frac{\lambda({\bf
q})d}{2}\right)}, \text{ if }0\le l< \tilde l_2(j), \\
n^{-\alpha -\frac{\lambda({\bf p})}{2}+\frac{\lambda({\bf
p})}{q_2}}\cdot 2^{j\left(-\alpha+\frac{\lambda({\bf
p})}{q_2}\right)+l\left(-\delta+\frac{\lambda({\bf
p})d}{p_2}\right)}, \text{ if }\tilde l_2(j)< l\le
\overline{l}_2(j), \\
n^{-\alpha}2^{-l\delta} 2^{-j\alpha},\text{ if } l\ge
\overline{l}_2(j).
\end{array}\right .
$$

Choose
$$
(j_*^{n,2}, \, l_*^{n,2})\in {\rm Argmax}\, \left\{F_n^2(0, \, 0),
\, F_n^2(0, \,\overline{l}_2(0)), \, F_n^2(0, \,\tilde{l}_2(0)),
\, F_n^2(j_0(N),\, 0)\right\}.
$$

By Lemma \ref{osn_lemma} (taking into account (\ref{l2j00}) and
the equality $\tilde l_2(j_0(N))=0$), for enough small
$\varepsilon$ we obtain
\begin{align}
\label{2in2} \displaystyle
\begin{array}{c}
\sum \limits _{0\le j\le j_0(N)}\sum \limits _{l\in
\Z_+}n^{-\alpha}2^{-l\delta} 2^{-j\alpha}\rho\left(2^jn\right)
d_{\mu ^2_{j,l}}(B_{p_1,q_1}^{m^2_{j,l}, k^2_{j,l}}, \,
l_{p_2,q_2}^{m_{j,l}^2, k ^2_{j,l}})\overset{n}{\lesssim}
\\
\lesssim\max\left\{n^{-\frac{\delta}{d}-
\frac{1}{p_1}+\frac{1}{p_2}}, \, n^{-\alpha+\frac{\lambda({\bf
q})}{q_2}- \frac{\lambda({\bf
q})}{2}}\rho(n), \, n^{-\frac{p_2\delta}{2d}}, \right .\\
\left . n^{-\frac{\delta p_2}{2d}-\alpha+\frac{\delta
p_2}{q_2\cdot d}}\rho(n), \, n^{-\alpha
+\left(-\frac{\delta}{d}-\frac{1}{p_1}+\frac{1}{p_2}\right)
\left(1-\frac{2}{q_2}\right)}\rho(n), \, n^{-\frac{\alpha
q_2}{2}}\rho(n^{\frac{q_2}{2}})\right\}\overset{n}{\lesssim}\\
\lesssim\max\left\{n^{-\frac{\delta}{d}-
\frac{1}{p_1}+\frac{1}{p_2}}, \, n^{-\alpha+\frac{\lambda({\bf
q})}{q_2}- \frac{\lambda({\bf q})}{2}}\rho(n), \,
n^{-\frac{p_2\delta}{2d}}, \, n^{-\frac{\alpha
q_2}{2}}\rho(n^{\frac{q_2}{2}})\right\}.
\end{array}
\end{align}

\item $\lambda({\bf p})\ge \lambda({\bf q})$, $\lambda({\bf
q})<1$. Then $n\le (m^2_{j,l})^{\frac{2}{p_2}}k^2_{j,l}$. Hence,
$$
n^{-\alpha}2^{-l\delta} 2^{-j\alpha} d_{\mu
^2_{j,l}}(B_{p_1,q_1}^{m^2_{j,l}, k^2_{j,l}}, \,
l_{p_2,q_2}^{m_{j,l}^2, k^2_{j,l}})\overset{n,j,l}{\lesssim}
$$
$$
\lesssim \left\{
\begin{array}{l} n^{-\alpha-\frac{1}{q_1}
+\frac{1}{q_2}}2^{l\left(-\delta+ \frac{\lambda({\bf
q})d}{p_2}\right)} 2^{j\left(-\alpha+\frac{\lambda({\bf
q})}{q_2}\right)}\cdot 2^{\frac{\lambda({\bf
q})\sigma^2_n(j)}{2}+\frac{\lambda({\bf q})\tau^2_n(l, \, j)}{2}},
\text{ if }l<\overline{l}_2(j), \\
n^{-\alpha}2^{-l\delta} 2^{-j\alpha},\text{ if }l\ge
\overline{l}_2(j).\end{array}\right .
$$
Put
$$
F^2_n(j, \, l)=\left\{
\begin{array}{l} n^{-\alpha-\frac{1}{q_1}
+\frac{1}{q_2}}2^{l\left(-\delta+ \frac{\lambda({\bf
q})d}{p_2}\right)} 2^{j\left(-\alpha+\frac{\lambda({\bf
q})}{q_2}\right)},
\text{ if }l<\overline{l}_2(j), \\
n^{-\alpha}2^{-l\delta} 2^{-j\alpha},\text{ if }l\ge
\overline{l}_2(j)\end{array}\right .
$$
and choose
$$
(j_*^{n,2}, \, l_*^{n,2})\in {\rm Argmax}\, \left\{F_n^2(0, \, 0),
\, F_n^2(0, \,\overline{l}_2(0)), \, F_n^2(j_0(N),\, 0)\right\}.
$$

By Lemma \ref{osn_lemma}, for enough small $\varepsilon$ we obtain
\begin{align}
\label{2in3} \displaystyle
\begin{array}{c}
\sum \limits _{0\le j\le j_0(N)}\sum \limits _{l\in
\Z_+}n^{-\alpha}2^{-l\delta} 2^{-j\alpha}\rho\left(2^jn\right)
d_{\mu ^2_{j,l}}(B_{p_1,q_1}^{m^2_{j,l}, k^2_{j,l}}, \,
l_{p_2,q_2}^{m_{j,l}^2, k ^2_{j,l}})\overset{n}{\lesssim}
\\
\lesssim\max\left\{ n^{-\frac{\delta}{d}-\frac{\lambda({\bf
p})}{2}+ \frac{\lambda({\bf p})}{p_2}}, \,
n^{-\alpha+\frac{1}{q_2}-\frac{1}{q_1}}\rho(n), \,
n^{-\frac{p_2\delta}{2d}}, \, n^{-\alpha+\frac{\delta p_2}{q_2
d}-\frac{\delta p_2}{2d}}\rho(n), \, n^{-\frac{\alpha
q_2}{2}}\rho(n^{\frac{q_2}{2}})\right\}\overset{n}{\lesssim}\\
\lesssim\max\left\{ n^{-\frac{\delta}{d}-\frac{\lambda({\bf
p})}{2}+ \frac{\lambda({\bf p})}{p_2}}, \,
n^{-\alpha+\frac{1}{q_2}-\frac{1}{q_1}}\rho(n), \,
n^{-\frac{p_2\delta}{2d}}, \, n^{-\frac{\alpha
q_2}{2}}\rho(n^{\frac{q_2}{2}})\right\}.
\end{array}
\end{align}
\end{itemize}

{\bf Substep 5.3.} Now we estimate the last term of the sum
$\Sigma _0$.

Put $j_*^n=0$, if $\alpha \ge \frac{\lambda({\bf q})}{q_2}$,
$j_*^n=j_0(N)$, if $\alpha <\frac{\lambda({\bf q})}{q_2}$,
$\sigma^5_n(j)=\varepsilon|j-j_*^n|$, $\mu ^5_j=n2^{-[\sigma
^5_n(j)]}$. Then $\sum \limits _{j=0}^{j_0}\mu
^5_j\overset{n}{\lesssim}n$. Theorem \ref{teor_glus} together with
(\ref{mj6kj6}) yield
$$
n^{-\alpha}\sum \limits _{0\le j\le
j_0(N)}2^{-j\alpha}\rho(2^jn)d_{\mu ^5_j}(B_{q_1}^{k_j^5}, \,
l_{q_2}^{k_j^5})\overset{n}{\lesssim} n^{-\alpha}\sum \limits
_{0\le j\le j_0(N)}2^{-j\alpha}\rho(2^jn)n^{-\frac{\lambda({\bf
q})}{2}}\cdot 2^{\frac{\lambda({\bf q})\sigma
^5_n(j)}{2}}2^{(j+Nd)\frac{\lambda({\bf q})}{q_2}}=
$$
$$
=n^{-\alpha +\frac{\lambda({\bf q})}{q_2}-\frac{\lambda({\bf
q})}{2}}\sum \limits _{0\le j\le
j_0(N)}2^{j\left(-\alpha+\frac{\lambda({\bf
q})}{q_2}\right)}\rho(2^jn)\cdot 2^{\frac{\lambda({\bf q})\sigma
^5_n(j)}{2}}=:\Sigma _*.
$$
For sufficiently small $\varepsilon >0$ we get
\begin{align}
\label{sz} \Sigma_*\overset{n}{\lesssim} \left \{
\begin{array}{l}
n^{-\alpha +\frac{\lambda({\bf q})}{q_2}-\frac{\lambda({\bf
q})}{2}}, \text{ if } \alpha>\frac{\lambda({\bf q})}{q_2},
\\ n^{-\alpha q_2/2}\rho(n^{q_2/2}), \text{ if }\alpha <
\frac{\lambda({\bf q})}{q_2}.
\end{array}
\right.
\end{align}
If $\alpha =\frac{\lambda({\bf q})}{q_2}$, then by conditions of
the theorem  we have $\frac{\alpha
q_2}{2}=\alpha+\frac{\lambda({\bf q})}{2}-\frac{\lambda({\bf
q})}{q_2}>\min\left\{\frac{\delta}{d} +\frac{\lambda({\bf
p})}{2}-\frac{\lambda({\bf p})}{p_2}, \,
\frac{p_2\delta}{2d}\right\}$. Therefore,
\begin{align}
\label{sz1} \Sigma_*\overset{n}{\lesssim}
n^{-\min\left\{\frac{\delta}{d} +\frac{\lambda({\bf
p})}{2}-\frac{\lambda({\bf p})}{p_2}, \,
\frac{p_2\delta}{2d}\right\}}.
\end{align}

Combining (\ref{summuj1}), (\ref{summuj2}), (\ref{summuj3}),
(\ref{2in1}), (\ref{2in2}), (\ref{2in3}), (\ref{sz}) and
(\ref{sz1}), we obtain the desired upper estimate.

{\bf Step 6.} Let us estimate $\Sigma _1$ for $1<p_1\le 2\le
p_2<\infty$ and $1<q_1\le 2\le q_2<\infty$. By Theorem
\ref{lin_fin},
$$
\lambda _n(B^{m,k}_{p_1,q_1}, \,
l^{m,k}_{p_2,q_2})\overset{m,k,n}{\lesssim}\left\{
\begin{array}{l}
\Phi _0(m,\, k, \, n, \, p_1, \, p_2, \, q_1, \, q_2), \;\;
\frac{1}{p_1}+\frac{1}{p_2}\ge 1, \;
\frac{1}{q_1}+\frac{1}{q_2}\ge 1,\\
\Phi _0(m,\, k, \, n, \, p_2', \, p_1', \, q_1, \, q_2), \;\;
\frac{1}{p_1}+\frac{1}{p_2}\le 1, \;
\frac{1}{q_1}+\frac{1}{q_2}\ge 1,\\
\Phi _0(m,\, k, \, n, \, p_1, \, p_2, \, q_2', \, q_1'),\;\;
\frac{1}{p_1}+\frac{1}{p_2}\ge 1, \;
\frac{1}{q_1}+\frac{1}{q_2}\le 1,\\
\Phi _0(m,\, k, \, n, \, p_2', \, p_1', \, q_2', \, q_1'),\;\;
\frac{1}{p_1}+\frac{1}{p_2}\le 1, \;
\frac{1}{q_1}+\frac{1}{q_2}\le 1.
\end{array}\right .
$$
It remains to use (\ref{gluskin_lin}) and the upper estimate for
$\Sigma _0$ which is already proved.

{\it Proof of the lower estimate.} Let $p_2\ge 2$, $q_2\ge 2$.
Take $\varepsilon=0$. By (\ref{norm_bspq_proj_m1}), (\ref{proj}),
(\ref{norm}), (\ref{por_m_mnuk}), (\ref{card_mnuk}),
(\ref{por_m_mnuk2}) and (\ref{card_mnuk2}), there exist
$\nu_n\overset{n}{\asymp} n$ and $\mu_n\overset{n}{\asymp} n$ such
that $\mu_n\le \frac{\nu_n}{2}$ and
$$
d_{\mu_n}(P_{{\cal N}}:X_1\rightarrow
X_2)\overset{n}{\gtrsim}\max\left\{d_{\mu_n}(P_{{\cal
N}^1_{0,0}}:X_1 \rightarrow X_2),\right.
$$
$$
\left. d_{\mu_n}(P_{{\cal
N}^1_{0,\overline{l}_1(0)}}:X_1\rightarrow X_2), \,
d_{\mu_n}(P_{{\cal N}^1_{Nd,0}}:X_1\rightarrow X_2), \,
d_{\mu_n}(P_{{\cal N}^2_{j_0(N),0}}:X_1\rightarrow
X_2)\right\}\overset{n}{\gtrsim}$$
$$
\gtrsim\max\left\{n^{-\frac{\delta}{d}}d_{\mu_n}(B^{\nu_n,1}_{p_1,q_1},
\, l^{\nu_n,1}_{p_2,q_2}), \, n^{-\frac{\delta}{d}}\cdot
2^{N\left(\frac{p_2}{2}-1\right)\delta}, \right .
$$
$$
\left. n^{-\alpha}\rho(n)d_{\mu_n}(B^{1,\nu_n}_{p_1,q_1}, \,
l^{1,\nu_n}_{p_2,q_2}), \, n^{-\alpha}\cdot
2^{-j_0(N)\alpha}\rho(2^{j_0(N)}n)\right\}=
$$
$$
=\max\left\{n^{-\frac{\delta}{d}}d_{\mu_n}(B^{\nu_n}_{p_1}, \,
l^{\nu_n}_{p_2}), \, n^{-\frac{p_2\delta}{2d}}, \,
n^{-\alpha}\rho(n)d_{\mu_n}(B^{\nu_n}_{q_1}, \, l^{\nu_n}_{q_2}),
\, n^{-\alpha q_2/2}\rho(n^{q_2/2})\right\}.
$$
Similarly, if $1<p_1\le 2\le p_2<\infty$, $1<q_1\le 2\le
q_2<\infty$, then
$$
{\cal A}_{\mu_n}(P_{{\cal N}}:X_1\rightarrow
X_2)\overset{n}{\gtrsim}
\max\left\{n^{-\frac{\delta}{d}}\lambda_{\mu_n}(B^{\nu_n}_{p_1},\,
l^{\nu_n}_{p_2}), \, n^{-\frac{\min(p_2, \, p_1')\delta}{2d}},
\right.$$$$\left.
n^{-\alpha}\rho(n)\lambda_{\mu_n}(B^{\nu_n}_{q_1}, \,
l^{\nu_n}_{q_2}), \, n^{-\frac{\alpha \min (q_2, \,
q_1')}{2}}\rho(n^{\min(q_2, \, q_1')/2})\right\}
$$
(we use (\ref{proj1}) instead of (\ref{proj})). It remains to
apply Theorem \ref{teor_glus}.

If $p_2\le 2$ and $q_2\le 2$, then we similarly get
$$
{\cal A}_{\mu_n}(P_{{\cal N}}:X_1\rightarrow X_2)\ge
d_{\mu_n}(P_{{\cal N}}:X_1\rightarrow
X_2)\overset{n}{\gtrsim}$$$$\gtrsim\max\left\{n^{-\frac{\delta}{d}}d_{\mu_n}(B^{\nu_n}_{p_1},
\, l^{\nu_n}_{p_2}), \,
n^{-\alpha}\rho(n)d_{\mu_n}(B^{\nu_n}_{q_1}, \,
l^{\nu_n}_{q_2})\right\}\overset{n}{\asymp}\max\left\{n^{-\frac{\delta}{d}},
\, n^{-\alpha}\rho(n)\right\}.
$$
The same lower estimate holds for ${\cal A}_{\mu_n}(P_{{\cal
N}}:X_1\rightarrow X_2)$, if $p_1\ge 2$, $q_1\ge 2$.
\end{proof}

{\bf Acknowledgements}

In conclusion, the author expresses her sincere gratitude to I.G.
Tsar’kov, A.S. Kochurov, V.D. Stepanov and E.M. Galeev for the
discussion of results and reading the manuscript. The research was
supported by RFBR under grant no. 10-01-00442.

\begin{Biblio}

\bibitem{tr1} H. Triebel, {\it Interpolation Theory, Function Spaces, Differential Operators}
(North-Holland, Amsterdam, 1978).

\bibitem{tr2} H. Triebel, {\it Theory of Function Spaces} (Birkh\"{a}user, Basel,
1983).

\bibitem{tr3} H. Triebel, {\it Theory of Function Spaces II} (Birkh\"{a}user, Basel,
1992).

\bibitem{tr4} H. Triebel, {\it Fractals and Spectra} (Birkh\"{a}user, Basel, 1997).

\bibitem{tr5} H. Triebel, {\it Theory of Function Spaces III} (Birkh\"{a}user, Basel, 2006).

\bibitem{edm_tr} D.E. Edmunds and H. Triebel, {\it Function spaces, entropy numbers,
differential operators}. Cambridge Univ. Press, Cambridge, 1996.

\bibitem{har_envel} D.D. Haroske, {\it Envelopes and sharp embeddings of function
spaces}. Volume 437 of Chapman \& Hall/CRC Research Notes in
Mathematics. Chapman \& Hall/CRC, Boca Raton, FL, 2007.

\bibitem{bow1} M. Bownik, ``Atomic and molecular decompositions of anisotropic Besov
spaces'', {\it Math. Z.}, {\bf 250}:3 (2005),  539--571.

\bibitem{bui} H.-Q. Bui, ``Weighted Besov and Triebel spaces: Interpolation by the real method'',
{\it Hiroshima Math. J.}, {\bf 12}:3 (1982), 581--605.

\bibitem{fr_roud} M. Frazier, S. Roudenko, ``Matrix-weighted Besov spaces and conditions
of ${\cal A}_p$ type for $0<p\le 1$'', {\it Indiana Univ. Math.
J.}, {\bf 53}:5 (2004), 1225--1254.

\bibitem{har_piot} D.D. Haroske, I. Piotrowska, ``Atomic decompositions of function spaces with Muckenhoupt
weights, and some relation to fractal analysis'', {\it Math.
Nachr.}, {\bf 281}:10 (2008), 1476--1494.

\bibitem{har_schn} D.D. Haroske, C. Schneider, ``Besov spaces with positive smoothness on $\R^n$,
embeddings and growth envelopes'', {\it J.  Approx. Theory}, {\bf
161} (2009), 723–747.

\bibitem{haroske} D.D. Haroske, L. Skrzypczak, ``Entropy and approximation
numbers of function spaces with Muckenhoupt weights'', {\it Rev.
Mat. Complut.}, {\bf 21}:1 (2008), 135--177.

\bibitem{heinr} S. Heinrich,
``On the relation between linear n-widths and approximation
numbers'', {\it J. Approx. Theory}, {\bf 58}:3 (1989), 315–333.

\bibitem{bibl6} V.M. Tikhomirov, ``Diameters of Sets in Functional Spaces
and the Theory of Best Approximations'', {\it Russian Math. Surveys},
{\bf 15}:3 (1960), 75--111.

\bibitem{tikh_babaj} V.M. Tikhomirov and S.B. Babadzanov, ``Diameters of a
Function Class in an $L^p$-space $(p\ge 1)$'', {\it Izv. Akad. Nauk UzSSR, Ser.
Fiz. Mat. Nauk}, {\bf 11}(2) (1967), 24--30 (in Russian).

\bibitem{busl_tikh} A.P. Buslaev and V.M. Tikhomirov,
``The Spectra of Nonlinear Differential Equations and Widths of
Sobolev Classes'', {\it Math. USSR-Sb.}, {\bf 71}:2 (1992),
427--446.

\bibitem{bib_ismag} R.S. Ismagilov, ``Diameters of Sets in Normed Linear Spaces,
and the Approximation of Functions by Trigonometric Polynomials'',
{\it Russ. Math. Surv.}, {\bf 29}:3 (1974), 169--186.

\bibitem{bib_kashin} B.S. Kashin, ``The Widths of Certain Finite-Dimensional
Sets and Classes of Smooth Functions'', {\it Math. USSR-Izv.},
{\bf 11}:2 (1977), 317–333.

\bibitem{bib_majorov} V.E. Maiorov, ``Discretization of the Problem of Diameters'',
{\it Uspekhi Mat. Nauk}, {\bf 30}:6 (1975), 179--180.

\bibitem{bib_makovoz} Yu.I. Makovoz, ``A Certain Method of Obtaining
Lower Estimates for Diameters of Sets in Banach
Spaces'', {\it Math. USSR-Sb.}, {\bf 16}:1
(1972), 139--146.

\bibitem{bibl9} V.N. Temlyakov,  ``Approximation of Periodic Functions
of Several Variables With Bounded Mixed Derivative'', {\it Dokl. Akad. Nauk SSSR},
{\it 253}:3 (1980), 544--548.
\bibitem{bibl10} V.N. Temlyakov,  ``Diameters of Some Classes of Functions
of Several Variables'', {\it Dokl. Akad. Nauk SSSR}, {\bf 267}:3 (1982), 314--317.
\bibitem{bibl11} V.N. Temlyakov,  ``Approximation of Functions With Bounded Mixed Difference by Trigonometric
Polynomials, and Diameters of Certain Classes of Functions'', {\it
Math. USSR-Izv.}, {\bf 20}:1 (1983), 173–187.
\bibitem{bibl12} E.M. Galeev, ``Approximation of Certain Classes of Periodic Functions of Several Variables by Fourier
Sums in the $\widetilde L_p$ Metric'', {\it Uspekhi Mat. Nauk}, {\bf 32}:4 (1977), 251--252
(in Russian).
\bibitem{bibl13} E.M. Galeev, ``The Approximation of Classes of Functions
With Several Bounded Derivatives by
Fourier Sums'', {\it Math. Notes}, {\bf
23}:2 (1978), 109--117.
\bibitem{kashin1} B.S. Kashin, ``Widths of Sobolev Classes of Small-Order Smoothness'',
{\it Moscow Univ. Math. Bull.}, {\bf 36}:5 (1981), 62--66.
\bibitem{kulanin} E.D. Kulanin, {\it Estimates for Diameters of Sobolev Classes of
Small-Order Smoothness}. Thesis. Candidate
Fiz.-Math. Sciences (MGU, Moscow, 1986) (in Russian).
\bibitem{tikh_nvtp} V.M. Tikhomirov, {\it Some Questions in Approximation Theory}.
(Izdat. Moskov. Univ., Moscow, 1976) (in Russian).

\bibitem{itogi_nt} V.M. Tikhomirov, ``Approximation Theory''. In: {\it Current problems in
mathematics. Fundamental directions.}
vol. 14. ({\it Itogi Nauki i Tekhniki}) (Akad. Nauk SSSR, Vsesoyuz. Inst. Nauchn. i Tekhn. Inform.,
Moscow, 1987), pp. 103–260 (in Russian).

\bibitem{kniga_pinkusa} A. Pinkus, {\it $n$-widths in approximation theory.} Berlin: Springer, 1985.
\bibitem{pietsch1} A. Pietsch, ``$s$-numbers of operators in Banach space'', {\it Studia Math.},
{\bf 51} (1974), 201--223.

\bibitem{stesin} M.I. Stesin, ``Aleksandrov Diameters of Finite-Dimensional Sets
and of Classes of Smooth Functions'', {\it Dokl. Akad. Nauk SSSR}, {\bf 220}:6 (1975),
1278--1281 (in Russian).

\bibitem{gluskin1} E.D. Gluskin, ``On some finite-dimensional problems of width theory'',
{\it Physis—Riv. Internaz. Storia Sci.}  23  (1981), no. 2, 5–10,
124 (in Russian).

\bibitem{bib_gluskin} E.D. Gluskin, ``Norms of Random Matrices and Diameters
of Finite-Dimensional Sets'', {\it Math. USSR-Sb.}, {\bf 48}:1
(1984), 173--182.
\bibitem{garn_glus} A.Yu. Garnaev and E.D. Gluskin, ``The Widths of a Euclidean Ball'',
{\it Soviet Math. Dokl.}, {\bf 30}:1 (1984), 200–204.

\bibitem{myn_otel} K. Mynbaev, M. Otelbaev, {\it Weighted function spaces and the
spectrum of differential operators}. Nauka, Moscow, 1988.

\bibitem{har_tri} D.D. Haroske, H. Triebel, ``Entropy numbers in weighted function spaces and eigenvalue
distribution of some degenerate pseudodifferential operators I'',
{\it Math. Nachr.}, {\bf 167} (1994), 131–156.

\bibitem{har_tr_wav} D.D. Haroske, H. Triebel, ``Wavelet bases and entropy numbers in weighted function
spaces'', {\it Math. Nachr.}, 278:(1--2) (2005), 108–132.

\bibitem{skr} L. Skrzypczak, ``On approximation numbers of Sobolev embeddings
of weighted function spaces'', {\it J. Approx. Theory}, {\bf 136}
(2005), 91–107.

\bibitem{kuhn1} Th. K\"{u}hn, ``Entropy numbers of general diagonal operators'', {\it Rev. Mat. Complut.},
{\bf 18}:2 (2005), 479–491.

\bibitem{kuhn2} Th. K\"{u}hn, ``Entropy numbers in sequence spaces with an application to weighted function
spaces'', {\it J. Approx. Theory}, {\bf 153}:1 (2008), 40–52.

\bibitem{kuhn3} Th. K\"{u}hn, H.-G. Leopold, W. Sickel, and L. Skrzypczak. ``Entropy numbers of embeddings
of weighted Besov spaces''. Jenaer Schriften zur Mathematik und
Informatik Math/Inf/13/03, p. 1--57, Universit\"{a}t Jena,
Germany, 2003.

\bibitem{kuhn4} Th. K\"{u}hn, H.-G. Leopold, W. Sickel, and L. Skrzypczak. ``Entropy numbers of embeddings
of weighted Besov spaces'', {\it Constr. Approx.}, {\bf 23}
(2006), 61–77.

\bibitem{kuhn_leopold} Th. K\"{u}hn, H.-G. Leopold, W. Sickel, L.
Skrzypczak, ``Entropy numbers of embeddings of weighted Besov
spaces II'', {\it Proc. Edinburgh Math. Soc.} (2) {\bf 49} (2006),
331--359.

\bibitem{kuhn5} Th. K\"{u}hn, H.-G. Leopold, W. Sickel, and L. Skrzypczak, ``Entropy numbers of embeddings
of weighted Besov spaces III. Weights of logarithmic type'', {\it
Math. Z.}, {\bf 255}:1 (2007), 1–15.

\bibitem{haroske2} D.D. Haroske, L. Skrzypczak, ``Entropy and approximation numbers of embeddings of
function spaces with Muckenhoupt weights, II.  General weights'', {\it Ann. Acad. Sci. Fenn. Math.},
{\bf 36}:1 (2011), 111–138.

\bibitem{haroske3} D.D. Haroske, L. Skrzypczak, ``Entropy numbers of embeddings of
function spaces with Muckenhoupt weights, III. Some limiting
cases,'' {\it J. Funct. Spaces Appl.} {\bf 9}:2 (2011), 129–178.

\bibitem{caetano} A.M. Caetano, ``About approximation numbers in function spaces'', {\it J. Approx.
Theory} {\bf 94} (1998), 383--395.

\bibitem{gasior} A. G\c{a}siorowska, L. Skrzypczak, ``Some $s$-numbers
of embeddings of function spaces with weights of logarithmic
type'', {\it Math. Nachr.}, 1–15 (2012) / DOI 10.1002/mana.201100086.

\bibitem{zhang1} Shun Zhang, Gensun Fang, ``Gelfand and Kolmogorov numbers of Sobolev embeddings
of weighted function spaces'', {\it J. Compl.}, {\bf 28} (2012),
209--223.

\bibitem{zhang2} Shun Zhang, Gensun Fang, Fanglun Huang, ``Gelfand and Kolmogorov numbers
of Sobolev embeddings of weighted function spaces. II'',
arXiv:1105.5499v3.

\bibitem{vybiral} J. Vybiral, ``Widths of embeddings in function spaces'', {\it Journal of
Complexity}, {\bf 24} (2008), 545--570.

\bibitem{kuhn_leop2} Th. K\"{u}hn, H.-G. Leopold, W. Sickel, and L. Skrzypczak,
``Entropy numbers of Sobolev embeddings of radial Besov spaces'',
{\it J. Approx. Theory} {\bf 121} (2003), 244--268.

\bibitem{gasior1} A. G\c{a}siorowska, ``Gelfand and Kolmogorov numbers
of embedding of radial Besov and Sobolev spaces'', {\it Function spaces IX}, 91–106,
Banach Center Publ., 92, Polish Acad. Sci. Inst. Math., Warsaw, 2011.

\bibitem{vas_alg_an} A.A. Vasil'eva, ``Kolmogorov widths
and approximation numbers of Sobolev classes with singular
weights'', {\it Algebra i Analiz}, {\bf 24}:1 (2012), 3--39.

\bibitem{gal_izv} E.M. Galeev, ``Kolmogorov diameters of classes of periodic
functions of one and several variables.'', {\it Math. USSR-Izv.},
{\bf 36}:2 (1991), 435–448.

\bibitem{isaak} A.D. Izaak,``Kolmogorov widths in finite-dimensional
spaces with mixed norm'', {\it Math. Notes}, {\bf 55}:1--2 (1994),
30–36.

\bibitem{gal_mix} E.M. Galeev, ``Kolmogorov widths of some finite-dimensional
sets in a mixed norm.'', {\it Math. Notes}, {\bf 58}:1--2 (1996),
774–778.

\end{Biblio}
\end{document}